\newcommand{\CRig}{\catname{CRig}}
\newcommand{\pvto}[2]{{\kern1pt\left(\begin{matrix}\text{\small $#1$}\\[-5pt] \scriptstyle\downarrow\\[-3pt] \text{\small $#2$}\end{matrix}\right)}}
\newcommand{\vto}[2]{{\begin{pmatrix}\text{\footnotesize $#1$}\\[-4.5pt] \scriptstyle\downarrow\\[-2.5pt] \text{\footnotesize $#2$}\end{pmatrix}}}
\renewcommand{\FunLSM}{\Fun^{\textup{lax-$\otimes$}}}
\begin{document}
{
\title{Homotopical commutative rings and bispans}
\date{\today}

\author{Bastiaan Cnossen,\textsuperscript{1} Rune Haugseng,\textsuperscript{2} Tobias Lenz,\textsuperscript{3} and Sil Linskens\textsuperscript{1}}

\maketitle

\footnotetext[1]{Mathematisches Institut, Universität Regensburg, Universitätsstraße 31, 93053 Regensburg, Germany}
\footnotetext[2]{Institutt for matematiske fag, Norges teknisk-naturvitenskapelige universitet, Alfred Getz’ vei 1, 7034 Trondheim, Norway}
\footnotetext[3]{Mathematical Institute, Universiteit Utrecht, Budapestlaan 6, 3584 CD Utrecht, The Netherlands \& Mathematisches Institut, Rheinische Friedrich-Wilhelms-Universität Bonn, Endenicher Allee 60, 53115 Bonn, Germany (\textit{current address})}

\begin{abstract}
  We prove that commutative semirings in a cartesian closed
  presentable $\infty$-category, as defined by Groth, Gepner, and
  Nikolaus, are equivalent to product-preserving functors from the
  $(2,1)$-category of bispans of finite sets. In other words, we
  identify the latter as the Lawvere theory for commutative semirings
  in the $\infty$-categorical context. This implies that connective
  commutative ring spectra can be described as grouplike
  product-preserving functors from bispans of finite sets to spaces. A
  key part of the proof is a localization result for
  $\infty$-categories of spans, and more generally for
  $\infty$-categories with factorization systems, that may be of
  independent interest.
\end{abstract}

\renewcommand{\thefootnote}{}
\footnotetext{2020 Mathematics Subject Classification: 18N70, 18N55}}

\begingroup\tosfstyle
\let\oldcontentsline=\contentsline
\def\contentsline#1#2#3#4{\oldcontentsline{#1}{#2}{\texttosf{#3}}{#4}}
\tableofcontents
\endgroup

\section{Introduction}

The goal of this paper is to give an explicit description of commutative semirings in the $\infty$-categorical setting in terms of a certain (2,1)-category $\Bispan(\xF)$ of \textit{bispans of finite sets}:

\begin{introthm}\label{mainthm}
	Let $\cC$ be a cartesian closed presentable \icat{}. Then there is a natural equivalence
	\[ \CRig(\cC) \simeq \Fun^{\times}(\Bispan(\xF), \cC)\]
	between the $\infty$-category of commutative semirings in $\cC$ and the $\infty$-category of product-preserving functors $\Bispan(\xF) \to \cC$.
\end{introthm}

Before properly introducing both sides of the equivalence, let us motivate this result by first discussing the analogous statement for \textit{commutative monoids}.

\subsubsection*{Commutative monoids and spans}
Given a category\footnote{In this paper we will call categories \emph{categories} and \icats{} \emph{\icats{}}. We hope this does not cause too much confusion for younger readers.} $\cC$ with finite products, there is a category $\CMon(\cC)$ of \textit{commutative monoids in $\cC$}, which consists of objects of $\cC$ equipped with a binary operation that is unital, associative and commutative. The theory of commutative monoids is algebraic in nature, in the sense that it can be described by a \textit{Lawvere theory}: the data of a commutative monoid in $\cC$ is the same as that of a product-preserving functor $L \to \Set$, where $L = \{\mathbb{N}^n \mid n \geq 0\}^{\mathrm{op}}$ is the opposite of the category of free commutative monoids on finitely many generators.

The Lawvere theory $L$ for commutative monoids admits an explicit description as the category $\hSpan(\xF)$ of spans of finite sets. Recall that the objects of this category are finite sets, while the set of morphisms in $\hSpan(\xF)$ from $S$ to $T$ is given as the set of isomorphism classes of \textit{spans} (or \textit{correspondences})
\[
\begin{tikzcd}[row sep=small, column sep=small]
	{} & X \ar[dl] \ar[dr] \\
	S & & T
\end{tikzcd}
\]
in finite sets. The identity maps are given by taking $S = X = T$, and composition is defined via pullback: given two composable spans $S \leftarrow X \to T$ and $T \leftarrow Y \to U$, their composite is given by the outer (dashed) span in the following pullback diagram:
\[
\begin{tikzcd}[row sep=small, column sep=tiny]
	{} &[0.9em] {} & X \times_{T} Y \ar[dl] \ar[dr] \ar[ddll, bend right, dashed]
	\ar[ddrr, bend left, dashed] \arrow[phantom]{dd}[very near start]{\rotatebox{-45}{$\lrcorner$}} \\
	{} & X \ar[dl] \ar[dr] & & Y\ar[dl] \ar[dr] \\
	S & & T & &[0.9em] U.
\end{tikzcd}
\]
One can show that the category $\hSpan(\xF)$ admits finite products given by taking disjoint unions of sets.

The fact that $L$ is equivalent to $\hSpan(\xF)$ means that for every category $\cC$ with finite products there is a natural equivalence of categories
\[ \CMon(\cC) \simeq \Fun^{\times}(\hSpan(\xF), \cC),\]
where the right-hand side denotes the category of product-preserving functors from $\hSpan(\xF)$ to $\cC$. Explicitly, a commutative monoid $M$ in $\cC$ corresponds under this equivalence to the functor $M^{(-)}\colon \hSpan(\xF) \to \cC$ given as follows:
\begin{itemize}
	\item On objects, $M^{(-)}$ sends a finite set $S$ to the $S$-indexed product $M^{S} \cong M^{\times |S|}$;
	\item On morphisms, $M^{(-)}$ sends a span $S \xfrom{f} X \xto{g} T$ to the composite
	\[ M^{S} \xto{f^{*}} M^{X} \xto{g_{\oplus}} M^{T},\]
	where we define $f^*$ by $(f^*\psi)_x = \psi_{f(s)}$ for $\psi \in M^S$ and $x \in X$, and we define $g_{\oplus}$ by summing over the fibers of $g$: for $\phi \in M^X$ and $t \in T$ we set
	\[ (g_{\oplus}\phi)_t = \sum_{x \in g^{-1}(t)} \phi_x.\]
\end{itemize}

It turns out that the above description of commutative monoids carries over to the $\infty$-categorical setting, provided that we work with the $(2,1)$-category $\Span(\xF)$ in which we include isomorphisms of spans instead of taking
isomorphism classes. In other words, for any \icat{} $\cC$ with finite products there is a natural equivalence of $\infty$-categories
\[ \CMon(\cC) \simeq \Fun^{\times}(\Span(\xF), \cC).\]
This seems to have been first proved in the thesis of Cranch~\cite{CranchSpan}.

\subsubsection*{Commutative semirings and bispans}
In this paper we are interested in a similar description for the Lawvere theory for commutative \textit{semirings}. Recall that a commutative semiring in a category with finite products is an object $R$ equipped with two unital, associative and commutative operations, called \textit{addition} and \textit{multiplication}, satisfying the property that multiplication distributes over addition. The Lawvere theory of commutative semirings is given by the category $\hBispan(\xF)$ of \emph{bispans} of finite sets: the objects are still finite sets, but now the set of morphisms from $S$ to $T$ is given by the set of isomorphism classes of \emph{bispans} (or \emph{polynomial diagrams})
\[
\begin{tikzcd}
	{} & E \ar[r, "p"] \ar[dl, "s"{swap}] & B \ar[dr, "t"] \\
	S & & & T.
\end{tikzcd}
\]
The fact that there are now \textit{two} maps pointing to the right reflects the fact that we need to encode both the addition and the multiplication on $R$. Since these two operations interact via a distributivity relation, the composition rule in $\hBispan(\xF)$ is necessarily somewhat non-trivial to
describe. Given a bispan from $S$ to $T$ and bispan from $T$ to $U$, as displayed at the bottom of the following diagram, their composite is given by the outer (dashed) bispan in the diagram:
\begin{equation}
	\label{eq:bispancomp}
	\begin{tikzcd}[column sep=small]
		{} &   &   & G \arrow[dashed, bend left=30]{rrr} \arrow[dashed, bend
		right=30]{dddlll} \arrow{rr}[swap]{p''} \arrow{dl}{\epsilon'}
		\drpullback &
		& X \arrow[phantom]{ddr}[very near start,description]{\lrcorner}\arrow{r}[swap]{\tilde{q}} \arrow{dl}{\epsilon}
		\arrow{dd}{q^{*}q_{*}\pi}&[2.5em] D \arrow{dd}{q_{*}\pi} \arrow[dashed,bend left=15]{dddr} \\
		{} &   & Y \drpullback \arrow{rr}{p'} \arrow{dl}{u''} &   &  B \times_{J} F \arrow[phantom]{dd}[very near start]{\rotatebox{-45}{$\lrcorner$}} \arrow{dl}{u'} \arrow{dr}{\pi} \\
		{} & E\arrow[swap]{rr}{p} \arrow{dl}{s} &   & B \arrow[swap]{dr}{t} &     { }           &
		F \arrow{dl}{u}  \arrow[swap]{r}{q} & C \arrow[swap]{dr}{v} \\
		S  &   &   &   & T              &   &   & U.
	\end{tikzcd}
\end{equation}
Here four of the squares are pullbacks, as indicated, and $q_{*}$ is
the right adjoint to basechange along $q$, so that for
$f \colon K \to F$, the function $q_{*}f$ has fibres
$(q_{*}f)_{c} \cong \prod_{x \in q^{-1}(c)} K_{x}$; the map $\epsilon$
is the counit map $q^{*}q_{*} \to \id$ for the adjunction $q^{*} \dashv q_{*}$.

The statement that $\hBispan(\xF)$ is the Lawvere theory for
commutative semirings amounts to having, for any category $\cC$ with
finite products, a natural equivalence
\begin{equation}
	\label{eq:crigbispancat}
	\CRig(\cC) \simeq \Fun^{\times}(\hBispan(\xF), \cC)
\end{equation}
between the category of commutative semirings in $\cC$ and product-preserving functors from $\hBispan(\xF)$ to $\cC$.
We are unsure of where this statement was first proved, but it is discussed in Strickland's notes on Tambara functors \cite{StricklandTambara}*{\S 5}, and a proof appears in the work of Gambino and Kock on polynomial functors \cite{GambinoKock}.

Again, this equivalence can be easily described on objects; a commutative semiring $R$ in $\cC$ gets sent to the functor $R^{(-)}\colon\hBispan(\xF)\to\cC$ defined as follows:
\begin{itemize}
  \item On objects, $R^{(-)}$ sends a finite set $S$ to the $S$-indexed product $R^S$ as before.
  \item A morphism $S\xleftarrow{f} X\xrightarrow{g}Y\xrightarrow{h}T$ in $\hBispan(\xF)$ is sent to the composite
  \[
    R^S\xrightarrow{f^*} R^X\xrightarrow{g_\otimes}R^Y\xrightarrow{h_\oplus}R^T,
  \]
  where $f^*$ and $h_\oplus$ are given by restriction and fiberwise addition as before, while $g_\otimes$ is now given by fiberwise multiplication: if $\phi\in R^X$ is arbitrary, then
  \[
    g_\otimes(\phi)_y = \prod_{x\in g^{-1}(y)} \phi_x.
  \]
\end{itemize}

\subsubsection*{Commutative semirings in $\bm\infty$-categories}
The goal of this paper is to show that the description of the 1-categorical Lawvere theory of commutative semirings in terms of bispans generalizes to the $\infty$-categorical setting. For the definition of commutative semirings in this setting, we recall that Gepner, Groth, and
Nikolaus~\cite{GepnerGrothNikolaus} proved that if $\cC$ is a
presentable \icat{} equipped with a closed symmetric monoidal
structure, then the \icat{} $\CMon(\cC)$ of commutative monoids in
$\cC$ admits a unique closed symmetric monoidal structure such that the
free commutative monoid functor from $\cC$ to $\CMon(\cC)$ is symmetric
monoidal. We then define commutative semirings in $\cC$ as commutative
algebras with respect to this symmetric monoidal structure:
\[ \CRig(\cC):= \CAlg(\CMon(\cC), \otimes).\]

The Lawvere theory of commutative semirings will be a higher
category of bispans of finite sets, where we no longer take
isomorphism classes of bispans as above. To define this as an \icat{},
we make use of an observation due to Street~\cite{StreetPoly}: The
composition of \textit{bispans} can be described in terms of pullbacks in the category of \textit{spans}. This has been verified in the \icatl{} setting by
Elmanto and the second author \cite{bispans}, so that we can define
the $(2,1)$-category $\Bispan(\xF)$ as the $(2,1)$-category of spans in $\Span(\xF)$, where the
forward maps have no backward component.

Now that both sides have been defined, we recall the statement of our Theorem \ref{mainthm}: for every cartesian closed presentable $\infty$-category $\cC$, viewed as a symmetric monoidal category via the cartesian product, there is a natural equivalence
\[
\CRig(\cC) \simeq \Fun^{\times}(\Bispan(\xF), \cC).
\]
In other words we exhibit $\Bispan(\xF)$ as the Lawvere theory for commutative semirings in \icats{}. That this should be the case was suggested briefly at the end of \cite{BermanLawvere}; it was also proposed as a definition of commutative semirings in the thesis of Cranch~\cite{CranchThesis}, which contains the first (and rather different) construction of $\Bispan(\xF)$ as a
quasicategory.

Gepner--Groth--Nikolaus also show that the symmetric monoidal
structure on $\CMon(\cC)$ localizes to the full subcategory
$\CGrp(\cC) \subseteq \CMon(\cC)$ of \emph{grouplike} commutative
monoids, and that this is moreover compatible with the natural
symmetric monoidal structure on the stabilization of $\cC$. When
$\cC$ is the \icat{} $\Spc$ of spaces, this enhances the recognition
principle for infinite loop spaces of May~\cite{MayGeomIter} and
Boardman--Vogt~\cite{BoardmanVogt} to a symmetric monoidal equivalence
between $\CGrp(\Spc)$ and the \icat{} $\Sp^{\geq 0}$ of connective
spectra. Combining this with our result, we get a rather concrete
description of connective commutative ring spectra:
\begin{introcor}
	There is an equivalence
	\[ \CAlg(\Sp^{\geq 0}) \simeq \Fun^{\times}(\Bispan(\xF),
	\Spc)_{\mathrm{grp}} \] between connective commutative ring
	spectra and product-preserving functors $\Bispan(\xF) \to \Spc$
	whose underlying commutative monoid is grouplike.
\end{introcor}

\subsubsection*{Overview}
Let us outline our strategy for the proof of Theorem~\ref{mainthm}. First, we derive a more explicit description of $\CRig(\mathcal{C})$:
\begin{itemize}
	\item Recall that $\CMon(\cC)$ is equivalent to the full subcategory of
	$\Fun(\Span(\xF), \cC)$ spanned by the product-preserving functors. We identify the symmetric monoidal
	structure of \cite{GepnerGrothNikolaus} as a localization of the Day convolution structure arising from a tensor product on
	$\Span(\xF)$ given by the cartesian product of finite sets. This is essentially a result of Ben-Moshe and Schlank~\cite{BenMosheSchlank} (though they work in a slightly different setting).
	\item Using the universal property of Day convolution, this means that we can identify $\CRig(\cC)$ with a full subcategory of the
	\icat{} of lax symmetric monoidal functors
	$(\Span(\xF), \otimes) \to (\cC, \times)$.
	\item Using the universal property of the cartesian symmetric monoidal
	structure, we can identify this in turn as a full subcategory of the functors
	\[ \Span(\xF)^{\otimes} \to \cC, \] where $\Span(\xF)^{\otimes}$ is
	the total space of the cocartesian fibration that encodes the
	symmetric monoidal structure on $\Span(\xF)$.
\end{itemize}
We will prove this in \S\ref{sec:crig} (culminating with Corollary~\ref{cor:crigspanfot}), where we also give an explicit identification of the \icat{} $\Span(\xF)^{\otimes}$. Surprisingly, it ends up being another \icat{} of bispans, now in $\Ar(\xF)$ (Corollary~\ref{cor:spanFotimes}).

From this description of $\Span(\xF)^{\otimes}$ we obtain an evident
functor to $\Bispan(\xF)$. We are left with proving that this functor
is a localization, as well as showing that under this localization the
\icat{} of product-preserving functors from $\Bispan(\xF)$ corresponds
precisely to that of commutative semirings.  This is the content of
\S\ref{sec:comparison}. We deduce the first statement as a special case of a general recognition theorem for localizations which we believe may be of independent interest:

\begin{introthm}\label{introthm:sep-var}
	Let $f\colon\cC\to\cC'$ be a functor, and assume we have equipped $\cC$ and $\cC'$ with factorization systems $(E,M)$ and $(E',M')$, respectively, such that
	\begin{enumerate}
		\item $f$ restricts to a localization $E\to E'$ at some class $W\subset E$, and
		\item $f$ restricts to a right fibration $M\to M'$.
	\end{enumerate}
	Then $f\colon \cC\to \cC'$ is also a localization at $W$.
\end{introthm}

\subsubsection*{Generalizations}
In the follow-up work \cite{CHLL2}, we generalize the main theorem of this paper to the more general setting of \textit{parametrized higher algebra}. For simplicity, let us merely explain the statement in the $G$-equivariant setting for a finite group $G$. The generalization in this case involves replacing commutative monoids with \textit{$G$-normed monoids} (product-preserving functors $\Span(\xF_G) \to \cC$) and, correspondingly, symmetric monoidal $\infty$-categories with \textit{normed $G$-$\infty$-categories} ($G$-normed monoids in $\Cat_{\infty}$). Under the assumption that $\cC$ has colimits preserved by the cartesian product in both variables, the $\infty$-category of $G$-normed monoids in $\cC$ enhances to a normed $G$-$\infty$-category $\ul{\NMon}_G(\cC)$. This leads to the notion of \textit{$G$-normed semirings} in $\cC$,
\[
	\NRig_G(\cC) := \NAlg_G(\ul{\NMon}_G(\cC)),
\]
and the same strategy as Theorem~\ref{mainthm} shows an equivalence
\[
	\NRig_G(\cC) \simeq \Fun^\times(\Bispan(\xF_G),\cC),
\]
where $\Bispan(\xF_G)$ is the (2,1)-category of bispans of finite $G$-sets.

      Product-preserving functors from $\Bispan(\xF_G)$ to sets that
      are also grouplike in an appropriate sense are one definition
      of \emph{Tambara functors}, see e.g.\
      \cite{StricklandTambara}*{Definition 6.2}. Adding the same
      grouplike condition, we can therefore interpret this result as
      saying that the $\infty$-category of $G$-commutative rings in
      $\cC$ is equivalent to the $\infty$-category of
      $\cC$-valued $G$-Tambara functors. In particular, we
      obtain a description of connective $G$-commutative ring spectra
      as $G$-Tambara functors valued in spaces,
      \[ \NAlg_{G}(\smash{\ul{\Sp}}_G^{\geq 0}) \simeq \Fun^{\times}(\Bispan(\xF_{G}), \Spc)_{\mathrm{grp}}.\]

This can be seen as a multiplicative refinement of the comparison between (connective) genuine $G$-spectra and spectral $G$-Mackey functors (resp.~grouplike $G$-Mackey functors valued in spaces) from \cite{cmmn-Mackey}*{Theorem A.1}.

Although analogous to the main result of this paper, establishing this generalization requires substantial foundational work in parametrized higher category theory. We therefore defer its detailed treatment to the follow-up work \cite{CHLL2}.

\subsection*{Notation}
\begin{itemize}
	\item We write $\xF$ for the category of finite sets, and
	\[ \bfu{n} := \{1,\dots,n\}\]
	for a set with $n$ elements.
	\item We write $\CatI$ for the \icat{} of \icats{} and $\Spc$ for the
	\icat{} of spaces or \igpds{}.
	\item If $\cC$ is an \icat{}, then we usually denote its underlying \igpd{} by $\cC^{\simeq}$; in a few instances it will instead by notationally convenient to denote it by $\cC_{\eq}$, however.
	\item We denote generic \icats{} as $\cA, \cB, \cC,\dots$.
	\item We denote the arrow \icat{} of $\cC$ as $\Ar(\cC) := \Fun([1],
	\cC)$.
	\item A subcategory $\cC_{0}$ of an \icat{} $\cC$ is a functor
	$i \colon \cC_{0} \to \cC$ such that
	$\cC_{0}^{\simeq} \to \cC^{\simeq}$ and
	$\Map_{\cC_{0}}(x,y) \to \Map_{\cC}(i(x),i(y))$ for all $x,y$ are
	all monomorphisms of \igpds{}. In other words, subcategories are by
	definition always ``replete'', meaning that they must contain all
	equivalences among their objects. A subcategory is \emph{wide} if it
	contains all objects, or equivalently if
	$\cC_{0}^{\simeq} \to \cC^{\simeq}$ is an equivalence.
\end{itemize}

\subsection*{Acknowledgments}
A key idea for this paper arose in a discussion between Fabian
Hebestreit, Irakli Patchkoria and the second author during a visit to
the University of Aberdeen.

The authors would like to thank the anonymous referee for their detailed feedback.

The first author is an associate member of the SFB 1085 ‘Higher Invariants’ at the University of Regensburg, funded by the DFG. While the first version of this article was written, the fourth author was an associate member of the Hausdorff Center for Mathematics, supported by the DFG Schwer\-punktprogramm 1786 `Homotopy Theory and Algebraic Geometry' (project ID SCHW 860/1-1). On the other hand, during the time this article was revised, the third author was an associate member of the Hausdorff Center for Mathematics at the
University of Bonn (DFG GZ 2047/1, project ID 390685813).

\section{Background}
In this section we recall some background material: We review spans in
\S\ref{sec:spans} and bispans in \S\ref{sec:bispans}, and then look
briefly at symmetric monoidal \icats{} in general in
\S\ref{sec:symmon} and the special cases of (co)cartesian symmetric
monoidal structures in \S\ref{sec:cart}.

\subsection{Spans}\label{sec:spans}
In this subsection we briefly recall some definitions and results
relating to \icats{} of spans. These were originally constructed in
\cite{BarwickMackey} using quasicategories, though our primary
reference will be the model-independent reworking in \cite{HHLN2}.

\begin{defn}
  A \emph{span pair} $(\cC, \cC_{F})$
  consists of an \icat{} $\cC$ together with a wide
  subcategory $\cC_{F}$, whose
  morphisms we call the \emph{forward}
  morphisms, such that:
  \begin{enumerate}[(1)]
  \item for any forward morphism $f \colon x \to y$ and any
    morphism $g \colon z \to y$ there exists a pullback square
    \[
      \begin{tikzcd}
        w \ar[r, "f'"] \ar[d, "g'"'] & z \ar[d, "g"] \\
        x \ar[r, "f"'] & y,
      \end{tikzcd}
    \]
    in $\cC$,
  \item and in this pullback square the morphism $f'$ again lies in
    $\cC_{F}$.
  \end{enumerate}
  We write $\SPair$ for the \icat{} of span pairs; a morphism
  $(\cC, \cC_{F}) \to (\cD, \cD_{F})$ here is a
  functor $\cC \to \cD$ that preserves the forward maps
  as well as pullbacks along forward maps.

  In more detail, $\SPair$ is defined as the full subcategory of $\Fun([1],\CatI)$ spanned by the inclusions $\cC_F\hookrightarrow\cC$ for span pairs $(\cC,\cC_F)$. As each such functor is in particular a monomorphism, the evaluation map $\ev_{1}\colon\Fun([1],\CatI)\to\CatI$ is again a monomorphism, i.e.~a map between span pairs $(\cC, \cC_{F}) \to (\cD, \cD_{F})$ is indeed completely described by the functor $\cC\to\cD$ as claimed above, also cf.~\cite{HHLN2}*{2.1}.
\end{defn}

\begin{remark}
  The setup in \cite{BarwickMackey} and \cite{HHLN2} is a bit more
  general than this, and uses instead \emph{adequate
    triples} $(\cC, \cC_{F}, \cC_{B})$ where we specify subcategories
  of both forward and backward maps. Here we will only consider span
  pairs, as this simplifies the notation and encompasses all the
  examples we will encounter in this paper.
\end{remark}

\begin{observation}\label{obs:adtriplim}
  The \icat{} $\SPair$ has limits and filtered colimits, which are both computed in
  $\CatI$, by \cite{HHLN2}*{2.4}.
\end{observation}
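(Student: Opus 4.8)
The plan is to exhibit $\SPair$ as a subcategory of the arrow \icat{} $\Ar(\CatI) = \Fun([1], \CatI)$ via the forgetful functor sending a span pair $(\cC, \cC_{F})$ to the wide subcategory inclusion $\cC_{F} \hookrightarrow \cC$, regarded as an object of $\Ar(\CatI)$. A morphism in $\Ar(\CatI)$ between two such objects is a commuting square, and since the vertical maps are subcategory inclusions the lift $\cC_{F} \to \cD_{F}$ is unique when it exists; such morphisms are therefore exactly the functors $\cC \to \cD$ preserving forward maps, so that a morphism of span pairs is precisely such a square whose bottom functor in addition preserves pullbacks along forward maps. Thus $\SPair \subseteq \Ar(\CatI)$ is a (non-full) subcategory. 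Because limits and filtered colimits in $\Ar(\CatI) = \Fun([1], \CatI)$ are computed objectwise, hence in $\CatI$, it suffices to show that the objectwise limit (resp. filtered colimit) of a diagram of span pairs is again a span pair, that the structure maps are morphisms of span pairs, and that the resulting cone (resp. cocone) is universal among cones (resp. cocones) landing in $\SPair$; this is precisely the assertion that these limits and filtered colimits are computed in $\CatI$.

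First I would check that the objectwise limit or filtered colimit $\cC_{F}^{\infty} \to \cC^{\infty}$ of a diagram of wide subcategory inclusions is again a wide subcategory inclusion. For limits this is immediate: the core functor $(-)^{\simeq} \colon \CatI \to \Spc$ and the mapping space functors $\Map_{\cC}(x,y)$ both preserve limits, and monomorphisms and equivalences of \igpds{} are stable under limits. For filtered colimits one invokes the two basic facts that in $\CatI$ the core functor and the mapping space functors commute with filtered colimits, together with the stability of monomorphisms of spaces under filtered colimits; wideness, i.e. essential surjectivity of $\cC_{F}^{\infty} \to \cC^{\infty}$, is likewise inherited.

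Next I would verify the two pullback conditions and the universal property. In the limit case everything is objectwise: a cospan in $\lim \cC_{i}$ with a forward leg is a compatible family of such cospans, the objectwise pullbacks assemble to a pullback in $\lim \cC_{i}$, and the base-changed leg is forward at each stage, hence forward since $\cC_{F}^{\infty} = \lim \cC_{F,i}$; the projections, and the map induced by any cone in $\SPair$, then preserve pullbacks along forward maps by the same objectwise description. The filtered colimit case is the crux. Given a forward cospan in $\cC^{\infty}$, the facts recalled above let me lift it to a finite stage $\cC_{i}$, where a pullback exists since $\cC_{i}$ is a span pair; I push this square forward to $\cC^{\infty}$ and claim it is still a pullback. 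This is the essential input: filtered colimits in $\CatI$ commute with finite limits, so the image of a pullback square is again a pullback, and forward-stability descends because the base-changed leg is forward at stage $i$ and $\cC_{F}^{\infty}$ is the filtered colimit of the $\cC_{F,i}$. The same interchange shows each structure map $\cC_{i} \to \cC^{\infty}$, and the map induced by any cocone in $\SPair$, preserves pullbacks along forward maps, yielding the universal property.

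The main obstacle is the filtered colimit case, and specifically two points that must be handled with care: that objects, morphisms, and whole pullback squares of $\cC^{\infty}$ are already represented at a finite stage, which rests on the core and mapping space functors commuting with filtered colimits in $\CatI$; and that filtered colimits in $\CatI$ commute with the finite limits defining the span-pair conditions, so that both the conditions themselves and the pullback-preservation required of the structure maps descend to the colimit. The limit case, by contrast, is essentially formal once one knows it is computed objectwise.
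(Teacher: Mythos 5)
The paper offers no argument of its own here: the Observation is simply a citation of \cite{HHLN2}*{2.4}, which establishes the analogous statement for adequate triples (span pairs being the special case where all maps are backward maps). Your plan --- realizing $\SPair$ as a non-full subcategory of $\Ar(\CatI)$ and checking that objectwise limits and filtered colimits of span pairs are again span pairs with the right universal property --- is a legitimate direct proof and is close in spirit to what the cited reference actually does, so the question is only whether your argument is watertight.

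It is not, at exactly the step you call ``the essential input''. The principle that filtered colimits commute with finite limits in $\CatI$ is a statement about diagrams \emph{of} \icats{}; it does not say that the structure functors $\cC_i \to \cC^{\infty} = \mathrm{colim}_j\, \cC_j$ preserve pullback squares living \emph{inside} the $\cC_i$, and that claim is false for a general filtered diagram. For instance, let $\cC_0 = [1]\times[1]$ (in which the square is a pullback), let $\cC_1$ be obtained from $\cC_0$ by adjoining an object $x$ with maps to $(0,1)$ and $(1,0)$ whose composites to $(1,1)$ agree but with no map to $(0,0)$, and take the sequence $\cC_0 \to \cC_1 \to \cC_1 \to \cdots$; the colimit is $\cC_1$, and the image of the pullback square of $\cC_0$ is not a pullback there. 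What rescues your argument, and what you never invoke in the colimit case, is that the transition functors of your diagram are morphisms of span pairs and hence preserve pullbacks along forward maps: therefore the square you construct at stage $i$ remains a pullback along a forward map in every $\cC_j$ with $j \geq i$, and only then can you conclude in $\cC^{\infty}$, using the facts you did record --- mapping spaces in a filtered colimit of \icats{} are the filtered colimits of the mapping spaces, and filtered colimits in $\Spc$ commute with the finite limits expressing the universal property of a pullback. (The same appeal to preservation by transition functors is what makes the objectwise pullbacks ``assemble'' in your limit case; there it is implicit but standard.) With this repair your proof goes through; without it, the key step is unjustified.
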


\begin{ex}
  For any \icat{} $\cC$, we always have the span pair $(\cC,
  \cC^{\simeq})$.
\end{ex}

\begin{ex}
  If $\cC$ is an \icat{} with pullbacks, then $(\cC, \cC)$ is a
  span pair.
\end{ex}

\begin{notation}
  We write $\CatI^{\pb}$ for the subcategory of $\CatI$ consisting of
  \icats{} with pullbacks and functors that preserve these; the functor $\CatI^{\pb}\rightarrow \SPair, \cC\mapsto (\cC,\cC)$ identifies it as a
  full subcategory of $\SPair$ which is closed under limits and
  filtered colimits.
\end{notation}

Given a span pair $(\cC, \cC_{F})$, we can construct an \icat{}
\[\Span_{F}(\cC) = \Span(\cC, \cC_{F}),\] which informally has the same objects as $\cC$, with
a morphism from $x$ to $y$ given by a \emph{span} (or
\emph{correspondence})
\[
  \begin{tikzcd}
    {} & z \ar[dl,"b"{swap}] \ar[dr, "f"] \\
    x  & & y,
  \end{tikzcd}
\]
where $f$ is in $\cC_{F}$ and $b$ is arbitrary; composition is
given by taking pullbacks in $\mathcal{C}$. See \cite{HHLN2}*{2.12}
for a definition of $\Span_{F}(\cC)$ as a complete Segal space, which
gives a functor
\[ \Span \colon \SPair \to \CatI.\]
Given $\cC \in \CatIpb$, for the span pair $(\cC, \cC_{F})=(\cC, \cC)$ we will write
\[ \Span(\cC) := \Span_{F}(\cC).\]
We note some important properties of this functor:
\begin{itemize}
\item We have $\Span_{\eq}(\cC) = \Span(\cC, \cC^{\simeq}) \simeq \cC^{\op}$
  \cite{HHLN2}*{2.15}.
\item The functor $\Span$ preserves limits --- in fact, on the larger
  \icat{} of adequate triples it has a left
  adjoint, given by the twisted arrow \icat{} \cite{HHLN2}*{2.18}
\end{itemize}

\begin{observation}\label{obs:spaneq}
  A morphism in $\Span_{F}(\cC)$, that is a span
  \[
    \begin{tikzcd}[column sep=small, row sep=small]
      {} & Z \ar[dl, "f"'] \ar[dr, "g"] \\
      X & & Y
    \end{tikzcd}
  \]
  is invertible \IFF{} the maps $f$ and $g$ are invertible in $\cC$;
  see for instance \cite{spans}*{8.2} for a proof.
\end{observation}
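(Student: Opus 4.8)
The statement splits into the two implications; the forward implication (invertible legs give an invertible span) is the easy one, while the converse is where the real content lies.

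For the "if" direction, I would argue by exhibiting an explicit inverse. Suppose $f$ and $g$ are invertible in $\cC$. Since $\cC_{F}$ is a wide subcategory and subcategories are by convention replete, both $f$ and $g$ automatically lie in $\cC_{F}$, so the reversed diagram $Y \xleftarrow{g} Z \xrightarrow{f} X$ is again a legitimate span, defining a morphism $\bar\omega\colon Y \to X$. To see that $\bar\omega$ is inverse to the original span $\omega$, I would compute the two composites via the pullback recipe. The composite $\bar\omega\circ\omega$ is formed from the pullback of the cospan $Z \xrightarrow{g} Y \xleftarrow{g} Z$; because $g$ is an equivalence this pullback is carried by $Z$ with both projections equivalences, so the composite is the span $X \xleftarrow{f} Z \xrightarrow{f} X$, which is equivalent to $\id_{X}$ via $f$. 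The computation of $\omega\circ\bar\omega\simeq\id_{Y}$ is symmetric, using that $f$ is an equivalence.

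For the "only if" direction, the first simplification I would make is to pass to the homotopy category: a morphism of an \icat{} is an equivalence precisely when its image in the homotopy category is invertible, so it suffices to analyze invertible isomorphism classes of spans under composition-by-pullback. The naive approach---writing down an inverse span $\bar\omega = (Y \xleftarrow{f'} W \xrightarrow{g'} X)$ and expanding the conditions $\bar\omega\circ\omega\simeq\id_{X}$ and $\omega\circ\bar\omega\simeq\id_{Y}$---tells me only that in the relevant pullbacks $Z\times_{Y} W$ and $W\times_{X} Z$ the various composite legs are equivalences; extracting from this that $f$ and $g$ are \emph{individually} invertible does not follow formally, since a composite being invertible does not make its factors so. This is the main obstacle.

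To get around it, I would instead invoke the completeness of the Segal space $\Span_{F}(\cC)$, which holds by its construction in \cite{HHLN2}*{2.12}. Completeness says precisely that the degeneracy $s_{0}$, sending an object to its identity span, induces an equivalence from $\cC^{\simeq} = \Span_{F}(\cC)^{\simeq}$ onto the subspace of invertible morphisms of $\Span_{F}(\cC)$. Now the spans with both legs invertible form a union of connected components of the space of all spans, being the preimage of the equivalences under the two leg-extraction maps to $\Ar(\cC)^{\simeq}$, and by the "if" direction this union is contained in the subspace of invertible morphisms. On the other hand every identity span has both legs invertible, so $s_{0}$ factors through the both-legs-invertible locus. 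Since $s_{0}$ is by completeness an equivalence onto all invertible morphisms, this factorization forces the inclusion of the both-legs-invertible locus into the invertible morphisms to be surjective on components, and hence---being an inclusion of components---an equivalence. Therefore every invertible span has both legs invertible, completing the converse.
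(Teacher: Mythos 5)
Your proof is correct, but it takes a genuinely different route from the one the paper points to. The paper gives no argument of its own: it defers to \cite{spans}*{8.2}, where the nontrivial (``only if'') direction is handled by a direct analysis of composites of spans --- pullback manipulations in the homotopy category of the sort your second paragraph starts and then abandons --- and where that characterization of equivalences feeds into the proof that the span Segal space is complete. You invert this logical order: you take completeness of $\Span_{F}(\cC)$ as a black box (citable from \cite{HHLN2}*{2.12}, exactly as this paper itself does when defining $\Span_{F}(\cC)$) and deduce the characterization from it. Your argument is sound: the both-legs-invertible locus is indeed a union of path components of the space of spans (preimage of equivalences under the two leg-restriction maps to $\Ar(\cC)^{\simeq}$, and equivalences form a union of components there), your ``if'' direction places it inside the components of invertible edges, the degeneracy lands in it, and $\pi_{0}$-surjectivity of $s_{0}$ onto $X_{1}^{\mathrm{eq}}$ then forces equality of these two unions of components. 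What each approach buys: the direct argument of the cited reference is self-contained and is what one must do if completeness is not yet available (a two-out-of-six--style argument does succeed where your naive attempt stalls, which is worth knowing); your argument is shorter and avoids all pullback bookkeeping, at the cost of being essentially equivalent in content to completeness itself --- it is non-circular only because you may legitimately treat the completeness statement in \cite{HHLN2} as an independent input, so you should be aware that in much of the literature (including \cite{spans} and Barwick's original treatment) the implication runs the other way. Your explicit-inverse computation in the ``if'' direction, including the remark that repleteness of $\cC_{F}$ puts the equivalence $f$ into the forward class, is the standard one and is fine.
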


Next, we recall two useful results relating spans and (co)cartesian fibrations. We begin with Barwick's \emph{unfurling construction}:

\begin{thm}[Barwick]\label{thm:unfurling}
  Suppose $\cC$ is an \icat{} with pullbacks and
  \[ \Phi \colon \cC^{\op} \to
  \CatI\] is a functor such that
  \begin{itemize}
  \item for every morphism $f \colon x \to y$ in $\cC$, the
    functor $f^{*}:= \Phi(f)$ has a left adjoint $f_{!}$,
  \item for every pullback square
        \[
      \begin{tikzcd}
        w \ar[r, "f'"] \ar[d, "g'"] & z \ar[d, "g"] \\
        x \ar[r, "f"] & y,
      \end{tikzcd}
    \]
    in $\cC$, the Beck--Chevalley transformation
    \[ f'_{!}g'^{*} \to g^{*}f_{!}\]
    is an equivalence.
  \end{itemize}
  Let $p \colon \cE \to \cC$ be the cartesian fibration for $\Phi$ and write $\cE_{\mathrm{cart}}$ for the subcategory of $\cE$ spanned by the $p$-cartesian edges. Then $(\cE,\cE_{\mathrm{cart}})$ is a span pair, and moreover the functor
 \[ \Span(p)^{\op} \colon \Span_{\mathrm{cart}}(\cE)^{\op} \to
    \Span(\cC)^{\op}\simeq \Span(\cC)\] is a cocartesian fibration for a functor
  $\Span(\cC) \to \CatI$ that restricts to $\Phi$ on $\cC^{\op}$ and
  is given on forward maps by taking left adjoints.
\end{thm}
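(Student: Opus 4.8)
The plan is to first verify that $(\cE, \cE_{\mathrm{cart}})$ is a span pair, and then to directly exhibit the cocartesian edges of $\Span(p)^{\op}$ and read off the classifying functor. For the span pair condition, recall that in the total space of a cartesian fibration pullbacks along cartesian edges always exist and are again cartesian: given a cartesian $f\colon x\to y$ over $\bar f$ and any $g\colon z \to y$ over $\bar g$, I would form $\bar w := \bar x \times_{\bar y}\bar z$ in $\cC$, choose a cartesian lift $f'\colon w \to z$ of the projection $\bar w \to \bar z$, and use the cartesian property of $f$ to factor $g\circ f'$ through $f$ as $f \circ g'$; the resulting square is a pullback in $\cE$ with $f'$ cartesian. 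This gives conditions (1) and (2) of a span pair.

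For the main statement, the key preliminary observation is that the hypotheses make $p$ into a bifibration: since each $f^{*}=\Phi(f)$ admits a left adjoint $f_{!}$, the $p$-cocartesian edges over $f$ exist and witness $f_{!}$, and the Beck--Chevalley condition guarantees that these assemble into a functor $\cC \to \CatI$ sending $f \mapsto f_{!}$. I would then compute the fibers of $\Span(p)^{\op}$: a morphism in the fiber of $\Span(p)$ over $c$ is a span in $\cE_{\mathrm{cart}}$ lying over the identity span $c \leftarrow c \to c$, whose forward (cartesian) leg must therefore be an equivalence, so this fiber is $\cE_{c}^{\op}$; passing to the opposite, the fiber of $\Span(p)^{\op}$ over $c$ is $(\cE_{c}^{\op})^{\op}\simeq \cE_{c}=\Phi(c)$, which identifies the restriction of the classifying functor along $\cC^{\op}\hookrightarrow\Span(\cC)$ with $\Phi$.

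Next I would write down the candidate cocartesian edges. Unwinding the opposite and the self-duality $\Span(\cC)^{\op}\simeq\Span(\cC)$, a morphism of $\Span(p)^{\op}$ lying over a span $c \xfrom{\bar b}\bar d \xto{\bar f} c'$ and with source $e \in \cE_{c}$ corresponds to a span
\[
  \begin{tikzcd}[column sep=small, row sep=small]
    {} & \bar b^{*}e \ar[dl, "\beta"'] \ar[dr, "\phi"] \\
    e' & & e
  \end{tikzcd}
\]
in $\cE_{\mathrm{cart}}$ with $\phi$ a cartesian lift of $\bar b$ and $\beta$ an arbitrary (backward) map over $\bar f$. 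The candidate is obtained by taking $\beta$ to be a $p$-cocartesian lift of $\bar f$, so that $e' \simeq \bar f_{!}\bar b^{*}e$; by construction its target in the fiber over $c'$ is $\bar f_{!}\bar b^{*}e$, which is exactly the value predicted by the claimed classifying functor. It remains to prove that these candidate edges are genuinely cocartesian and closed under composition.

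The main obstacle is this last verification, and it is where the Beck--Chevalley hypothesis is essential. Composing two such candidate edges, over spans $\alpha$ and $\alpha'$, produces a composite in $\Span(\cC)$ whose apex is a pullback, and on total spaces matching this composite with a single candidate edge over $\alpha'\circ\alpha$ amounts precisely to the equivalence $f'_{!}g'^{*}\simeq g^{*}f_{!}$ supplied by Beck--Chevalley. I would carry this out using the model-independent description of mapping spaces in $\Span$ from \cite{HHLN2}, checking the cocartesian lifting property against the span mapping-space formula; closure under composition then upgrades the candidate edges to an honest cocartesian structure, and reading off the induced transition functors gives $\bar b^{*}$ on backward maps and $\bar f_{!}$ on forward maps, as claimed.
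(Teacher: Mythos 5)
The paper offers no argument of its own for this theorem: its ``proof'' is a citation of \cite{BarwickMackey}*{11.6} and \cite{HHLN2}*{3.2 and 3.4}. Your proposal is therefore necessarily a different route in the literal sense, but in substance it is a faithful reconstruction of the unfurling argument in those references, and the outline is correct. You get the orientation of the spans right (a morphism of $\Span(p)^{\op}$ over $c \leftarrow \bar{d} \rightarrow c'$ is a span in $\cE$ with cartesian leg over $\bar{b}$ and arbitrary leg over $\bar{f}$), you identify the correct candidate cocartesian edges (cartesian leg over $\bar{b}$, cocartesian leg over $\bar{f}$, matching the characterization in \cite{HHLN2}*{3.2} that the paper itself uses in its Remark on cocartesian edges of $\Bispan_{\pb,\teq}(\Ar(\xF))$), and you correctly locate the one place where the Beck--Chevalley hypothesis is genuinely needed: the pullback of a cocartesian edge along a cartesian edge over a pullback square in $\cC$ must again be cocartesian, which is exactly what closure of the candidate edges under composition amounts to. Your fibre computation identifying the restriction to $\cC^{\op}$ with $\Phi$, and the transition functors $\bar{b}^{*}$ and $\bar{f}_{!}$, is likewise correct.

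Three small repairs would be needed to make the sketch complete, none of them a genuine gap. First, the Beck--Chevalley condition is not what makes $f \mapsto f_{!}$ ``assemble into a functor'' $\cC \to \CatI$; the existence of the left adjoints already makes $p$ a cocartesian fibration, and its straightening provides that functor for free --- Beck--Chevalley only enters at the level of span composition, as you later use. Second, the final step of your plan tacitly invokes the standard criterion (\cite{HTT}*{2.4.2.8}, or its model-independent analogue) that a functor admitting locally cocartesian lifts is a cocartesian fibration if and only if the locally cocartesian edges are closed under composition; this should be cited explicitly, since your mapping-space check only yields local cocartesianness directly. Third, in the span-pair step you construct the square over the base pullback but assert rather than prove that it is a pullback in $\cE$; this is the standard lemma that a commutative square in $\cE$ lying over a pullback in $\cC$ with both parallel edges $p$-cartesian is itself a pullback, verified fibrewise on mapping spaces. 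With these points supplied, your argument is exactly the proof of \cite{BarwickMackey}*{11.6} as reworked in \cite{HHLN2}.
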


\begin{proof}
  This is a special case of \cite{BarwickMackey}*{11.6}; see also
  \cite{HHLN2}*{3.2 and 3.4} for further discussion.
\end{proof}

\begin{remark}
  In the situation of Theorem~\ref{thm:unfurling}, the universal property of
  the \itcat{} $\SPAN(\cC)$ of spans in $\cC$ \cite{MacphersonCorr, Stefanich} says that there is a
  unique functor of \itcats{} $\SPAN(\cC) \to \CATI$ that extends
  $\Phi$. By the main result of \cite{CLR} its underlying functor of \icats{}
  corresponds to the cocartesian fibration
  $\Span(p)^{\op}$.
\end{remark}

\begin{thm}\label{thm:fibforspan}
  Suppose $p \colon \cE \to \cC$ is the cartesian fibration for a
  functor $F \colon \cC^{\op} \to \CatIpb$. Then the cocartesian
  fibration for the composite functor $\Span \circ F \colon \cC^{\op}
  \to \Cat$ is
  \[ \Span(p) \colon \Span_{\fw}(\cE) \to
    \Span_{\eq}(\cC) \simeq \cC^{\op},\] where
  $\cE_{\fw}$ contains the (``fibrewise'') maps in $\cE$ that map to
  equivalences in $\cC$ under $p$.
\end{thm}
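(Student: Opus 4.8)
The plan is to check directly that $\Span(p)$ is a cocartesian fibration and that its straightening is $\Span\circ F$; here $\Span_{\eq}(\cC)\simeq\cC^{\op}$ is already recorded, and $\Span\circ F$ is formed by composing $F$ with the inclusion $\CatIpb\hookrightarrow\SPair$, $\cD\mapsto(\cD,\cD)$, and then with $\Span$. First I would verify that $(\cE,\cE_{\fw})$ really is a span pair. Since a map lies in $\cE_{\fw}$ precisely when $p$ sends it to an equivalence, $\cE_{\fw}$ is a wide subcategory. For the pullback axiom, given a fibrewise map $f\colon a\to b$ and an arbitrary $g\colon c\to b$, I would transport $f$ along the $p$-cartesian lift of $\psi:=p(g)$ into the single fibre $F(p(c))$ and form the pullback there, which exists because $F$ takes values in $\CatIpb$; that the transition functor $\psi^{*}$ preserves pullbacks is exactly what makes the resulting square a pullback in $\cE$ rather than merely fibrewise. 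The new leg over $c$ is again fibrewise, and applying $p$ turns the square into one whose two horizontal maps are equivalences, hence a pullback in $\cC$. Thus $p\colon(\cE,\cE_{\fw})\to(\cC,\cC^{\simeq})$ is a morphism of span pairs, and functoriality of $\Span$ produces $\Span(p)\colon\Span_{\fw}(\cE)\to\Span_{\eq}(\cC)\simeq\cC^{\op}$.

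Next I would identify the fibres. An object over $x$ is an object of $F(x)$, while a morphism in the fibre is a span in $\cE$ lying over the constant span at $x$; both of its legs then project to equivalences, so the whole span lives in $F(x)$ with both legs fibrewise. Hence the fibre is $\Span(F(x))=(\Span\circ F)(x)$, as wanted. A morphism $x\to y$ in $\cC^{\op}\simeq\Span_{\eq}(\cC)$ is a map $\phi\colon y\to x$ of $\cC$, and I claim its cocartesian lift at $a\in F(x)$ is the backward morphism $a\to\phi^{*}a$ associated to the $p$-cartesian edge $\beta\colon\phi^{*}a\to a$ over $\phi$.

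To justify this I would use that every morphism of $\Span_{\fw}(\cE)$ factors canonically as a backward morphism (the image of a map of $\cE^{\op}$) followed by a forward morphism (the image of a map of $\cE_{\fw}$). Given any morphism out of $a$ over a base map $x\to w$, presented as a span $a\leftarrow d\to c$ with fibrewise right leg, I would factor its left leg $d\to a$ through the $p$-cartesian lift of $\psi:=p(d\to a)$, writing it as $d\to\psi^{*}a\to a$; this exhibits the span as the candidate cocartesian lift $a\to\psi^{*}a$ followed by a span $\psi^{*}a\leftarrow d\to c$ lying entirely in the fibre $\Span(F(w))$. Formally this says the relevant square of span mapping spaces is a pullback, which is precisely the cocartesian condition; the same computation shows that cocartesian transport along $\phi$ sends a fibrewise span $a\leftarrow e\to a'$ to $\phi^{*}a\leftarrow\phi^{*}e\to\phi^{*}a'$, i.e.\ agrees with $\Span(\phi^{*})=\Span(F(\phi))$. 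With fibres and transition functors matching, straightening then identifies $\Span(p)$ with the cocartesian fibration classifying $\Span\circ F$.

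The main obstacle is exactly this last verification: converting the universal property of the $p$-cartesian edge $\beta$ into the cocartesian universal property of its backward image in $\Span_{\fw}(\cE)$. Concretely it comes down to checking that the square of span mapping spaces is cartesian, using the explicit description of morphisms in $\Span_{\fw}(\cE)$ as spans together with the defining pullback property of $\beta$, and the bookkeeping of which legs are forward and of the variance over $\cC^{\op}$ is where care is needed. I expect the cleanest route is to isolate a general recognition principle---that a morphism of span pairs whose underlying functor is a cartesian fibration and whose forward maps are the fibrewise ones induces a cocartesian fibration on spans---and to extract it from the fibrational machinery of \cite{HHLN2}, of which the unfurling result \cref{thm:unfurling} is a close relative.
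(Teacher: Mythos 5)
The paper's entire proof of \cref{thm:fibforspan} is a one-line observation that it is a special case of \cite{HHLN2}*{3.9} --- which is exactly the ``general recognition principle'' you propose to extract in your final paragraph, so your endpoint and the paper's proof coincide. Your direct sketch of how such a result would be proved is structurally sound: the verification that $(\cE,\cE_{\fw})$ is a span pair (where the real point is that \emph{all} transition functors $\chi^{*}$, not just $\psi^{*}$, preserve pullbacks, since one tests the fibrewise pullback against maps from objects in arbitrary fibres), the identification of the fibres with $\Span(F(x))$, and the description of the cocartesian lifts as backward morphisms on $p$-cartesian edges are all correct, and the factorization of an arbitrary span through such a backward morphism is indeed the key mechanism behind \cite{HHLN2}*{3.9}.

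As a standalone argument, however, the proposal has two genuine gaps, only the first of which you acknowledge. First, the cocartesian criterion: converting the universal property of the cartesian edge $\beta$ into a pullback square of mapping spaces in $\Span_{\fw}(\cE)$ is not routine, because mapping spaces in span \icats{} are cores of \icats{} of spans, and controlling them essentially forces one into the Segal-space or two-variable-fibration technology of \cite{HHLN2}. Second, and more seriously, ``fibres and transition functors matching'' does not identify the straightening of $\Span(p)$ with $\Span\circ F$: an equivalence of functors $\cC^{\op}\to\CatI$ requires coherent naturality data, not a pointwise comparison of values and transports, and producing this coherence is precisely what the functoriality of $\Span$ on span pairs together with its compatibility with (un)straightening --- that is, \cite{HHLN2}*{3.9} itself --- supplies. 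So the honest completion of your sketch is exactly the paper's proof: quote \cite{HHLN2}*{3.9}.
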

\begin{proof}
  This is a special case of \cite{HHLN2}*{3.9}.
\end{proof}

\subsection{Bispans}\label{sec:bispans}
We now recall the definition of \icats{} of \emph{bispans}, following
\cite{bispans}.

\begin{defn}[\cite{bispans}*{2.4.3 and 2.4.6}]\label{def:bispantrip}
  A \emph{bispan triple} $(\cC, \cC_{F}, \cC_{L})$ consists of an
  \icat{} $\cC$ together with two wide subcategories $\cC_{F}$ and
  $\cC_{L}$ such that the following conditions hold:
  \begin{enumerate}[(1)]
  \item Both $(\cC, \cC_{F})$ and $(\cC, \cC_{L})$ are span pairs.
  \item\label{it:bispanradj} Let $\cC_{/x}^{L} \subseteq \cC_{/x}$ be the full subcategory
    spanned by the maps to $x$ that lie in $\cC_{L}$; for $f \colon x
    \to y$ in $\cC_{F}$, the functor
    $f^{*} \colon \cC_{/y}^{L} \to \cC_{/x}^{L}$ given by basechange along $f$ has a right adjoint
    $f_{*}$.
  \item\label{it:bispanmate} For every pullback square
        \[
      \begin{tikzcd}
        x' \ar[r, "f'"] \ar[d, "\xi"{swap}] & y' \ar[d, "\eta"] \\
        x \ar[r, "f"'] & y
      \end{tikzcd}
    \]
    with $f$ in $\cC_{F}$, the commutative square
    \[
      \begin{tikzcd}
        \cC^{L}_{/y} \ar[r, "f^{*}"] \ar[d, "\eta^{*}"'] & \cC^{L}_{/x}
        \ar[d, "\xi^{*}"] \\
        \cC^{L}_{/y'} \ar[r, "f'^{*}"'] & \cC^{L}_{/x'}
      \end{tikzcd}
    \]
    is right adjointable, \ie{} the mate transformation $\eta^{*}f_{*} \to
    f'_{*}\xi^{*}$ is an equivalence.
  \end{enumerate}
\end{defn}

\begin{remark}
  If $\cC_{L} = \cC$, then condition \ref{it:bispanradj} says
  precisely that $\cC$ is locally cartesian closed; in this case, condition
  \ref{it:bispanmate} is automatic.
\end{remark}

\begin{thm}[\cite{bispans}*{2.5.2(1)}]
  Suppose $(\cC, \cC_{F})$ is a span pair and suppose $\cC_{L}$ is a
  wide subcategory of $\cC$. Then $(\cC, \cC_{F}, \cC_{L})$ is a
  bispan triple \IFF{} $(\Span_{F}(\cC)^{\op}, \cC_{L})$ is a span
  pair, where we regard $\cC_{L}$ as contained in the subcategory
  $\cC \simeq \Span_{\mathrm{eq}}(\cC)^{\op}$ inside
  $\Span_{F}(\cC)^{\op}$.\qed
\end{thm}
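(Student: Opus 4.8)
The plan is to translate the span-pair condition for $(\Span_{F}(\cC)^{\op}, \cC_{L})$ into concrete statements about pullback squares and to match these against the three conditions defining a bispan triple, proving both implications simultaneously from the same dictionary. Write $\mathcal{S} := \Span_{F}(\cC)$. The first step is to recall the two basic families of morphisms in $\mathcal{S}$: every span $y \xleftarrow{b} v \xto{f} z$ with $f \in \cC_{F}$ factors as the \emph{backward} span $y \xleftarrow{b} v \xto{\id} v$ followed by the \emph{forward} span $v \xleftarrow{\id} v \xto{f} z$. Under the inclusion $\cC \simeq \Span_{\mathrm{eq}}(\cC)^{\op} \hookrightarrow \mathcal{S}^{\op}$ the backward spans are exactly the morphisms in the image of $\cC$, so a morphism $g \colon x \to y$ in $\cC_{L}$ is the backward span $y \xleftarrow{g} x \xto{\id} x$, while a map $f \colon y \to z$ in $\cC_{F}$ produces a forward-type morphism $z \to y$ in $\mathcal{S}^{\op}$. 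Consequently every morphism of $\mathcal{S}^{\op}$ factors as a forward-type map followed by a backward-type map, and the forward maps $\cC_{L}$ of the prospective span pair are precisely backward-type maps.

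Using this factorization together with the pasting lemma for pullbacks, I would reduce the span-pair condition for $(\mathcal{S}^{\op}, \cC_{L})$ --- existence of pullbacks of $\cC_{L}$-maps along arbitrary maps, and stability of $\cC_{L}$ under such pullbacks --- to the two elementary cases of pulling a $\cC_{L}$-map back along a backward-type map and along a forward-type map. The backward case concerns two morphisms that both lie in the image of $\cC$; here I would show, using that $\Span$ preserves limits together with the identification $\Span_{\mathrm{eq}}(\cC) \simeq \cC^{\op}$, that the relevant pullback square in $\mathcal{S}^{\op}$ exists and lands in $\cC$ exactly when the pullback of $g \in \cC_{L}$ exists in $\cC$ and again lies in $\cC_{L}$ --- that is, exactly when $(\cC, \cC_{L})$ is a span pair, which is condition (1).

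The forward case is the heart of the statement. Given $g \colon x \to y$ in $\cC_{L}$, viewed as an object of $\cC^{L}_{/y}$, and a forward-type map arising from $f \colon y \to z$ in $\cC_{F}$, I claim the pullback in $\mathcal{S}^{\op}$ is represented by $f_{*}g \in \cC^{L}_{/z}$: one leg is its structure map $D \to z$, which lies in $\cC_{L}$ precisely because condition (2) supplies the right adjoint $f_{*}\colon \cC^{L}_{/y}\to \cC^{L}_{/z}$, and the other leg $D \to x$ is the span $x \xleftarrow{c} P \xto{\pi_{1}} D$, where $P = D\times_{z} y$ is the domain of $f^{*}f_{*}g$, the map $c$ is the component of the counit $f^{*}f_{*} \to \id$, and $\pi_{1} \in \cC_{F}$ is the base change of $f$ along $f_{*}g$. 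I would verify the universal property by computing mapping spaces out of a test object $T$ via the \cite{HHLN2} description of $\mathcal{S}^{\op}$ and identifying $\Map(T,x)\times_{\Map(T,y)}\Map(T,z)$ with $\Map(T, D)$ through the adjunction $f^{*}\dashv f_{*}$; the base-change coherence needed to make this an equivalence \emph{naturally in} $T$ --- where a general map out of $T$ introduces further pullbacks --- is exactly the mate condition (3). Reading these identifications backwards recovers $f_{*}$ from the existence of pullbacks along forward-type maps and the Beck--Chevalley equivalences from their stability, yielding the converse implication as well.

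The main obstacle I anticipate is precisely this $\infty$-categorical coherence in the forward case: one must check that the universal property of the pullback square in $\Span_{F}(\cC)^{\op}$ is equivalent to the adjunction data $f_{*}$ together with the mate equivalences as an equivalence of mapping \emph{spaces}, rather than merely matching the two structures on homotopy categories. Setting up the comparison of the $\Map_{\mathcal{S}^{\op}}$-spaces carefully and tracking the naturality in $T$ that forces condition (3) is the technical crux; the reductions of the first two paragraphs are comparatively formal.
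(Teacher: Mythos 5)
The paper does not prove this statement at all---it is quoted with a \qed{} and a citation to \cite{bispans}*{2.5.2(1)}---and your outline follows essentially the same route as that cited proof: factor morphisms of $\Span_{F}(\cC)^{\op}$ into the image of $\cC$ and reversed $\cC_{F}$-maps, identify pullbacks of $\cC_{L}$-maps along the former with pullbacks in $\cC$ and along the latter with the distributivity diagram on $f_{*}g$ (structure map plus counit leg), with the mate condition supplying exactly the coherence needed for the universal property against arbitrary test spans; this is precisely \cite{bispans}*{2.5.10, 2.5.12}, which the present paper itself invokes in \cref{def:bispanmor}. One small correction: your appeal to ``$\Span$ preserves limits'' for the backward case is a non sequitur, since that statement concerns limits in $\SPair$ rather than limits inside a span \icat{}, but the mapping-space computation you already propose for the forward case settles that (easier) case directly.
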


\begin{defn}
  Suppose $(\cC, \cC_{F}, \cC_{L})$ is a bispan triple. Then we define
  \[ \Bispan_{F,L}(\cC) := \Span_{L}(\Span_{F}(\cC)^{\op}).\] If
  $\cC_{L} = \cC$ we abbreviate this to $\Bispan_{F}(\cC)$, and if
  also $\cC_{F} = \cC$ we just write $\Bispan(\cC)$.
\end{defn}

Explicitly, this means that a generic morphism in $\Bispan_{F,L}(\cC)$ looks as follows:
\[
  \begin{tikzcd}
    & \arrow[dl,"\in\cC"'] B\arrow[dr,"\in\cC_F"] && C\arrow[dr, "\in\cC_L"]\\
    A && C\arrow[ur,equals] && D\rlap;
  \end{tikzcd}
\]
here the span $A\gets B\to C$ defines a map from $A$ to $C$ in $\Span_F(\cC)$, and hence a map from $C$ to $A$ in $\Span_F(\cC)^\op$. We will typically denote such a generic morphism by
\[
  \begin{tikzcd}
    A & \arrow[l, "\in\cC"'] B \arrow[r,"\in\cC_F"] & C\arrow[r,"\in\cC_L"] & D\rlap.
  \end{tikzcd}
\]

\begin{remark}
  The fact that we get the correct composition of bispans by viewing
  them as ``spans in spans'' was first observed by
  Street~\cite{StreetPoly}. The first (and rather different)
  construction of $\Bispan(\xF)$ as an \icat{} is in the thesis of
  Cranch~\cite{CranchThesis}.
\end{remark}

\begin{defn}\label{def:bispanmor}

  If $(\cC, \cC_{F}, \cC_{L})$ and $(\cD, \cD_{F}, \cD_{L})$ are bispan triples, then a
  \emph{morphism of bispan triples} is a functor $\Phi \colon \cC \to \cD$ that induces morphisms of span pairs
  $(\cC, \cC_{F}) \to (\cD, \cD_{F})$ and
  $(\Span_{F}(\cC)^{\op}, \cC_{L}) \to (\Span_{F}(\cD)^{\op},
    \cD_{L})$.

  Assuming the first condition, the second one is equivalent to $(\cC, \cC_{L}) \to (\cD, \cD_{L})$ being a morphism of span pairs and the square
  \[
    \begin{tikzcd}
      \cC^{L}_{/y} \ar[r, "f^{*}"] \ar[d, "\Phi"] & \cC^{L}_{/x} \ar[d, "\Phi"'] \\
      \cD^{L}_{/\Phi(y)} \ar[r, "\Phi(f)^{*}"'] & \cD^{L}_{/\Phi(x)}
    \end{tikzcd}
  \]
  being right adjointable for every morphism $f \colon x \to y$ in $\cC_{F}$, \ie{} the Beck--Chevalley transformation
  \[ \Phi \circ f_{*} \to \Phi(f)_{*} \circ \Phi\] is an equivalence; this follows from the identification of pullbacks in $\Span_F(\cC)^{\op}$ of maps $\cC \subset \Span_F(\cC)^{\op}$ along forwards and backwards morphisms in $\Span_F(\cC)^{\op}$ with pullbacks in $\cC$ and \emph{distributivity
    diagrams} in $\cC$ respectively, see~\cite{bispans}*{2.5.10, 2.5.12}.
\end{defn}

\subsection[Symmetric monoidal \icats{}]{Symmetric monoidal $\bm\infty$-categories}\label{sec:symmon}
In this subsection we recall some definitions related to commutative
monoids and symmetric monoidal structures on \icats{}.

\begin{defn}\label{def:cmon}
  Let $\cC$ be an \icat{} with finite
  products. A \emph{commutative monoid} in $\cC$ is a functor
  \[ M \colon \Span(\xF) \to \cC\]
  that preserves finite products. We write $\CMon(\cC)$ for the
  full subcategory of $\Fun(\Span(\xF), \cC)$ spanned by the
  commutative monoids.
\end{defn}

\begin{remark}
  This definition of commutative monoids fits into the framework for
  algebraic structures defined by Segal conditions from
  \cite{patterns1}: We can endow $\Span(\xF)$ with the structure of an
  \emph{algebraic pattern} where the inert--active factorization
  system is that given by the backwards and forwards maps, and the
  point is the only
  elementary object. Then a Segal
  $\Span(\xF)$-object in $\cC$ is a functor $M$ such that
  \[ M(\bfu{n}) \isoto \lim_{(\{\bfone\}_{/\bfu{n}})^{\op}} M(\bfone) \simeq
    \prod_{i=1}^{n} M(\bfone)\]
  for every $n$.
\end{remark}

\begin{remark}
  This definition of commutative monoids is equivalent to that used in
  \cite{HA} in terms of finite pointed sets; see for instance
  \cite{harpaz}*{5.14} or \cite{norms}*{C.1}.
\end{remark}

\begin{defn}
  A symmetric monoidal \icat{} is a commutative monoid in $\CatI$; its
  \emph{underlying \icat{}} is the value at $\bfone$. Given a
  symmetric monoidal structure on an \icat{} $\cC$, we
  will denote the corresponding cocartesian and cartesian fibrations
  by\footnote{Here $\Span(\xF)^{\op} \simeq \Span(\xF)$, but we write
    $\op$ as a reminder that this is a cartesian fibration.}
  \[ \cC^{\otimes} \to \Span(\xF), \quad \cC_{\otimes} \to
    \Span(\xF)^{\op}.\]
  We say that a morphism in $\cC^{\otimes}$ is \emph{inert} if
  it is cocartesian over a backwards morphism in $\Span(\xF)$;
  similarly, a morphism in $\cC_{\otimes}$ is \emph{inert} if it is
  cartesian over a (reversed) backwards morphism in
  $\Span(\xF)^{\op}$.
\end{defn}

\begin{defn}
  Suppose $\cC^{\otimes}, \cD^{\otimes} \to
  \Span(\xF)$ are symmetric monoidal \icats{}. A \emph{symmetric
    monoidal functor} from $\cC$ to $\cD$ is a
  commutative triangle
  \[
    \begin{tikzcd}
      \cC^{\otimes} \ar[rr, "F"] \ar[dr] & &
      \cD^{\otimes} \ar[dl] \\
       & \Span(\xF)
    \end{tikzcd}
  \]
  where $F$ preserves cocartesian morphisms. We say that $F$ is
  \emph{lax symmetric monoidal} if it instead only preserves inert
  morphisms. We write
  \[\FunLSM(\cC^\otimes, \cD^\otimes)\coloneqq\Fun_{/\Span(\xF)}^\textup{inert}(\cC^{\otimes}, \cD^{\otimes})\] for
  the full subcategory of
  $\Fun_{/\Span(\xF)}(\cC^{\otimes}, \cD^{\otimes})$ spanned by the
  lax symmetric monoidal functors.
\end{defn}

\begin{remark}
  It follows from \cite{envelopes}*{5.1.15} that this
  definition of lax symmetric monoidal functors agrees with the more
  standard one, with $\xF_{*}$ in place of $\Span(\xF)$. By \cite{anmnd2}*{2.2.7 and 2.2.10}, we can also
  equivalently define a lax symmetric monoidal functor to be a
  commutative triangle as above where $F$ preserves finite products.
\end{remark}

\begin{defn}
  A \emph{commutative algebra} in a symmetric monoidal \icat{}
  $\cC^{\otimes}$ is a lax symmetric monoidal functor from
  $*^\otimes=\Span(\xF)$; we write
  \[ \CAlg(\cC) \coloneqq \FunLSM(*^\otimes,
    \cC^\otimes) =\Fun_{/\Span(\xF)}^\txt{inert}(\Span(\xF),\cC^\otimes) \]
  for the \icat{} of commutative algebras in $\cC$.
\end{defn}

\subsection{Cartesian and cocartesian symmetric monoidal structures}\label{sec:cart}
In this subsection we review some results from \cite{HA} on cartesian and
cocartesian symmetric monoidal structures, meaning those arising from
products and coproducts in an \icat{}. Let us first recall a precise definition of such structures:
\begin{defn}[\cite{HA}*{2.4.0.1}]\label{defn:cartsymmon}
  Let $\cC^{\otimes}$ be a symmetric monoidal \icat{}. We say that it is
  \emph{cocartesian}
  if
  \begin{itemize}
  \item the unit $\bbone$ in $\cC$ is initial,
  \item for all objects $X,Y \in \cC$, the maps
    \[ X \simeq X \otimes \bbone \to X \otimes Y \from \bbone \otimes
      Y \simeq Y\]
    exhibit $X \otimes Y$ as the coproduct of $X$ and $Y$.
  \end{itemize}
  Dually, $\cC^{\otimes}$ is \emph{cartesian} if
  \begin{itemize}
  \item the unit $\bbone$ in $\cC$ is terminal,
  \item for all objects $X,Y \in \cC$, the maps
    \[ X \simeq X \otimes \bbone \from X \otimes Y \to \bbone \otimes
      Y \simeq Y\]
    exhibit $X \otimes Y$ as the product of $X$ and $Y$.
  \end{itemize}
\end{defn}

\begin{remark}\label{rk:cocart-adj}
  As the unit $\bbone_{\cC}$ is simply the image of the essentially
  unique object of $\cC(\bfn0)\simeq*$ under the functor
  $\cC(\bfn0=\bfn0\to\bfn1)$, in the cocartesian case the first condition is equivalent to
  demanding that $\cC(\bfn0=\bfn0\to\bfn1)$ is a left
  adjoint. If in this case also $\cC(\bfn2=\bfn2\to\bfn1)$ is a left adjoint, then
  the second condition follows, because
  $(X,\bbone)\to (X,Y)\gets (\bbone,Y)$ is a coproduct diagram in
  $\cC(\bfn2)$. Conversely, it will follow from Proposition~\ref{propn:spancocart} that all forward maps in a cocartesian symmetric monoidal structure are left adjoint to the corresponding backwards map.
\end{remark}

\begin{thm}[Lurie]\label{thm:cartuniv}
  Suppose $\cC$ is an \icat{} with finite coproducts. Then there
  is a \emph{unique} cocartesian symmetric monoidal \icat{}
  $\cC^{\amalg}$ such that $\cC^{\amalg}_{\bfone}
  \simeq \cC$. Dually, if $\cC$ is an \icat{} with finite products, then there is a \emph{unique} cartesian symmetric monoidal \icat{} $\cC^{\times}$ such that $\cC^{\times}_{\bfone} \simeq \cC$.
\end{thm}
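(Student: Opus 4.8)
The plan is to prove the cocartesian statement and deduce the cartesian one by duality. Indeed, if $\cC$ has finite products then $\cC^{\op}$ has finite coproducts, and post-composing any symmetric monoidal structure $\Span(\xF)\to\CatI$ with the involution $(-)^{\op}\colon\CatI\to\CatI$ again preserves products, hence is again a symmetric monoidal structure, now with underlying \icat{} $\cC^{\op}$. Since a functor is a left adjoint exactly when its opposite is a right adjoint, and initial objects become terminal, this involution interchanges the two conditions of \cref{defn:cartsymmon}. Thus applying the cocartesian case to $\cC^{\op}$ and then post-composing with $(-)^{\op}$ produces, uniquely, the cartesian structure on $\cC$.

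For existence of $\cC^{\amalg}$, I would proceed by unfurling. Consider the functor $\Phi := \cC^{(-)}\colon \xF^{\op}\to\CatI$ sending $S\mapsto\Fun(S,\cC)\simeq\cC^{S}$ and $f\colon S\to T$ to the restriction $f^{*}$. As $\cC$ has finite coproducts, each $f^{*}$ admits a left adjoint $f_{!}$, given by coproduct over the finite fibres of $f$, and these left adjoints satisfy base change along pullback squares of finite sets. Applying \cref{thm:unfurling} with $\xF$ in place of $\cC$ then yields a functor $\cC^{\amalg}\colon\Span(\xF)\to\CatI$ restricting to $\Phi$ on $\xF^{\op}$ and sending forward maps to the adjoints $f_{!}$. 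This functor is product-preserving, since its value at $S$ is $\cC^{S}$ and the backward maps act by the evident projections, so it defines a symmetric monoidal structure with underlying \icat{} $\Phi(\bfu1)\simeq\cC$; it is cocartesian because the forward map $\bfn0\to\bfn1$ acts by the initial object while $\bfn2\to\bfn1$ acts by the binary coproduct, the structure maps of \cref{defn:cartsymmon} being induced by the units of $f_{!}\dashv f^{*}$.

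For uniqueness, let $M\colon\Span(\xF)\to\CatI$ be any cocartesian symmetric monoidal structure with $M(\bfu1)\simeq\cC$. Its restriction $M|_{\xF^{\op}}$ to the backward maps is a product-preserving functor $\xF^{\op}\to\CatI$; since $\xF^{\op}$ is the free \icat{} with finite products on a single object, evaluation at $\bfu1$ gives an equivalence $\Fun^{\times}(\xF^{\op},\CatI)\simeq\CatI$, so $M|_{\xF^{\op}}$ is canonically identified with $\Phi=\cC^{(-)}$. On the other hand, \cref{propn:spancocart} shows that every forward map of $M$ is left adjoint to the corresponding backward map; the same holds for $\cC^{\amalg}$ by construction. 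Hence both $M$ and $\cC^{\amalg}$ are Beck--Chevalley functors out of $\Span(\xF)$ — functors sending forward maps to left adjoints of backward maps — with the same restriction $\Phi$ along $\xF^{\op}\hookrightarrow\Span(\xF)$.

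It remains to see that such a functor is determined by this restriction, and I expect this reconstruction to be the main obstacle. What is needed is that the unfurling of \cref{thm:unfurling} can be promoted from a construction of a single object to an equivalence of \icats{} between functors $\xF^{\op}\to\CatI$ valued in left-adjointable maps satisfying base change and their Beck--Chevalley extensions along $\xF^{\op}\hookrightarrow\Span(\xF)$; this is the content of the unfurling results of \cite{HHLN2} and \cite{BarwickMackey}, and can alternatively be accessed through the left adjoint to $\Span$ given by the twisted arrow construction. Granting it, both $M$ and $\cC^{\amalg}$ are recovered from the common restriction $\Phi$, so $M\simeq\cC^{\amalg}$; moreover the space of such identifications is contractible, which is the strong form of uniqueness asserted, i.e.\ the space of cocartesian structures with underlying \icat{} $\cC$ is contractible.
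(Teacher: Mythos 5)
Your duality reduction and your existence argument are fine (the latter is essentially the paper's own \cref{propn:spancocart}), but your uniqueness step has a genuine gap, and it sits exactly where you suspect. The claim you need --- that restriction along $\xF^{\op}\hookrightarrow\Span(\xF)$ is an equivalence from product-preserving functors $\Span(\xF)\to\CatI$ sending forward maps to left adjoints of the backward maps, onto left-adjointable Beck--Chevalley functors $\xF^{\op}\to\CatI$ --- is \emph{not} the content of the unfurling results of \cite{BarwickMackey} and \cite{HHLN2}. Those results are purely constructive: given $\Phi$ they produce \emph{one} extension, but they say nothing about whether an arbitrary functor out of $\Span(\xF)$ with the stated adjointness property is equivalent to the unfurling of its restriction, nor about the space of such extensions. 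Knowing $M|_{\xF^{\op}}\simeq\Phi$ and knowing that each $M(f_{\mathrm{fwd}})$ is \emph{a} left adjoint of $\Phi(f)$ pins down $M$ on objects and morphisms, but not the coherence data of the functor $M$; supplying that is precisely the hard part, and it is the ($\infty$,2)-categorical universal property of spans (Gaitsgory--Rozenblyum/Macpherson), which moreover concerns 2-functors $\SPAN(\xF)\to\CATI$ and does not formally descend to the statement about ($\infty$,1)-functors into $\CatI$ that you need. The paper itself flags this: the remark after \cref{thm:unfurling} says only that the authors ``expect'' the unfurled fibration to agree with the universal-property extension. Your proposed alternative via the twisted-arrow left adjoint to $\Span$ also cannot work: that adjunction describes functors \emph{into} span $\infty$-categories, not functors out of them.

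Two smaller points. First, your appeal to \cref{propn:spancocart} to conclude that an \emph{arbitrary} cocartesian structure $M$ sends forward maps to left adjoints is circular: as \cref{rk:cocart-adj} indicates, that ``converse'' direction is deduced by combining \cref{propn:spancocart} with the uniqueness statement you are trying to prove (one can instead argue directly, along the lines of \cite{HA}*{\S 2.4.1}, that in any cocartesian structure the forward maps act by fibrewise coproducts, which are left adjoint to the restrictions). Second, for context: the paper does not prove this theorem at all --- it is Lurie's, and the proof is the citation \cite{HA}*{2.4.1.8 and 2.4.3.12}, where uniqueness is established by a direct $\infty$-operadic argument rather than by any reconstruction principle for functors out of $\Span(\xF)$. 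So your outline is a reasonable research plan, but as a proof it delegates its crux to results that do not contain it.
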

\begin{proof}
  This is part of \cite{HA}*{2.4.1.8 and 2.4.3.12}.
\end{proof}

In the cartesian case, we can further describe lax symmetric monoidal functors to $\cC^{\times}$ in terms of certain functors to $\cC$:
\begin{defn}\label{defn:monoid}
  Suppose $\cD^{\otimes} \to \Span(\xF)$ is a symmetric
  monoidal \icat{} and $\cC$ is an \icat{} with finite
  products. A \emph{$\cD^{\otimes}$-monoid} in $\cC$
  is a functor
  \[ M \colon \cD^{\otimes} \to \cC\]
  such that for every object $X \in \cD^{\otimes}$ over $\bfn{n} \in
  \Span(\xF)$, the map
  \[ M(X) \isoto \prod_{i=1}^{n} M(X_i)\]
  is an equivalence, where $X\to X_i$ for $i=1,\dots,n$ denotes a cocartesian lift of the span
  \[
    \textbf{n}\xleftarrow{\;i\;}\bfone\xrightarrow{\;=\;}\bfone.
  \]
  We write
  $\Mon_{\cD^{\otimes}}(\cC)$ for the full subcategory
  of $\Fun(\cD^{\otimes}, \cC)$ spanned by the $\cD^{\otimes}$-monoids.
\end{defn}

\begin{remark}
    By \cite{anmnd2}*{2.3.3}, we can equivalently
  characterize $\cD^{\otimes}$-monoids in $\cC$ as functors
  $\cD^{\otimes} \to \cC$ that preserve finite products.
\end{remark}

\begin{thm}[Lurie]\label{thm:lsm=monoid}
  Suppose $\cC$ is an \icat{} with finite products. Then there is a
  functor $\cC^{\times} \to \cC$ that induces an equivalence
  \[ \FunLSM(\mathcal{D}^{\otimes},
    \cC^{\times}) \isoto
    \Mon_{\mathcal{D}^{\otimes}}(\cC)\]
  between lax symmetric monoidal functors and monoids.
\end{thm}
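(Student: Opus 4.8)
The plan is to write down the functor $\cC^{\times}\to\cC$ of the statement by hand, check that postcomposition with it lands in monoids, and then establish the equivalence by a computation of mapping spaces. First I would make the cartesian structure of \cref{thm:cartuniv} concrete: the cocartesian fibration $\cC^{\times}\to\Span(\xF)$ has fibre $\cC^{\times n}$ over $\bfn{n}$, a backwards map acts by the corresponding reindexing of tuples, and a forwards (active) map acts by taking products over its fibres. Using this I define the functor $U\colon\cC^{\times}\to\cC$ to be the \emph{total product}, sending an object $(C_{1},\dots,C_{n})$ lying over $\bfn{n}$ to $\prod_{i=1}^{n}C_{i}$; concretely $U$ is the value over $\bfone$ of the cocartesian pushforward along the fold maps $\bfn{n}\to\bfone$, and on a morphism lying over a span $\bfn{n}\xfrom{\alpha}\bfn{k}\xto{\beta}\bfn{m}$, which amounts to a family of maps $\prod_{j\in\beta^{-1}(l)}C_{\alpha(j)}\to D_{l}$, it returns the composite $\prod_{i}C_{i}\to\prod_{j}C_{\alpha(j)}\to\prod_{l}D_{l}$ in which the first map reindexes along $\alpha$ and the second is the product of the given maps. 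The claim to be proved is then that postcomposition $F\mapsto U\circ F$ restricts to an equivalence $\FunLSM_{/\Span(\xF)}(\cD^{\otimes},\cC^{\times})\isoto\Mon_{\cD^{\otimes}}(\cC)$.

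To organise the comparison I would first invoke the two reformulations recorded above: by \cite{anmnd2} a lax symmetric monoidal functor $\cD^{\otimes}\to\cC^{\times}$ is the same as a functor over $\Span(\xF)$ that preserves finite products, while a $\cD^{\otimes}$-monoid is the same as a finite-product-preserving functor $\cD^{\otimes}\to\cC$. Finite products in the total \icat{} $\cC^{\times}$ exist and are computed by concatenating tuples over the disjoint union in $\Span(\xF)$, so the product of $C$ over $\bfn{n}$ and $D$ over $\bfn{m}$ is the concatenated tuple over $\bfn{n}\sqcup\bfn{m}$, and the empty tuple over $\bfn{0}$ is terminal. Consequently $U$ preserves finite products, since it sends the concatenated tuple to $(\prod_{i}C_{i})\times(\prod_{l}D_{l})$ and the empty tuple to the terminal object. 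Thus $F\mapsto U\circ F$ is a well-defined functor between the \icats{} of finite-product-preserving functors, and the theorem reduces to showing it is an equivalence.

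Next I would pin down the behaviour on objects. If $F$ is lax symmetric monoidal and $X$ lies over $\bfn{n}$, then inert-preservation sends the inert maps $X\to\rho_{i,!}X$ to coordinate projections in $\cC^{\times}$, so the $i$-th entry of $F(X)$ is $F(\rho_{i,!}X)$ and hence $U F(X)=\prod_{i}F(\rho_{i,!}X)=\prod_{i}UF(\rho_{i,!}X)$; since $U$ is the identity on the fibre over $\bfone$, this is exactly the Segal condition of \cref{defn:monoid}, so $UF$ is a monoid. Reading this backwards produces the candidate inverse: to a monoid $M$ assign the section $\widehat{M}$ with $\widehat{M}(X)=(M(\rho_{1,!}X),\dots,M(\rho_{n,!}X))$, which is inert-preserving by construction and satisfies $U\widehat{M}(X)=\prod_{i}M(\rho_{i,!}X)\simeq M(X)$ by the monoid condition, and dually $\widehat{UF}\simeq F$. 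This already shows that $F\mapsto U\circ F$ is essentially surjective onto the monoids and bijective on equivalence classes of objects.

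The main obstacle is to promote this objectwise inverse into an equivalence of \icats{}, i.e.\ to establish full faithfulness coherently and to upgrade $\widehat{(-)}$ to an honest functor. I would deduce it from a computation of mapping spaces. A fibrewise morphism $F(X)\to G(X)$ in $\cC^{\times n}$ is an $n$-tuple of morphisms $F(\rho_{i,!}X)\to G(\rho_{i,!}X)$ in $\cC$, so a natural transformation $F\Rightarrow G$ of lax symmetric monoidal functors is determined by its components on the fibre $\cD^{\otimes}_{\bfone}$ together with compatibility with the remaining, active, morphisms of $\cD^{\otimes}$; over $\bfone$ the functor $U$ is the identity, and comparing the two sets of compatibility constraints identifies this data with that of a natural transformation $UF\Rightarrow UG$. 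The genuinely technical point, which I expect to be the heart of the argument, is precisely this compatibility with active morphisms: one must check that the structure maps $U$ assigns to active spans, built from the fold maps $\bfn{n}\to\bfone$, match the product maps through which the finite-product-preserving $F$ and $G$ factor. It is here that one uses that $U$ is the total product rather than a coordinate projection, since a projection would collapse every active map to a single factor and could not recover the multiplication of the associated monoid; the finite-product-preservation of $F$ and $G$ then supplies exactly the coherence required. Reducing along the inert--active factorization of $\Span(\xF)$ to the single elementary object $\bfone$ completes the identification.
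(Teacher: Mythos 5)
The paper itself does not prove this statement directly: its proof is a citation of \cite{HA}*{\S 2.4.1}, translated from $\xF_{*}$ to $\Span(\xF)$ through \cite{envelopes}. Your proposal is therefore an attempt at a genuinely different (direct) route, but it has a gap at its very first step, and the gap is precisely the point the citation is covering. In the $\infty$-categorical setting you cannot define the functor $U\colon \cC^{\times}\to\cC$ by prescribing its values on objects and morphisms; a coherent construction is required. The one coherent description you offer---``the value over $\bfone$ of the cocartesian pushforward along the fold maps $\nabla_{n}\colon \bfn{n}\to\bfone$''---does not define a functor, because the fold maps are not natural in $\Span(\xF)$: for a span $\sigma = (\bfn{n}\xfrom{\alpha}\bfn{k}\xto{\beta}\bfn{m})$ one computes
\[ \nabla_{m}\circ\sigma \;=\; \bigl(\bfn{n}\xfrom{\alpha}\bfn{k}\to\bfone\bigr), \]
which differs from $\nabla_{n}=(\bfn{n}=\bfn{n}\to\bfone)$ whenever $\alpha$ is not invertible. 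The discrepancy between $(\nabla_{m}\circ\sigma)_{!}$ and $(\nabla_{n})_{!}$ is exactly the reindexing map $\prod_{i}C_{i}\to\prod_{j}C_{\alpha(j)}$ that you insert by hand: $U$ is only \emph{laxly} compatible with the pushforwards, and encoding this lax structure coherently is the actual mathematical content of the theorem. In \cite{HA} this is what the auxiliary $\infty$-category $\widetilde{\cC}^{\times}$ is for: there $\cC^{\times}$ is realized as a full subcategory of an $\infty$-category whose objects over $\langle n\rangle$ are subset-indexed diagrams in $\cC$, so that the total product becomes an honest (evaluation) functor.

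The same issue recurs in your remaining steps: the candidate inverse $\widehat{M}$ is again specified only objectwise, and the full-faithfulness argument---that a transformation of lax symmetric monoidal functors ``is determined by its components over $\bfone$ together with compatibility with the active morphisms''---is $1$-categorical reasoning. In the $\infty$-setting such generators-and-relations descriptions of functors and natural transformations are exactly what must be proved, typically by exhibiting the relevant functors as (relative) Kan extensions, which is in essence how the arguments of \cite{HA}*{\S 2.4.1} proceed. What is correct and salvageable in your outline is the reduction of both sides to product-preserving functors via \cite{anmnd2}, and the objectwise identification of $U$, of its inverse, and of the Segal conditions; but to turn this into a proof you must either carry out the coherence constructions (say, via subset-indexed diagrams, or via the span model $\cC_{\times}\simeq \Span_{\cart}(\cC_{\xF})^{\op}$ of \cref{lem:Span(CF)cart}), or do as the paper does and quote \cite{HA}*{\S 2.4.1} together with the $\xF_{*}$--$\Span(\xF)$ dictionary of \cite{envelopes}.
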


While we will never need to know the functor $\cC^\times\to\cC$ explicitly, let us record its effect on objects for motivational purposes: it is given by sending a tuple $(X_1,\dots,X_n)\in\cC^{\times n}\simeq(\cC^\times)_\textbf{n}$ to the product $X_1\times\cdots\times X_n$.

\begin{proof}
  This is the content of \cite{HA}*{\S 2.4.1}, translated through the
  equivalence between \iopds{} over $\xF_{*}$ and $\Span(\xF)$ from
  \cite{envelopes}.
\end{proof}

Applied to $\mathcal{D}^\otimes = \ast^\otimes$, the previous theorem gives the following corollary:

\begin{cor}
Let $\cC$ be an \icat{} with finite products. Then the functor $\cC^{\times} \to \cC$ from Theorem~\ref{thm:lsm=monoid} induces an equivalence $\CAlg(\cC^\times)\simeq \CMon(\cC)$.
\end{cor}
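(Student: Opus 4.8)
The plan is to obtain this corollary as the special case $\cD^{\otimes} = *^{\otimes} = \Span(\xF)$ of \Cref{thm:lsm=monoid}. Substituting $\cD^{\otimes} = \Span(\xF)$ into that theorem, the functor $\cC^{\times} \to \cC$ induces an equivalence
\[ \FunLSM_{/\Span(\xF)}(\Span(\xF), \cC^{\times}) \isoto \Mon_{\Span(\xF)}(\cC), \]
and by definition the left-hand side is precisely $\CAlg(\cC^{\times})$, since a commutative algebra in $\cC^{\times}$ is nothing but a lax symmetric monoidal functor out of $*^{\otimes} = \Span(\xF)$. Thus the entire content reduces to identifying the target $\Mon_{\Span(\xF)}(\cC)$ with $\CMon(\cC)$.

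For this identification I would first observe that both $\Mon_{\Span(\xF)}(\cC)$ and $\CMon(\cC)$ are full subcategories of the \emph{same} functor \icat{} $\Fun(\Span(\xF), \cC)$: this uses the identification $*^{\otimes} \simeq \Span(\xF)$, under which the structural cocartesian fibration $*^{\otimes} \to \Span(\xF)$ is (equivalent to) the identity, so a functor $*^{\otimes} \to \cC$ is just a functor $\Span(\xF) \to \cC$. It then suffices to check that the two defining conditions coincide. The quickest route is to invoke the remark following \Cref{defn:monoid}, according to which $\Span(\xF)$-monoids are exactly the product-preserving functors $\Span(\xF) \to \cC$, which is the definition of $\CMon(\cC)$ in \Cref{def:cmon}.

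Alternatively, and more explicitly, I would unwind the Segal condition of \Cref{defn:monoid} by hand. Over the identity fibration an object $X$ lying above $\bfn{n}$ is just $\bfn{n}$ itself, and the cocartesian pushforward of $\bfn{n}$ along the inert map $\rho_{i}$ (the span $\bfn{n} \gets \{i\} \xto{\sim} \bfone$, i.e.\ a morphism $\bfn{n} \to \bfone$ in $\Span(\xF)$) is $\bfone$, so that $\rho_{i,!}X \simeq \bfone$. Hence the monoid condition $M(X) \isoto \prod_{i=1}^{n} M(\rho_{i,!}X)$ becomes exactly $M(\bfn{n}) \isoto \prod_{i=1}^{n} M(\bfone)$, which is the product-preservation (Segal) condition characterizing commutative monoids. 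This shows $\Mon_{\Span(\xF)}(\cC) = \CMon(\cC)$ as full subcategories, and together with the first paragraph yields the desired equivalence $\CAlg(\cC^{\times}) \simeq \CMon(\cC)$.

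The only point requiring (minor) care is the computation $\rho_{i,!}X \simeq \bfone$, i.e.\ recognizing that for the identity cocartesian fibration $*^{\otimes} = \Span(\xF) \to \Span(\xF)$ the pushforwards along the inert projections simply read off the one-element summands; everything else is bookkeeping that matches the conventions of \Cref{def:cmon} and \Cref{defn:monoid}. In particular no input beyond \Cref{thm:lsm=monoid} is needed, which is exactly why the statement is packaged as a corollary rather than an independent theorem.
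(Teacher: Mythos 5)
Your proposal is correct and matches the paper's (implicit) argument: the paper states this corollary simply as the specialization of \Cref{thm:lsm=monoid} to $\cD^{\otimes} = *^{\otimes} = \Span(\xF)$, with the identification $\Mon_{\Span(\xF)}(\cC) \simeq \CMon(\cC)$ left to the remark that $\cD^{\otimes}$-monoids are exactly the product-preserving functors. Your explicit unwinding of the Segal condition (computing $\rho_{i,!}X \simeq \bfone$ over the identity fibration) is a correct, slightly more detailed version of the same step.
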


\section{Commutative semirings and spans}\label{sec:crig}
Our goal in this section is to give an explicit description of
commutative semirings, defined as commutative algebras in the
symmetric monoidal structure on commutative monoids constructed by
Gepner--Groth--Nikolaus. We first obtain a concrete construction of
the fibration $\Span(\xF)^{\otimes} \to \Span(\xF)$ for the symmetric
monoidal structure on spans induced by the cartesian product of finite
sets, by first describing (co)cartesian symmetric monoidal structures
in terms of spans in \S\ref{sec:cartspan} and then describing
symmetric monoidal structures on \icats{} of spans in
\S\ref{sec:spansymmon}. In \S\ref{sec:dayconv} we then prove, following
Ben-Moshe and Schlank, that the symmetric monoidal structure on
commutative monoids is a localization of the Day convolution for this
symmetric monoidal structure on $\Span(\xF)$; from this we then get
the desired description of commutative semirings in $\cC$ as
lax symmetric monoidal functors from $\Span(\xF)$.

\subsection{(Co)cartesian symmetric monoidal structures via spans}\label{sec:cartspan}
In this subsection we will see that the cocartesian fibration for a
cocartesian symmetric monoidal structure can be described in terms of
spans. Moreover, so can the cartesian fibration for a cartesian
symmetric monoidal structure, in a way that is \emph{not} simply dual
to the first description.\footnote{Such a dual description does also
  exist, but this is in terms of \emph{co}spans rather than spans.}
\begin{propn}\label{propn:spancocart}
  Suppose $\cC$ is an \icat{} with finite coproducts, and let
  \[p \colon \cC_{\xF} \to \xF\] denote the cartesian fibration for the
  functor
  \[
    \cC^{(-)}\colon \xF^{\op}\rightarrow \Cat_\infty, \quad S \mapsto \Fun(S, \cC).
  \]
  Then $\Span_{\cart}(\cC_{\xF})^{\op} \to \Span(\xF)^{\op}\simeq \Span(\xF)$ is
  the cocartesian fibration for the cocartesian symmetric monoidal
  structure on $\cC$.
\end{propn}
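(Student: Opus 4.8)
The plan is to exhibit $\Span_{\cart}(\cC_{\xF})^{\op} \to \Span(\xF)$ as an instance of Barwick's unfurling construction (\cref{thm:unfurling}) and then to pin down the resulting symmetric monoidal structure by appealing to the uniqueness of cocartesian structures (\cref{thm:cartuniv}). Concretely, I would apply \cref{thm:unfurling} to $\cC = \xF$ (which has pullbacks) and to the functor $\Phi = \cC^{(-)}\colon \xF^{\op} \to \CatI$. For a map $f\colon S \to T$ of finite sets the restriction functor $f^{*} = \cC^{(f)}\colon \cC^{T} \to \cC^{S}$ admits a left adjoint $f_{!}$, namely left Kan extension; as the fibres of $f$ are finite and $\cC$ has finite coproducts this is computed fibrewise by $(f_{!}\phi)_{t} \simeq \coprod_{s \in f^{-1}(t)} \phi_{s}$. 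The Beck--Chevalley condition for a pullback square of finite sets then amounts to the compatibility of these fibrewise coproducts with base change, which holds because for such a square the map on fibres induced by pullback is a bijection. Thus \cref{thm:unfurling} produces a cocartesian fibration $\Span_{\cart}(\cC_{\xF})^{\op} \to \Span(\xF)$ classified by a functor $F\colon \Span(\xF) \to \CatI$ that restricts to $\cC^{(-)}$ on backward maps and sends each forward map $f$ to $f_{!}$.

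Next I would check that $F$ genuinely defines a symmetric monoidal \icat{}, i.e.\ is product-preserving in the sense of \cref{def:cmon}. On objects $F(\bfn{n}) \simeq \cC^{n}$, and the inert map $\bfn{n} \to \bfone$ associated to a summand inclusion acts, via the restriction to backward maps, by the corresponding projection $\cC^{n} \to \cC$; hence the Segal map $F(\bfn{n}) \to \prod_{i} F(\bfone)$ is an equivalence and the underlying \icat{} $F(\bfone)$ is $\cC$.

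It then remains only to identify this symmetric monoidal structure with the cocartesian one. By the uniqueness in \cref{thm:cartuniv} it is enough to verify the two conditions of \cref{defn:cartsymmon}, and for this I would use the criterion recorded in \cref{rk:cocart-adj}: it suffices that the forward maps $\cC(\bfn{0} \to \bfn{1})$ and $\cC(\bfn{2} \to \bfn{1})$ are left adjoints. But every forward map of $F$ acts by some $f_{!}$, which is a left adjoint by construction; explicitly $\bfn{0} \to \bfn{1}$ gives the empty coproduct, so the unit is initial, while the fold map $\bfn{2} \to \bfn{1}$ gives $(X,Y) \mapsto X \amalg Y$. Therefore $F$ is cocartesian, and uniqueness identifies it with $\cC^{\amalg}$, proving the proposition.

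The only real difficulties I anticipate are bookkeeping ones: matching the forward/backward conventions of $\Span(\xF)$ against the variance of the (co)restriction functors $f^{*}$ and $f_{!}$, and phrasing the Beck--Chevalley equivalence for finite sets precisely enough that \cref{thm:unfurling} applies verbatim. The genuinely load-bearing observation is simply that the fibrewise left adjoints $f_{!}$ are the coproduct-over-fibres functors, which is exactly what forces the structure to be cocartesian rather than something else.
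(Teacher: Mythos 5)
Your proposal is correct and takes essentially the same route as the paper's proof: both apply Barwick's unfurling theorem (\cref{thm:unfurling}) to $\cC^{(-)}$, note that the restriction of the resulting functor $\Span(\xF)\to\CatI$ to the backward maps recovers $\cC^{(-)}$ (so the Segal condition holds and one gets a symmetric monoidal \icat{} with underlying \icat{} $\cC$), and conclude via \cref{rk:cocart-adj} that the structure is cocartesian because the forward maps act by the left adjoints $f_{!}$. The details you spell out --- the fibrewise coproduct formula for $f_{!}$, the Beck--Chevalley check for pullbacks of finite sets, and the final appeal to uniqueness from \cref{thm:cartuniv} --- are precisely the steps the paper leaves implicit.
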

\begin{proof}
  Because $\cC$ admits coproducts, the functor
  $\cC^{(\blank)}\colon \xF^{\op}\rightarrow \Cat$ satisfies the assumptions of
  Theorem~\ref{thm:unfurling} (see e.g.~\cite{HopkinsLurie}*{4.3.3} for the Beck--Chevalley condition), showing that $(\cC_{\xF},\cC_{\xF,\cart})$ is a span
  pair and that the functor
  \[ \Span_{\cart}(\cC_{\xF})^{\op} \to \Span(\xF)\] is a cocartesian fibration.

  Furthermore, as explained there, the restriction of the
  corresponding functor $\Span(\xF)\rightarrow \Cat$ to
  $\xF^{\op}\subset \Span(\xF)$ agrees with the original functor
  $\cC^{(-)}$ (and so $\Span_{\cart}(\cC_{\xF})^{\op}\to\Span(\xF)$ defines a symmetric
  monoidal structure on $\cC$), while the restriction to
  $\xF\subset \Span(\xF)$ agrees with the functor obtained from
  $\cC^{(-)}$ by passing to left adjoints. In particular, it is
  cocartesian monoidal by Remark~\ref{rk:cocart-adj}.
\end{proof}

We have a similarly explicit description of the
cartesian fibration for a cartesian symmetric monoidal structure:
\begin{propn}\label{lem:Span(CF)cart}
  Suppose $\cC$ is an \icat{} with finite products. Then
  \[\Span(p)^\op\colon \Span_{\cart}(\cC_{\xF})^{\op} \to \Span(\xF)^{\op}\] is a
  cartesian fibration, which classifies the cartesian symmetric
  monoidal structure on $\cC$. That is,
  $\Span_{\cart}(\cC_{\xF})^{\op} \simeq \cC_{\times}$.
\end{propn}

Note that this in particular means that if $\cC$ has both coproducts and products, then the \emph{cartesian} fibration for the cartesian symmetric monoidal structure and the \emph{cocartesian} fibration for the cocartesian structure differ only by postcomposition with the canonical equivalence $\Span(\xF)^\op\simeq\Span(\xF)$. This observation is in fact crucial for our proof:

\begin{proof}
  Assume first that $\cC$ in addition has finite coproducts, so we can apply
  Proposition~\ref{propn:spancocart} to describe the cocartesian unstraightening
  of the \emph{co}cartesian symmetric monoidal structure on $\cC$ as the composite
  \[
    q\colon \Span_{\cart}(\cC_{\xF})^\op\xrightarrow{\Span_F(p)^\op}\Span(\xF)^\op\isoto\Span(\xF).
  \]
    The corresponding functor $\Span(\xF)\to\CatI$ then sends a span
  \begin{equation}\label{eq:generic-span}
    S\xleftarrow{g} X\xrightarrow{f}T
  \end{equation}
  to the composite
  \[
    \Fun(S,\cC)\xrightarrow{g^*} \Fun(X,\cC)\xrightarrow{f_{\kern.5pt!}} \Fun(T,\cC),
  \]
  where $f_{\kern.5pt!}$ is left adjoint to $f^*$. Because $\cC$ also admits finite products, each of these functors is a left adjoint,
  so $q\colon\Span_{\cart}(\cC_{\xF})^{\op} \to\Span(\xF)$ is also a cartesian
  fibration. We may now straighten it \emph{as a cartesian fibration}, yielding the functor $\Span(\xF)^{\op}\rightarrow \CatI$ obtained by passing to
  right adjoints, i.e.~sending the generic span (\ref{eq:generic-span}) to $g_*f^*\colon\Fun(T,\cC)\to\Fun(S,\cC)$, where $g_*$ is right adjoint to $g^*$. It follows that also the composite $\Span_{\cart}(\cC_{\xF})^\op \xrightarrow{\smash{q}} \Span(\xF)\simeq \Span(\xF)^{\op}$ (which is just $\Span(p)^{\op}$) is a cartesian fibration, whose cartesian straightening $F\colon\Span(\xF)\to\CatI$ then sends a span (\ref{eq:generic-span}) to $f_*g^*$ (the effect of the reversed span). The restriction of $F$ to $\xF^\op$ is the original functor $\Fun(-,\cC)\colon\xF^\op\to\CatI$ as we have taken left adjoints and then right adjoints. Thus, $F$ also preserves products, as this condition only
  depends on the backwards maps, and so
  $\Span_{\cart}(\cC_{\xF}) \to\Span(\xF)$ is the cartesian
  unstraightening of a symmetric monoidal $\infty$-category. By the
  dual of Remark~\ref{rk:cocart-adj} it is moreover cartesian. This
  completes the proof under the additional assumption that $\cC$ also
  has finite coproducts.

  For general $\cC$, let $\mathcal D$ denote the finite-coproduct-completion
  of $\cC$, \ie{} the full subcategory of the \icat{} of presheaves on $\cC$ spanned
  by finite coproducts of representables. As $\cC$ has finite products, and since coproducts
  and products distribute in spaces, $\mathcal D$ also has finite products, and the Yoneda
  embedding $\cC\hookrightarrow\mathcal D$ preserves finite products.

  By the above special case, $\Span_{\cart}(\mathcal D_{\xF})^\op\to\Span(\xF)$
  is then the cartesian fibration for the cartesian symmetric monoidal on on $\mathcal D$.
  As the inclusion $\cC\hookrightarrow\mathcal D$ preserves products, it extends to
  a fully faithful functor $\cC_\times\hookrightarrow\mathcal D_\times=\Span_{\cart}(\mathcal D_{\xF})^{\op}$ with essential image those
  tuples $(X_1,\dots,X_n)$ such that all $X_i$ are contained in $\cC$. As the
  natural inclusion $\Span_{\cart}(\cC_{\xF})^\op\to\Span_{\cart}(\mathcal D_{\xF})^\op$ is also
  fully faithful with the same essential image, the claim follows.
\end{proof}

We will now specialize these results to the cocartesian and cartesian symmetric monoidal structures on $\xF$:

\begin{cor}\label{cor:Fcoprod}
  Let $\Ar(\xF)_{\pb}$ denote the subcategory of $\Ar(\xF)$ whose
  morphisms are the commutative squares that are pullbacks.  Then
  \[\ev_{1} \colon \Span_{\pb}(\Ar(\xF))^{\op} \to \Span(\xF)\]
  is the cocartesian fibration for the cocartesian symmetric monoidal
  structure on $\xF$.
\end{cor}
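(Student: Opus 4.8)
The plan is to obtain this as the special case $\cC=\xF$ of \cref{propn:spancocart}. That proposition produces the cocartesian fibration for the cocartesian symmetric monoidal structure on $\xF$ as $\Span_{\cart}(\xF_{\xF})^{\op}\to\Span(\xF)$, where $\xF_\xF\to\xF$ is the cartesian fibration for $\xF^{(-)}\colon\xF^{\op}\to\Cat_\infty$, $S\mapsto\Fun(S,\xF)$, and $\xF_{\xF,\cart}$ is its subcategory of $p$-cartesian edges. So everything reduces to identifying the pair $(\xF_\xF,\xF_{\xF,\cart})$ with $(\Ar(\xF),\Ar(\xF)_{\pb})$ over $\xF$, compatibly with the evaluation maps to $\xF$. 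Granting this, the functor $\Span(\ev_1)^{\op}$ (abbreviated $\ev_1$ in the statement, since $\Span(\xF)^{\op}\simeq\Span(\xF)$ is the identity on objects) is exactly the claimed fibration.

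First I would recall the standard description of the codomain fibration: since $\xF$ has pullbacks, $\ev_1\colon\Ar(\xF)\to\xF$ is a cartesian fibration whose $\ev_1$-cartesian morphisms are precisely the commutative squares that are pullbacks, and which classifies the functor $S\mapsto\xF_{/S}$ with transition maps given by pullback. By definition its subcategory of cartesian edges is $\Ar(\xF)_{\pb}$. It therefore remains to identify the classifying functor $S\mapsto\xF_{/S}$ with $S\mapsto\Fun(S,\xF)$.

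For this I would use that $S$ is a finite \emph{set}: sending a finite set over $S$ to its tuple of fibres gives an equivalence $\xF_{/S}\isoto\prod_{s\in S}\xF\simeq\Fun(S,\xF)$, and for $f\colon T\to S$ the fibre over $t$ of the pullback of $X\to S$ along $f$ is the fibre of $X$ over $f(t)$, so this equivalence intertwines pullback with restriction along $f$. Since both sides and their functoriality are entirely $1$-categorical, promoting this to an equivalence of functors $\xF^{\op}\to\Cat_\infty$ and unstraightening it yields an equivalence $\Ar(\xF)\simeq\xF_\xF$ over $\xF$ matching cartesian edges. The only genuine point of care — and the main, though mild, obstacle — is precisely this coherent naturality: one must check that the fibrewise equivalence assembles into a map of cartesian fibrations, not merely a collection of fibrewise equivalences. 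I would handle this by appealing to the standard codomain-fibration identification over a $1$-category, or by writing down the comparison functor $\Ar(\xF)\to\xF_\xF$ directly and verifying that it is a fibrewise equivalence over $\xF$. After that, substitution into \cref{propn:spancocart} finishes the proof.
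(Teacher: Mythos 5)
Your proposal is correct and follows essentially the same route as the paper: both reduce via \cref{propn:spancocart} to identifying $\ev_{1}\colon \Ar(\xF)\to\xF$ (with its pullback squares as cartesian edges) as the cartesian fibration for $\xF^{(-)}\colon\xF^{\op}\to\CatI$, and both do so by identifying the codomain fibration's classifying functor $S\mapsto\xF_{/S}$ with $S\mapsto\Fun(S,\xF)$ compatibly with pullback/restriction. The naturality point you flag is exactly what the paper dispatches by invoking the \emph{natural} straightening equivalence $\xF_{/S}\simeq\Fun(S,\xF)$, under which pullback corresponds to composition.
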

\begin{proof}
  By Proposition~\ref{propn:spancocart} it suffices to show that $\ev_{1} \colon
  \Ar(\xF) \to \xF$ is the cartesian fibration for the functor
  $\xF^{(\blank)} \colon \xF^{\op} \to \CatI$. But $\ev_{1}$ is the cartesian fibration for $S \mapsto \xF_{/S}$, with
  functoriality given by basechange, so this follows from the natural
  straightening equivalence $\xF_{/S} \simeq \Fun(S, \xF)$, under
  which pullbacks correspond to compositions.
\end{proof}

In the same way we get:

\begin{cor}\label{cor:Ftimesspan}
  There is an equivalence of cartesian fibrations
  \[ \Span_{\pb}(\Ar(\xF))^{\op} \simeq \xF_{\times}\] over
  $\Span(\xF)^{\op}$ (using evaluation at $1$ in the arrow category).
\end{cor}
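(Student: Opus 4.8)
The final statement (Corollary \ref{cor:Ftimesspan}) asserts an equivalence of cartesian fibrations over $\Span(\xF)^{\op}$:
$$\Span_{\pb}(\Ar(\xF))^{\op} \simeq \xF_{\times}$$
using $\ev_1$ as the structure map.

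We have two immediately preceding results:
- **Corollary \ref{cor:Fcoprod}**: $\ev_1 : \Span_{\pb}(\Ar(\xF))^{\op} \to \Span(\xF)$ is the cocartesian fibration for the *cocartesian* symmetric monoidal structure on $\xF$.
- **Proposition \ref{lem:Span(CF)cart}**: For any $\cC$ with finite products, $\Span_{\cart}(\cC_{\xF})^{\op} \to \Span(\xF)^{\op}$ is the cartesian fibration for the cartesian symmetric monoidal structure, i.e., $\Span_{\cart}(\cC_{\xF})^{\op} \simeq \cC_\times$.

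**The plan:**

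Apply Proposition \ref{lem:Span(CF)cart} with $\cC = \xF$. Since $\xF$ has finite products, this immediately gives $\Span_{\cart}((\xF)_{\xF})^{\op} \simeq \xF_\times$ as cartesian fibrations over $\Span(\xF)^{\op}$. So the content is to identify $\Span_{\cart}((\xF)_{\xF})^{\op}$ with $\Span_{\pb}(\Ar(\xF))^{\op}$ as fibrations.

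The key observation is that $\Span_{\pb}(\Ar(\xF))^{\op}$ and $\Span_{\cart}((\xF)_{\xF})^{\op}$ have the *same total space* — indeed, Corollary \ref{cor:Fcoprod} was proved by showing $\ev_1 : \Ar(\xF) \to \xF$ is precisely the cartesian fibration $(\xF)_{\xF}$ for $\xF^{(-)}$, via the straightening $\xF_{/S} \simeq \Fun(S,\xF)$. Thus $\Ar(\xF) \simeq (\xF)_{\xF}$ over $\xF$, with $\Ar(\xF)_{\pb}$ (pullback squares) matching $(\xF)_{\xF,\cart}$ (the cartesian edges). Applying the functor $\Span$ preserves this identification.

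The proof proposal I would write:

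\begin{proof}
  Applying \cref{lem:Span(CF)cart} to the \icat{} $\xF$, which
  certainly has finite products, shows that
  $\Span_{\cart}((\xF)_{\xF})^{\op} \to \Span(\xF)^{\op}$ is the
  cartesian fibration classifying the cartesian symmetric monoidal
  structure $\xF_{\times}$.  It therefore suffices to identify
  $\Span_{\cart}((\xF)_{\xF})^{\op}$ with $\Span_{\pb}(\Ar(\xF))^{\op}$
  over $\Span(\xF)^{\op}$.

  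This identification was already established in the course of proving
  \cref{cor:Fcoprod}: there we observed that
  $\ev_{1} \colon \Ar(\xF) \to \xF$ is the cartesian fibration for the
  functor $\xF^{(\blank)} \colon \xF^{\op} \to \CatI$, via the natural
  straightening equivalence $\xF_{/S} \simeq \Fun(S, \xF)$.  Hence
  $\Ar(\xF) \simeq (\xF)_{\xF}$ as cartesian fibrations over $\xF$,
  and under this equivalence the pullback squares in $\Ar(\xF)$
  correspond precisely to the $\ev_{1}$-cartesian edges, so that
  $\Ar(\xF)_{\pb} \simeq (\xF)_{\xF,\cart}$.  Applying the functor
  $\Span$ to this equivalence of span pairs and passing to opposites
  yields the desired equivalence
  \[ \Span_{\pb}(\Ar(\xF))^{\op} \simeq \Span_{\cart}((\xF)_{\xF})^{\op}
    \simeq \xF_{\times} \]
  of cartesian fibrations over $\Span(\xF)^{\op}$.
\end{proof}

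**Expected main obstacle:** The substantive work is entirely bundled into the two preceding results, so this corollary is essentially a bookkeeping matter. The only point requiring care is ensuring that the identification $\Ar(\xF) \simeq (\xF)_{\xF}$ matches the *correct* wide subcategories — that pullback squares go to cartesian edges — and that $\Span$ respects this matching of span pairs (which it does, being functorial on $\SPair$). Since these are direct consequences of the straightening equivalence already invoked in \cref{cor:Fcoprod}, there is no real analytical difficulty; the corollary is a formal combination of the cartesian description (Proposition \ref{lem:Span(CF)cart}) with the explicit fibration identification (Corollary \ref{cor:Fcoprod}).
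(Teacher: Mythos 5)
Your proposal is correct and follows essentially the same route as the paper: the paper derives this corollary by directly combining \cref{lem:Span(CF)cart} (applied to $\cC = \xF$) with the identification $\Ar(\xF) \simeq \xF_{\xF}$ established in \cref{cor:Fcoprod}, exactly as you do. Your write-up merely makes explicit the bookkeeping (pullback squares matching cartesian edges, functoriality of $\Span$ on span pairs) that the paper leaves implicit in the phrase ``combining this with \cref{cor:Fcoprod}.''
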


\subsection{Symmetric monoidal structures on spans}\label{sec:spansymmon}
In this subsection we discuss symmetric monoidal structures on span
\icats{}, and in particular on $\Span(\xF)$. These symmetric monoidal
structures were first constructed in \cite{BarwickMackey2}.

\begin{construction}\label{const:sym_mon_spans}
  Since the functor $\Span \colon \SPair \to \CatI$ preserves limits,
  and in particular finite products, it also preserves commutative
  monoids. Therefore, $\Span$ applied to any commutative monoid in
  $\SPair$ (or more generally in adequate triples) is a symmetric
  monoidal \icat{}. Here we will only consider this construction for
  objects of $\CatI^{\pb} \subseteq \SPair$. A commutative monoid in
  $\CatI^{\pb}$ is a symmetric monoidal \icat{}
  $M\colon \Span(\xF)\to \CatI$ such that
  \begin{itemize}
  \item the underlying \icat{} $\cC = M(\bfone)$ has pullbacks,
  \item the tensor product functor
    $\blank\otimes \blank \colon \cC\times \cC\rightarrow \cC$
    preserves pullbacks.\footnote{Here we mean pullbacks in
      $\cC\times \cC$, so that the tensor product of a pair of pullback squares in $\cC$ is again a pullback. This condition implies that $\otimes$ preserves pullbacks in each variable (since the pullback shape is weakly contractible), but is strictly stronger than this.}
  \end{itemize}
 In this case,
  $\Span(M) \colon \Span(\xF) \to \CatI$ is a commutative monoid,
  which endows $\Span(\cC)$ with a symmetric monoidal structure whose
  tensor product is inherited from $\cC$.
\end{construction}

\begin{ex}
  If $M\colon \Span(\xF)\to \Cat_\infty$ is the cartesian symmetric
  monoidal structure on an \icat{} $\cC$ with finite limits, then
  $ \blank \times \blank \colon \cC\times \cC\to \cC$ clearly
  preserves pullbacks, and so induces a symmetric monoidal structure
  on $\Span(\cC)$.\footnote{Note that this is \emph{not} the cartesian
    monoidal structure on $\Span(\cC)$!}  For example, we conclude
  that $\Span(\xF)$ is endowed with a symmetric monoidal structure
  given by the cartesian product in $\xF$.
\end{ex}

We can describe the cocartesian fibrations for these symmetric monoidal structures rather explicitly:
\begin{propn}\label{propn:spansmon}
  Suppose $p \colon \cC_{\otimes} \to \Span(\xF)^{\op}$ is the
  cartesian fibration for a commutative monoid in $\CatIpb$. Then the
  cocartesian fibration $\Span(\cC)^{\otimes} \to \Span(\xF)$ for the
  induced symmetric monoidal structure on spans from
  Construction~\ref{const:sym_mon_spans} is given by
  \[ \Span(\cC)^{\otimes} \simeq
    \Span_{\fw}(\cC_{\otimes}),\]
  where $(\cC_{\otimes})_{\fw}$ denotes the subcategory
  of maps that go to equivalences under $p$.
\end{propn}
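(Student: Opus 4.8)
The plan is to deduce this directly from Theorem~\ref{thm:fibforspan}, applied to the base \icat{} $\Span(\xF)^{\op}$ with the input functor taken to be $M$ itself. First I would unwind the hypothesis: by \cref{const:sym_mon_spans}, the symmetric monoidal structure on $\Span(\cC)$ is classified by the commutative monoid $\Span(M) = \Span \circ M \colon \Span(\xF) \to \CatI$, where $M\colon \Span(\xF) \to \CatIpb$ is the functor whose cartesian unstraightening is, by assumption, $p\colon \cC_{\otimes} \to \Span(\xF)^{\op}$. Thus $\Span(\cC)^{\otimes} \to \Span(\xF)$ is by definition the cocartesian fibration classified by $\Span \circ M$, and it suffices to identify this fibration.

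Next I apply Theorem~\ref{thm:fibforspan} with its base ``$\cC$'' taken to be $\Span(\xF)^{\op}$ and its functor ``$F\colon \cC^{\op} \to \CatIpb$'' taken to be $M\colon \Span(\xF) \to \CatIpb$; the associated cartesian fibration is then precisely $p\colon \cC_{\otimes} \to \Span(\xF)^{\op}$. The theorem identifies the cocartesian fibration for $\Span \circ M$ with
\[ \Span(p)\colon \Span_{\fw}(\cC_{\otimes}) \to \Span_{\eq}(\Span(\xF)^{\op}) \simeq (\Span(\xF)^{\op})^{\op} = \Span(\xF), \]
using $\Span_{\eq}(\cD) \simeq \cD^{\op}$ for $\cD = \Span(\xF)^{\op}$, and where $(\cC_{\otimes})_{\fw}$ is exactly the subcategory of morphisms sent to equivalences under $p$. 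Combining this with the previous paragraph gives the asserted equivalence $\Span(\cC)^{\otimes} \simeq \Span_{\fw}(\cC_{\otimes})$ over $\Span(\xF)$.

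I expect the only real subtlety to be the bookkeeping of opposites: one must check that the base identification $\Span_{\eq}(\Span(\xF)^{\op}) \simeq \Span(\xF)$ matches the copy of $\Span(\xF)$ over which we want the fibration, and that the cocartesian fibration produced by Theorem~\ref{thm:fibforspan} agrees with the one arising from \cref{const:sym_mon_spans}. Since both are the cocartesian unstraightening of the single functor $\Span \circ M$, this agreement is automatic; but the variance conventions are the one place where an error could creep in, so I would verify them explicitly. Beyond this point the argument is a formal consequence of the cited theorem and requires no further computation.
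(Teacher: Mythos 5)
Your proposal is correct and is essentially identical to the paper's proof, which consists of the single sentence ``This follows immediately from \cref{thm:fibforspan}''. You have simply spelled out that application explicitly --- taking the base to be $\Span(\xF)^{\op}$, the functor to be $M$, and tracking the variance so that $\Span_{\eq}(\Span(\xF)^{\op}) \simeq \Span(\xF)$ --- which is exactly the intended reading.
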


\begin{proof}
  This follows immediately from Theorem~\ref{thm:fibforspan}.
\end{proof}

\begin{remark}
  When the symmetric monoidal structure on $\cC$ is cartesian, the
  description of $\Span(\cC)^\otimes$ above shows that it agrees with the symmetric monoidal structure on $\Span(\cC)$ constructed as a special case of \cite{BarwickMackey2}*{2.14}.
\end{remark}

Combining Proposition~\ref{propn:spansmon} with the description of $\xF_{\times}$
from Corollary~\ref{cor:Ftimesspan}, we get the
following special case:
\begin{cor}\label{cor:spanFotimes}
  The composite
  \begin{multline*}
    \Bispan_{\pb,\teq}(\Ar(\xF)) =
    \Span_{\teq}(\Span_{\pb}(\Ar(\xF))^{\op})\\
    {}\xrightarrow{\ev_1}\Span_{\eq}(\Span(\xF)^\op)\simeq\Span(\xF)
  \end{multline*}
  is a cocartesian fibration, where $\Ar(\xF)_{\pb}$ denotes the subcategory of $\Ar(\xF)$ where
  the morphisms are pullback squares, and
  $\Span(\Ar(\xF))_{\teq}$ denotes the subcategory of
  morphisms whose image under $\ev_{1}$ is a span of equivalences in
  $\xF$. Moreover, this is the cocartesian unstraightening $\Span(\xF)^\otimes\to\Span(\xF)$ of the symmetric monoidal structure on $\Span(\xF)$ induced by the cartesian structure on $\cC$.
  \qed
\end{cor}

\begin{remark}
  A morphism in $\Span(\xF)^{\otimes}$ is then a diagram
  \[
    \begin{tikzcd}
      X \arrow{d} & Y \arrow{l} \arrow{r} \arrow{d} \drpullback & X' \arrow{d} & Y' \arrow{l}{\sim} \arrow{r} \arrow{d} \dlpullback & X'' \arrow{d} \\
      S & T \arrow{l}{f} \arrow{r}[swap]{g} & S' & T' \arrow{l}{\sim} \arrow{r}[swap]{\sim} & S''
    \end{tikzcd}
  \]
  which simplifies to
  \[
    \begin{tikzcd}
      X \arrow{d} & Y \arrow{l} \arrow{r} \arrow{d} \drpullback & X' \arrow{d} \arrow{r} & X'' \arrow{dl} \\
      S & T \arrow{l}{f} \arrow{r}[swap]{g} & S'.
    \end{tikzcd}
  \]
  This amounts to specifying a family of spans
  \[\prod_{t \in g^{-1}(s')} X_{f(t)} \from X'_{s'} \to X''_{s'}\]
  indexed by $s' \in S'$, as we expect.
\end{remark}

\begin{remark}\label{rk:span-F-otimes-cocart}
  For later use, let us identify some of the cocartesian edges of the cocartesian fibration $ p\colon\Bispan_{\pb,\teq}(\Ar(\xF))\to\Span(\xF)$. By construction, the restriction of the cartesian fibration $\Span_{\pb}(\Ar(\xF))^{\op}\to\Span(\xF)$ to $\xF$ is the cartesian fibration $\Ar(\xF)\to\xF$, whose cartesian arrows are precisely the pullback squares in $\xF$. Applying the characterization of cocartesian edges from \cite[3.2]{HHLN2}, we therefore see that the $p$-cocartesian lifts of a map
  \begin{equation*}
    S\xleftarrow{\;f\;} T \xrightarrow{\;=\;} T
  \end{equation*}
  in $\Span(\xF)$ are precisely given by the bispans of the form
  \[
    \begin{tikzcd}
      X \arrow{d} & Y \arrow{l} \arrow[r,equals] \arrow{d} \dlpullback & Y \arrow[r,equals] \arrow{d} & Y \arrow{d} \\
      S & T \arrow{l}{f} \arrow[r,equals] & T\arrow[r,equals] & T\rlap.
    \end{tikzcd}
  \]
\end{remark}

\begin{observation}
  Evaluation at $0$ gives a functor
  \[ \Span(\xF)^{\otimes} \simeq
    \Bispan_{\pb,\teq}(\Ar(\xF)) \to
    \Bispan(\xF).\]
  We will prove in Corollary~\ref{cor:spanFotloc} below that this is a localization.
\end{observation}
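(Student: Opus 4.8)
The plan is to obtain the functor by applying the $\Bispan$ construction to a morphism of bispan triples. Write $\Ar(\xF)_{\teq}$ for the wide subcategory of $\Ar(\xF)$ consisting of those squares whose $\ev_{1}$-component is an equivalence, so that $\Bispan_{\pb,\teq}(\Ar(\xF))$ is exactly the source appearing in \cref{cor:spanFotimes}. I would then show that $\ev_{0}\colon \Ar(\xF) \to \xF$ defines a morphism of bispan triples
\[ (\Ar(\xF), \Ar(\xF)_{\pb}, \Ar(\xF)_{\teq}) \longrightarrow (\xF, \xF, \xF), \]
where the target is the bispan triple computing $\Bispan(\xF)$, which is a valid bispan triple since $\xF$ is locally cartesian closed. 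Granting this, the functoriality of $\Span$ turns the forward-map data into a morphism of span pairs $\Span_{\pb}(\Ar(\xF))^{\op} \to \Span(\xF)^{\op}$ compatible with the $L$-structures, and applying $\Span$ a second time yields the asserted functor $\Bispan_{\pb,\teq}(\Ar(\xF)) \to \Bispan(\xF)$, manifestly induced by $\ev_{0}$.

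To verify the conditions of \cref{def:bispanmor}, the one elementary input I would use repeatedly is that limits in $\Ar(\xF) = \Fun([1], \xF)$ are computed pointwise, so that the evaluation functor $\ev_{0}$ preserves all pullbacks. This immediately makes $\ev_{0}$ a morphism of span pairs for both $(\Ar(\xF), \Ar(\xF)_{\pb})$ and $(\Ar(\xF), \Ar(\xF)_{\teq})$: it carries forward maps to maps of $\xF$ (which are all forward there) and preserves pullbacks along them. By \cref{def:bispanmor}, the only remaining condition is then the Beck--Chevalley equivalence $\ev_{0} \circ f_{*} \to \ev_{0}(f)_{*} \circ \ev_{0}$ for each $f\colon x \to y$ in $\Ar(\xF)_{\pb}$, where $f_{*}$ is the right adjoint to the pullback functor $f^{*}\colon \Ar(\xF)^{\teq}_{/y} \to \Ar(\xF)^{\teq}_{/x}$.

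The crux is the identification of these $\teq$-slices. For $y = (Y \to T)$, restricting $\ev_{0}$ gives an equivalence
\[ \ev_{0}\colon \Ar(\xF)^{\teq}_{/y} \isoto \xF_{/Y} = \xF_{/\ev_{0}(y)}, \]
whose inverse sends $(Z \to Y)$ to the composite $Z \to Y \to T$ together with the $\teq$-square over $y$ with identity bottom edge; that these are mutually inverse uses precisely that the $\ev_{1}$-leg of a $\teq$-map is an equivalence. Under these identifications $f^{*}$ becomes pullback along $\ev_{0}(f)$, i.e.\ the functor $\ev_{0}(f)^{*}$, so the square of \cref{def:bispanmor} commutes with both vertical legs equivalences and is therefore trivially right adjointable. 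This shows $\ev_{0}$ is a morphism of bispan triples and produces the functor. The main obstacle here is not computational but organizational: one must correctly pin down $\teq$ as $\Ar(\xF)_{\teq}$ and establish the slice equivalence $\Ar(\xF)^{\teq}_{/y} \simeq \xF_{/\ev_{0}(y)}$, after which every clause of \cref{def:bispanmor} holds automatically, so that via the functoriality of $\Bispan$ the statement collapses to the trivial fact that $\ev_{0}$ preserves pointwise pullbacks.
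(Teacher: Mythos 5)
Your proposal is correct and follows essentially the same route as the paper: the paper establishes this functoriality (in the greater generality of an arbitrary bispan triple $(\cC,\cC_F,\cC_L)$, with $\Ar(\cC)_\Lambda$ in place of your $\Ar(\xF)_{\teq}$) by showing $\ev_0$ is a morphism of bispan triples in the sense of \cref{def:bispanmor}, with the key input being exactly your slice identification $\Ar(\cC)^{\Lambda}_{/y}\simeq \cC^{L}_{/\ev_0(y)}$, natural with respect to pullback, which makes the adjointability condition immediate. Your specialization to $(\xF,\xF,\xF)$ and the observation that $\ev_0$ preserves pointwise pullbacks match the paper's argument step for step.
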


\subsection{Commutative monoids and Day convolution}\label{sec:dayconv}
In this subsection we briefly recall the definition of commutative
semirings from \cite{GepnerGrothNikolaus} and then study its relation
to the symmetric monoidal structure on $\Span(\xF)$.

\begin{defn}
  We say a symmetric monodial \icat{} $\cC$ is \emph{presentably symmetric
  monoidal} if $\cC$ is presentable and the tensor product
  $\blank\otimes\blank \colon \cC\times\cC \rightarrow \cC$ preserves colimits
  in each variable separately.\footnote{By the adjoint functor theorem for
  presentable \icats{}, this is equivalent to the symmetric monoidal
  structure being closed.} A presentably symmetric monoidal
  $\infty$-category is equivalently a commutative algebra in $\PrL$
  equipped with the Lurie tensor product.
\end{defn}

\begin{thm}[Gepner--Groth--Nikolaus \cite{GepnerGrothNikolaus}*{Theorem~5.1}]\label{thm:ggncmon}
  Suppose $\cC$ is a presentably symmetric monoidal \icat{}. Then
  $\CMon(\cC)$ has a unique presentably symmetric monoidal structure such that the
  free monoid functor $\cC \to \CMon(\cC)$ is
  symmetric monoidal. \qed
\end{thm}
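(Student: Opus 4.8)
The plan is to exhibit $\CMon(\Spc)$ as an \emph{idempotent} commutative algebra in the symmetric monoidal \icat{} $(\PrL, \otimes)$ and to transport its structure along the tensor product. Recall that a presentably symmetric monoidal \icat{} is precisely a commutative algebra in $(\PrL, \otimes)$; thus I must produce, naturally in such a commutative algebra $\cC$, a commutative algebra structure on $\CMon(\cC)$ together with a map of commutative algebras $\cC \to \CMon(\cC)$ whose underlying functor is the free monoid functor, and then prove that this structure is the unique one with this property.

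The first input is the tensoring formula $\CMon(\cC) \simeq \cC \otimes \CMon(\Spc)$ in $\PrL$, natural in $\cC$. Since $\CMon(\cC) = \Fun^{\times}(\Span(\xF), \cC)$ is the \icat{} of models of a finite-product theory, this is an instance of the general principle that $\Spc$-valued models tensor up to $\cC$-valued models: one verifies that $\Fun^{\times}(\Span(\xF), \cC)$ and $\cC \otimes \CMon(\Spc)$ corepresent the same functor on $\PrL$, using the universal property of the $\PrL$-tensor and the fact that $\CMon(\Spc) = \Fun^{\times}(\Span(\xF), \Spc)$ is compactly generated by corepresentables. The second and conceptually central input is idempotency: the free functor $\eta \colon \Spc \to \CMon(\Spc)$ (a map out of the $\PrL$-unit $\Spc$) exhibits $\CMon(\Spc)$ as an idempotent object, so that by Lurie's theory of idempotent algebras it carries an essentially unique commutative algebra structure. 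Tensoring $\eta$ with $\CMon(\Spc)$ and applying the tensoring formula twice identifies $\eta \otimes \id$ with a functor $\CMon(\Spc) \to \CMon(\CMon(\Spc))$, and I would check this is an equivalence by an Eckmann--Hilton argument: a second compatible commutative monoid structure on a commutative monoid must agree with the given one, so the forgetful functor $\CMon(\CMon(\Spc)) \to \CMon(\Spc)$ is an equivalence. Given these inputs, for any commutative algebra $\cC$ in $\PrL$ the object $\CMon(\cC) \simeq \cC \otimes \CMon(\Spc)$ is again a commutative algebra, and $\cC \otimes \eta$ is a map of commutative algebras $\cC \to \CMon(\cC)$ whose underlying colimit-preserving functor is the free monoid functor; this is the desired presentably symmetric monoidal structure with the free functor symmetric monoidal.

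For uniqueness, I would note that tensoring with the idempotent algebra $\CMon(\Spc)$ is a reflective (smashing) localization in $\PrL$, so $\cC \to \CMon(\cC)$ presents $\CMon(\cC)$ as a localization of $\cC$ at a class of morphisms stable under tensoring; by Lurie's uniqueness of symmetric monoidal localizations, a symmetric monoidal structure making this localization symmetric monoidal is unique once it exists, which gives the uniqueness clause. The main obstacle I anticipate is the combination of the tensoring formula with the bookkeeping in the idempotency step: one must carefully match up the free/forgetful adjunctions after tensoring so that $\eta \otimes \id$ genuinely becomes the Eckmann--Hilton comparison functor, and ensure the whole construction is natural in $\cC$ so that it applies uniformly to every presentably symmetric monoidal \icat{}.
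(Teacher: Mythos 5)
The paper offers no proof of this theorem---it is imported verbatim from \cite{GepnerGrothNikolaus}---and your proposal reconstructs essentially the argument of that reference: the tensoring formula $\CMon(\cC)\simeq \cC\otimes\CMon(\Spc)$ in $\PrL$, idempotency of $\CMon(\Spc)$ via the Eckmann--Hilton fact that $\CMon(\Spc)$ is semiadditive so that the forgetful functor $\CMon(\CMon(\Spc))\to\CMon(\Spc)$ is an equivalence (with inverse the free functor, which is what identifies $\eta\otimes\id$ after the adjunction bookkeeping you flag), and existence plus uniqueness from the resulting smashing localization of $\PrL$. One correction of wording that matters for the uniqueness step: the free functor $\cC\to\CMon(\cC)$ is \emph{not} a localization of the $\infty$-category $\cC$ at a class of its morphisms (it is not even essentially surjective), but rather the unit, at the object $\cC$, of the smashing localization $\blank\otimes\CMon(\Spc)$ of $\PrL$ itself; the uniqueness should therefore be phrased via Lurie's theory of idempotent algebras and compatible localizations applied to $(\PrL,\otimes)$ --- so that local commutative algebras under $\cC$ are reflective with unit the free functor --- exactly as in \cite{GepnerGrothNikolaus}, after which your argument goes through.
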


\begin{defn}
  A \emph{commutative semiring} in a presentably symmetric monoidal \icat{} $\cC$
  is a commutative algebra in $\CMon(\cC)$. We write
  \[ \CRig(\cC) := \CAlg(\CMon(\cC))\] for the \icat{} of commutative
  semirings in $\cC$.\footnote{Note that this notation is slightly
    abusive, as this \icat{} really depends not just on $\cC$, but on
    its symmetric monoidal structure.}
\end{defn}

Recall that $\CMon(\cC)$ is by definition a full subcategory of
$\Fun(\Span(\xF), \cC)$. We will now prove that the symmetric monoidal
structure of Theorem~\ref{thm:ggncmon} is a localization of a Day convolution
structure on this functor \icat{}. To see this we first need to know
that this Day convolution indeed localizes to the product-preserving
functors. This is a special case of the following proposition, which
is really just \cite{BenMosheSchlank}*{4.24} stated in a
more general form, but we include a proof for completeness.

\begin{propn}[Ben-Moshe--Schlank]\label{propn:prodprday}
  Fix a set $\cR\subset \Cat_\infty$ of $\infty$-categories. Suppose $\cI$ is an \icat{} with $\cR$-shaped limits that is equipped
  with a symmetric monoidal structure such that for every $X \in \cI$,
  the functor $X \otimes \blank$ preserves $\cR$-shaped limits. Then for every
  presentably symmetric monoidal \icat{} $\cC$, the Day convolution structure on
  $\Fun(\cI, \cC)$ localizes to a symmetric monoidal structure on the
  full subcategory $\Fun^{\cR}(\cI, \cC)$ of $\cR$-limit-preserving
  functors.
\end{propn}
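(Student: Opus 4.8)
The plan is to realize $\Fun^{\cR}(\cI,\cC)$ as an accessible reflective localization of the presentably symmetric monoidal \icat{} $\Fun(\cI,\cC)$ equipped with Day convolution, and then to invoke Lurie's localization criterion \cite{HA}*{2.2.1.9}: a reflective localization $L$ of a symmetric monoidal \icat{} inherits a symmetric monoidal structure, with $L$ symmetric monoidal and the inclusion lax symmetric monoidal, as soon as $f\otimes X$ is an $L$-equivalence whenever $f$ is an $L$-equivalence and $X$ is any object. Recall that for $\cI$ small and $\cC$ presentably symmetric monoidal the Day convolution structure on $\Fun(\cI,\cC)$ is again presentably symmetric monoidal; in particular $\blank\otimes X$ preserves colimits for every $X$, which is what makes the criterion checkable on generators.

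First I would describe the local objects explicitly. For $c\in\cI$ and $V$ ranging over a fixed set of generators of $\cC$ under colimits, write $y^V_c := \Map_\cI(c,\blank)\otimes V\in\Fun(\cI,\cC)$ for the copowered corepresentable, characterized by $\Map(y^V_c,F)\simeq\Map_\cC(V,F(c))$; these generate $\Fun(\cI,\cC)$ under colimits. For each limit cone $\sigma$ in $\cI$ of shape $K\in\cR$, exhibiting $x\simeq\lim_K p$, and each generator $V$, the cone induces a map $s_{\sigma,V}\colon \colim_{K^{\op}}y^V_{p(\blank)}\to y^V_x$, and unwinding the (co)Yoneda identifications shows that $F$ is $s_{\sigma,V}$-local exactly when the comparison $F(x)\to\lim_K(F\circ p)$ is an equivalence. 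Since $\cI$ is small and $\cR$ is a set, the collection $S$ of all such $s_{\sigma,V}$ is a set, and $F$ is $S$-local iff $F$ preserves $\cR$-shaped limits; hence $\Fun^{\cR}(\cI,\cC)=S^{-1}\Fun(\cI,\cC)$ is an accessible reflective localization.

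The heart of the argument is the compatibility check on generators. The colimit formula for Day convolution gives $y^V_c\otimes y^W_d\simeq y^{V\otimes_\cC W}_{c\otimes_\cI d}$, since the Day tensor of copowered corepresentables is computed by iterated left Kan extension along $\otimes\colon\cI\times\cI\to\cI$ (using that $\otimes_\cC$ preserves colimits in each variable). Because $\blank\otimes y^W_d$ preserves colimits, tensoring a generating local equivalence yields
\[ s_{\sigma,V}\otimes y^W_d \;\simeq\; \bigl(\colim_{K^{\op}}y^{V\otimes W}_{p(\blank)\otimes d}\to y^{V\otimes W}_{x\otimes d}\bigr). \]
Now the hypothesis enters: as $d\otimes(\blank)$ preserves $\cR$-shaped limits, the diagram $k\mapsto p(k)\otimes d$ still has limit $x\otimes d$ with the cone induced by $\sigma$. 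Thus the displayed map is again of the form $s_{\sigma',V\otimes W}$ for a limit cone $\sigma'$ of shape $K$, and in particular an $S$-local equivalence. I expect this identification — that tensoring a ``limit-detecting'' map with a representable produces another such map — to be the main obstacle, since it is precisely the point at which the assumption on $\cI$ is used.

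To conclude I would bootstrap from generators to arbitrary objects by a saturation argument. For fixed $s\in S$, the class of $X$ such that $s\otimes X$ is an $S$-local equivalence is strongly saturated (as $s\otimes\blank$ preserves colimits and the $S$-local equivalences form a strongly saturated class) and contains every $y^W_d$, hence equals all of $\Fun(\cI,\cC)$. Then, for fixed $X$, the class of $f$ such that $f\otimes X$ is an $S$-local equivalence is likewise strongly saturated and contains $S$, hence contains every $S$-local equivalence. This verifies the hypothesis of \cite{HA}*{2.2.1.9}, so the Day convolution structure localizes to a symmetric monoidal structure on $\Fun^{\cR}(\cI,\cC)$, as claimed.
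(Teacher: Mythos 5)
Your proof is correct, but it is organized quite differently from the paper's. The paper first treats the universal case $\cC = \Spc$: there it shows $\Phi\otimes\blank$ preserves local equivalences by reducing (via closure of $L$-equivalences under colimits) to $\Phi = y(X)$ representable, and then dualizes through the internal hom --- $y(X)\otimes\blank$ preserves $L$-equivalences iff $M^{y(X)}$ is again local for every local $M$ --- computing $M^{y(X)}\simeq M(\blank\otimes X)$ from monoidality of the Yoneda embedding; the case of general $\cC$ is then deduced from the case of spaces via the equivalence $\Fun(\cI,\cC)\simeq \Fun(\cI,\Spc)\otimes\cC$ in $\PrL$ and the results of Ben-Moshe--Schlank (their 3.10 and 4.21), together with an identification $\Fun^{\cR}(\cI,\Spc)\otimes\cC\simeq\Fun^{\cR}(\cI,\cC)$. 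You instead work with a general $\cC$ throughout: you exhibit an explicit set $S$ of limit-comparison maps between copowered corepresentables presenting $\Fun^{\cR}(\cI,\cC)$ as $S^{-1}\Fun(\cI,\cC)$, compute $y^V_c\otimes y^W_d\simeq y^{V\otimes W}_{c\otimes d}$ directly from the Kan-extension formula for Day convolution, observe that tensoring a generating map with a corepresentable yields another limit-comparison map (this is where the hypothesis on $\otimes_\cI$ enters, exactly as in the paper), and then bootstrap by saturation to invoke \cite{HA}*{2.2.1.9}. The underlying mechanism --- corepresentables generate, and the monoidal Yoneda identification converts the hypothesis on $\cI$ into locality --- is the same, but your route is more self-contained: it avoids the reduction to spaces and the $\PrL$ tensor-product machinery entirely, at the cost of more explicit bookkeeping with generators and saturated classes, which the paper's internal-hom trick sidesteps. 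Two small points of hygiene: ``strongly saturated'' is a property of classes of \emph{morphisms}, so for your object-indexed class you should just say ``closed under colimits (since $s\otimes\blank$ preserves colimits and $S$-local equivalences are closed under colimits in the arrow category)''; and note that $V\otimes W$ need not lie in your chosen generating set, so one should observe (as your argument implicitly does) that \emph{every} map $s_{\tau,U}$, for any limit cone $\tau$ of shape in $\cR$ and any $U\in\cC$, is an $S$-local equivalence, not only those with $U$ a generator. Both points are cosmetic and do not affect the validity of the argument.
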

\begin{proof}
  We first consider the case where $\cC$ is the \icat{} $\Spc$ of
  spaces, with the cartesian product.
  Let $L \colon \Fun(\cI, \Spc) \to \Fun^{\cR}(\cI, \Spc)$ denote the
  localization functor. We must show that for any $\Phi \in \Fun(\cI,
  \Spc)$, the functor $\Phi \otimes \blank$ preserves
  $L$-equivalences. Since $L$-equivalences are closed under colimits
  in the arrow \icat{}, it suffices to show this when $\Phi$ is of the
  form $y(X)$ for $X \in \cI^{\op}$, where $y$ is the Yoneda embedding, since these
  generate $\Fun(\cI, \Spc)$ under colimits. Also, for any
  $\Phi$ the statement is equivalent to: for
  $M \in \Fun^{\cR}(\cI, \Spc)$, the internal hom $M^{\Phi}$ in $\Fun(\cI,\Spc)$ also
  preserves $\cR$-shaped limits, as is immediate from the natural equivalence
  \[ \Map(\Phi \otimes \blank, M) \simeq \Map(\blank, M^{\Phi})\]
  and the relation between $L$-equivalences and $L$-local objects.

  Now we claim that we can describe $M^{y(X)}$ as
  $M(\blank \otimes X)$. Indeed, we know that
  $y \colon \cI^{\op} \to \Fun(\cI, \Spc)$ is symmetric monoidal, so
  that we have natural equivalences
  \[
    \begin{split}
      M^{y(X)}(\blank) & \simeq \Map(y(\blank), M^{y(X)}) \\
                       & \simeq \Map(y(\blank) \otimes y(X), M) \\
                       & \simeq \Map(y(\blank \otimes X), M) \\
      & \simeq  M(\blank \otimes X).
    \end{split}
  \]
  Now by assumption $\blank \otimes X \colon \cI \to \cI$ preserves
  $\cR$-shaped limits, hence so does the composite $M(\blank\otimes X)$.

  For a general $\cC$, we have a commutative square
  \[
    \begin{tikzcd}
      \Fun(\cI, \Spc) \otimes \cC \ar[r, "\sim"] \ar[d] & \Fun(\cI, \cC)
      \ar[d] \\
      \Fun^{\cR}(\cI, \Spc) \otimes \cC \ar[r,"\sim"] &
      \Fun^{\cR}(\cI, \cC)
    \end{tikzcd}
  \]
  where the top equivalence is symmetric monoidal for the Day
  convolution by \cite{BenMosheSchlank}*{3.10} and the bottom
  equivalence will be shown below. The result
  then follows from \cite{BenMosheSchlank}*{4.21}, which shows that the left map is again a symmetric monoidal localization.

  It remains to show that the map
  \[
  \Fun^{\cR}(\cI,\Spc)\otimes \cC \rightarrow \Fun^{\cR}(\cI,\cC)
  \]
  is an equivalence. For this we may compute
  \begin{align*}
  \Fun^{\cR}(\cI, \Spc)\otimes \cC &\simeq \Fun^{\mathrm{R}}(\cC^{\op},\Fun^{\cR}(\cI, \Spc))\\
  &\simeq \Fun^{\cR}(\cI,\Fun^{\mathrm{R}}(\cC^{\op},\Spc)) \\
  &\simeq \Fun^{\cR}(\cI,\cC),
  \end{align*}
  where we have used that for any two presentable \icats{} $\cC$ and $\cD$ the tensor product $\cC\otimes \cD$ is equivalent to the \icat{} $ \Fun^{\mathrm{R}}(\cC^{\op},\cD)$ of (small) limit-preserving functors from $\cC^{\op}$ to $\cD$; see \cite{HA}*{4.8.1.17}.
\end{proof}

Applying this to $\Span(\xF)$, we get the following variant of
\cite{BenMosheSchlank}*{4.26}:
\begin{cor}
  Let $\cC$ be a presentably symmetric monoidal \icat{}. The symmetric
  monoidal structure on $\CMon(\cC)$ from \cite{GepnerGrothNikolaus}
  is a localization of the Day convolution on $\Fun(\Span(\xF), \cC)$
  using the symmetric monoidal structure on $\Span(\xF)$ induced by
  the cartesian product in $\xF$.
\end{cor}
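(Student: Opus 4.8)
The plan is to deduce the statement by applying \cref{propn:prodprday} to $\cI = \Span(\xF)$ and then identifying the resulting localized structure with the one of \cref{thm:ggncmon} by means of its uniqueness. Recall from \cref{def:cmon} that $\CMon(\cC)$ is by definition the full subcategory $\Fun^{\times}(\Span(\xF), \cC)$ of finite-product-preserving functors. Taking $\cR$ to be (a skeleton of) the set of finite discrete \icats{}, so that $\cR$-shaped limits are finite products, we thus have $\CMon(\cC) = \Fun^{\cR}(\Span(\xF), \cC)$, and the whole statement reduces to verifying the hypotheses of \cref{propn:prodprday} for $\Span(\xF)$ and then matching the output with the Gepner--Groth--Nikolaus structure.

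First I would check the two hypotheses of \cref{propn:prodprday} for the symmetric monoidal structure on $\Span(\xF)$ induced by the cartesian product of finite sets. The \icat{} $\Span(\xF)$ admits finite products, computed by disjoint unions of finite sets: a span into $S \sqcup T$ is the same datum as a pair of spans, one into $S$ and one into $T$, and the empty set is terminal. Second, for every $X \in \Span(\xF)$ the functor $X \otimes \blank = X \times \blank$ preserves these finite products, which is exactly the distributivity equivalence $X \times (S \sqcup T) \simeq (X \times S) \sqcup (X \times T)$ in $\xF$, naturally in spans. Hence \cref{propn:prodprday} produces a symmetric monoidal localization
\[ L \colon \Fun(\Span(\xF), \cC) \to \CMon(\cC) \]
of the Day convolution, which is the asserted symmetric monoidal structure on $\CMon(\cC)$ — provided it coincides with that of \cref{thm:ggncmon}.

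For the identification I would invoke the uniqueness in \cref{thm:ggncmon}: the Gepner--Groth--Nikolaus structure is the \emph{unique} presentably symmetric monoidal structure on $\CMon(\cC)$ making the free commutative monoid functor $\cC \to \CMon(\cC)$ symmetric monoidal. The localized structure is again presentably symmetric monoidal, since $\CMon(\cC)$ is an accessible localization of the presentable \icat{} $\Fun(\Span(\xF), \cC)$ and $L$ is symmetric monoidal, so the induced tensor product preserves colimits in each variable. It then remains to see that the free commutative monoid functor is symmetric monoidal for the localized Day convolution. This functor is left adjoint to the forgetful functor $U \colon \CMon(\cC) \to \cC$; writing $U$ as the inclusion $\iota \colon \CMon(\cC) \hookrightarrow \Fun(\Span(\xF), \cC)$ followed by $\ev_{\bfone}$, its left adjoint is the composite $L \circ i_{!}$, where $i_{!}$ denotes left Kan extension along $\{\bfone\} \hookrightarrow \Span(\xF)$ and $\bfone$ is the monoidal unit. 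Now $L$ is symmetric monoidal by construction, and $i_{!}$ is left Kan extension along the symmetric monoidal functor from the trivial symmetric monoidal \icat{} that selects the unit object, hence is symmetric monoidal for Day convolution by functoriality of the Day construction. Thus $L \circ i_{!}$ is symmetric monoidal, and uniqueness yields the identification.

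The main obstacle is this last identification, not the application of \cref{propn:prodprday}: one must pin down that the free commutative monoid functor genuinely is $L \circ i_{!}$ and that left Kan extension from the monoidal unit is symmetric monoidal for Day convolution. By contrast, the verification that finite products in $\Span(\xF)$ are disjoint unions and that cartesian product distributes over them is routine.
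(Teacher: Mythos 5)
Your proposal is correct and follows essentially the same route as the paper's proof: verify the hypotheses of the product-preservation proposition for the tensor structure on $\Span(\xF)$ (finite products are disjoint unions, and the cartesian product of finite sets distributes over them), then identify the localized Day convolution with the Gepner--Groth--Nikolaus structure via its uniqueness by factoring the free commutative monoid functor as the localization composed with left Kan extension along the unit inclusion $\{\bfone\} \to \Span(\xF)$. The only cosmetic difference is that the paper cites \cite{BenMosheSchlank}*{3.6} for the symmetric monoidality of this left Kan extension, where you appeal to functoriality of Day convolution — the same fact.
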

\begin{proof}
  The cartesian product on $\xF$ preserves coproducts in each variable separately. As $\xF^\op\hookrightarrow\Span(\xF)$ is essentially surjective and preserves finite products, we conclude that the tensor product on $\Span(\xF)$ (given by the cartesian product in $\xF$) preserves products in
  each variable. Thus the assumptions of Proposition~\ref{propn:prodprday} are
  satisfied, so that
  the Day convolution on $\Fun(\Span(\xF), \cC)$ localizes to
  $\CMon(\cC)$, making this a presentably symmetric monoidal
  \icat{}. It remains to show that this satisfies the condition that
  unqiuely characterizes the symmetric monoidal structure of
  \cite{GepnerGrothNikolaus}, namely that the free commutative monoid
  functor
  \[ F \colon \cC \to \CMon(\cC)\]
  is symmetric monoidal. The functor $F$ is by definition left adjoint
  to the forgetful functor $\CMon(\cC) \to \cC$, which
  factors as
  \[ \CMon(\cC) \hookrightarrow \Fun(\Span(\xF), \cC) \xto{u^{*}}
    \cC,\]
  where $u^{*}$ is evaluation at $\bfone$, \ie{} restriction along $u
  \colon \{\bfone\} \to \Span(\xF)$. Hence $F$ factors as
  \[ \cC \xto{u_{!}} \Fun(\Span(\xF), \cC) \xto{L} \CMon(\cC). \] Here
  the localization $L$ is symmetric monoidal for the localized Day
  convolution, and $u_{!}$ is symmetric monoidal by
  \cite{BenMosheSchlank}*{3.6} since $u \colon * \to \Span(\xF)$ is
  symmetric monoidal.
\end{proof}

Using the universal property of Day convolution, we then immediately
get the following description of commutative semirings:
\begin{cor}\label{cor:crig=lsm}
  For any presentably symmetric monoidal \icat{} $\cC$, the \icat{}
  $\CRig(\cC)$ of commutative semirings in $\cC$ is equivalent to the
  full subcategory of
  $\FunLSM(\Span(\xF)^{\otimes},
  \cC^{\otimes})$ spanned by those lax symmetric monoidal functors
  whose underlying functors $\Span(\xF) \to \cC$ are commutative
  monoids.
\end{cor}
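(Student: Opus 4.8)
The plan is to unwind the definition $\CRig(\cC) = \CAlg(\CMon(\cC))$ and combine the symmetric monoidal localization established in the corollary above with the universal property of Day convolution. First I would use the localization: the previous corollary exhibits the Gepner--Groth--Nikolaus symmetric monoidal structure on $\CMon(\cC)$ as the \emph{symmetric monoidal} localization of the Day convolution on $\Fun(\Span(\xF), \cC)$ along the reflective localization onto the product-preserving functors. For any symmetric monoidal reflective localization, passing to commutative algebras again yields a reflective localization whose local objects are precisely the algebras with local underlying object (see, e.g., \cite{HA}*{2.2.1.9}). Hence $\CAlg(\CMon(\cC))$ is identified with the full subcategory of the commutative algebras for the Day convolution spanned by those whose underlying functor $\Span(\xF) \to \cC$ lies in $\CMon(\cC)$, i.e.\ is a commutative monoid.

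Next I would invoke the universal property of Day convolution, which provides a natural equivalence
\[ \CAlg\bigl(\Fun(\Span(\xF), \cC)\bigr) \simeq \Fun_{/\Span(\xF)}^{\txt{LSM}}(\Span(\xF)^{\otimes}, \cC^{\otimes}) \]
between commutative algebras for the Day convolution (left-hand side, with the Day convolution symmetric monoidal structure understood) and lax symmetric monoidal functors $\Span(\xF) \to \cC$ in their total-space formulation, where $\Span(\xF)^{\otimes}$ is the symmetric monoidal structure induced by the cartesian product in $\xF$. The key point for the argument is that this equivalence is compatible with the two forgetful functors down to $\Fun(\Span(\xF), \cC)$: the underlying object of a Day convolution algebra corresponds to the underlying functor of the lax symmetric monoidal functor, obtained by restricting $\Span(\xF)^{\otimes} \to \cC^{\otimes}$ along the inclusion of the fibre $\Span(\xF) \simeq \Span(\xF)^{\otimes}_{\bfone}$ over $\bfone$. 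Granting this, the full subcategory cut out in the first step transports precisely onto the full subcategory described in the statement, yielding the claimed equivalence with $\CRig(\cC)$.

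The only point requiring genuine care --- and hence the main potential obstacle --- is the compatibility of forgetful functors asserted in the second step, namely that the two notions of ``underlying functor $\Span(\xF)\to\cC$'' agree across the universal property of Day convolution. This is, however, built into sufficiently natural formulations of that universal property, or can be checked directly by verifying that restriction to the fibre over $\bfone$ recovers the underlying-object functor on algebras; once it is in place, matching the two full subcategories is purely formal, which is why the result follows immediately.
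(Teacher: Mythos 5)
Your proposal is correct and follows essentially the same route as the paper: first use the fact that $\CMon(\cC)$ is a symmetric monoidal localization of the Day convolution on $\Fun(\Span(\xF),\cC)$ to identify $\CRig(\cC)$ with the full subcategory of $\CAlg(\Fun(\Span(\xF),\cC))$ on algebras whose underlying functor is a commutative monoid, and then apply the universal property of Day convolution to identify such algebras with lax symmetric monoidal functors $\Span(\xF)^{\otimes}\to\cC^{\otimes}$. The compatibility of the two underlying-object functors that you flag as the delicate point is indeed the only thing to check, and it is built into the cited formulations of the Day convolution universal property, exactly as you suggest.
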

\begin{proof}
  Since $\CMon(\cC)$ is a symmetric monoidal localization of
  $\Fun(\Span(\xF), \cC)$, the inclusion naturally acquires the structure of a
  lax symmetric monoidal functor \cite{GepnerGrothNikolaus}*{3.4}. Postcomposition with this then induces a
  full subcategory inclusion
  \[ \CRig(\cC) = \CAlg(\CMon(\cC)) \subseteq \CAlg(\Fun(\Span(\xF),
    \cC))\] whose image is spanned by the commutative algebras whose
  underlying object in $\Fun(\Span(\xF), \cC)$ is a commutative
  monoid, see e.g.~the proof of~\cite{GepnerGrothNikolaus}*{3.6}.
  Moreover, the universal property of Day convolution from
  \cite{GlasmanDay} (or \cite{HA}*{\S 2.2.6}) identifies
  commutative algebras in $\Fun(\Span(\xF), \cC)$ with
  lax symmetric monoidal functors
  $\Span(\xF)^{\otimes} \to \cC^{\otimes}$.
\end{proof}

In the case where the symmetric monoidal structure on $\cC$ is
cartesian, we can simplify this further:
\begin{cor}\label{cor:crigspanfot}
  For any cartesian closed presentable \icat{} $\cC$, the \icat{}
  $\CRig(\cC)$ (using the cartesian product on $\cC$) is equivalent to the full subcategory of
  $\Fun(\Span(\xF)^{\otimes}, \cC)$ spanned by the functors \[F
  \colon \Span(\xF)^{\otimes} \to \cC\] such that:
  \begin{enumerate}[(1)]
  \item $F$ is a $\Span(\xF)^{\otimes}$-monoid in the sense of Definition~\ref{defn:monoid}.
  \item The restriction of $F$ to $\Span(\xF)$ is a commutative monoid
    in $\cC$.
  \end{enumerate}
\end{cor}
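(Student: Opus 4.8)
The plan is to chain together two equivalences we have already established and then translate the defining conditions across them. By \Cref{cor:crig=lsm}, $\CRig(\cC)$ is equivalent to the full subcategory of $\FunLSM_{/\Span(\xF)}(\Span(\xF)^{\otimes}, \cC^{\otimes})$ spanned by those lax symmetric monoidal functors whose underlying functor $\Span(\xF) \to \cC$ is a commutative monoid. Since $\cC$ is cartesian closed presentable, its cartesian symmetric monoidal structure $\cC^{\times}$ is available, and \Cref{thm:lsm=monoid} (applied with $\cD^{\otimes} = \Span(\xF)^{\otimes}$) gives an equivalence
\[
  \FunLSM_{/\Span(\xF)}(\Span(\xF)^{\otimes}, \cC^{\times}) \isoto \Mon_{\Span(\xF)^{\otimes}}(\cC)
\]
between lax symmetric monoidal functors into $\cC^{\times}$ and $\Span(\xF)^{\otimes}$-monoids in $\cC$, the latter being a full subcategory of $\Fun(\Span(\xF)^{\otimes}, \cC)$. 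This is precisely condition (1) in the statement.

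First I would assemble these: composing the inclusion from \Cref{cor:crig=lsm} with the equivalence of \Cref{thm:lsm=monoid} exhibits $\CRig(\cC)$ as a full subcategory of $\Fun(\Span(\xF)^{\otimes}, \cC)$. Since both maps are inclusions of full subcategories (respectively an equivalence onto a full subcategory), the composite lands in the full subcategory cut out by condition (1), namely the $\Span(\xF)^{\otimes}$-monoids. What remains is to identify the further condition inherited from \Cref{cor:crig=lsm}. Under the equivalence of \Cref{thm:lsm=monoid}, a lax symmetric monoidal functor $F$ corresponds to the $\Span(\xF)^{\otimes}$-monoid given by composing $F$ with the structural functor $\cC^{\times} \to \cC$; crucially, this functor is compatible with restriction along $\Span(\xF) = \ast^{\otimes} \hookrightarrow \Span(\xF)^{\otimes}$ (the fibre over $\bfone$), so the underlying functor $\Span(\xF) \to \cC$ of the monoid agrees with the underlying functor of the lax symmetric monoidal functor. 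Thus the condition ``the underlying functor $\Span(\xF) \to \cC$ is a commutative monoid'' transports verbatim to condition (2).

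The main point requiring care is this last compatibility: I must check that the restriction-to-$\Span(\xF)$ operation commutes with passing from a lax symmetric monoidal functor to its associated monoid under \Cref{thm:lsm=monoid}. Concretely, the underlying functor of a $\Span(\xF)^{\otimes}$-monoid is its restriction along the inclusion of the fibre $\Span(\xF)^{\otimes}_{\bfone} \simeq \Span(\xF)$, while the underlying functor of a lax symmetric monoidal functor $F$ into $\cC^{\times}$ is obtained by restricting $F$ to this same fibre and then postcomposing with $\cC^{\times}_{\bfone} \simeq \cC$. Since the equivalence of \Cref{thm:lsm=monoid} is implemented precisely by postcomposition with the structural functor $\cC^{\times} \to \cC$ (which on the fibre over $\bfone$ is the identity of $\cC$), these two restrictions are identified, and so the two full-subcategory conditions match up on the nose. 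I expect this bookkeeping to be the only genuine obstacle; everything else is a formal composition of full subcategory inclusions, and the identification of $\cC^{\times}$ as the relevant cartesian structure is immediate since $\cC$ is cartesian closed presentable.
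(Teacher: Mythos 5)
Your proposal is correct and takes essentially the same route as the paper, whose entire proof is to combine \cref{cor:crig=lsm} with \cref{thm:lsm=monoid} applied to $\cD^{\otimes} = \Span(\xF)^{\otimes}$. The compatibility check you spell out --- that the structural functor $\cC^{\times} \to \cC$ restricts to an equivalence on the fibre over $\bfone$, so that condition (2) matches the commutative-monoid condition on underlying functors --- is exactly the bookkeeping the paper leaves implicit.
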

\begin{proof}
  Combine Corollary~\ref{cor:crig=lsm} with the description of lax symmetric
  monoidal functors to $\cC^{\times}$ as monoids from
  Theorem~\ref{thm:lsm=monoid}.
\end{proof}

\section{Comparison with bispans}\label{sec:comparison}

In this section, we first prove a general localization result for
\icats{} of spans in \S\ref{sec:loc}. We then apply this to our
functor $\Span(\xF)^{\otimes} \to \Bispan(\xF)$ in
\S\ref{sec:locbispan} before we complete the proof of our comparison
result for commutative semirings in \S\ref{sec:proof}.

\subsection[Localizations of span $\infty$-categories]{Localizations of span $\bm\infty$-categories}\label{sec:loc}
In this subsection we will prove the following general result about
localizations of \icats{} of spans:
\begin{thm}\label{thm:spanloc}
  Suppose $\phi \colon (\cC, \cC_{F}) \to (\mathcal{D},
  \mathcal{D}_{F})$ is a functor of span pairs such that
  \begin{enumerate}[(i)]
  \item $\phi$ is a localization at some class of maps $W$,
  \item $\cC_{F} \to \mathcal{D}_{F}$ is a right fibration.
  \end{enumerate}
  Then $\Span(\phi) \colon \Span_{F}(\cC) \to \Span_{F}(\mathcal{D})$ is a localization, at the class of maps of the form $X \xfrom{w} Y \xto{=} Y$ where $w$ is in $W$.
\end{thm}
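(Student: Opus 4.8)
The plan is to deduce this from the separation-of-variables theorem (Theorem~\ref{introthm:sep-var}), using the canonical factorization system on span \icats{}.

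First I would recall that $\Span_{F}(\cC)$ carries an orthogonal factorization system in which every span $X \xleftarrow{b} Z \xrightarrow{f} Y$ factors essentially uniquely as the backward span $X \xleftarrow{b} Z \xrightarrow{=} Z$ followed by the forward span $Z \xleftarrow{=} Z \xrightarrow{f} Y$. Writing $(E, M)$ for this factorization system, the left class $E$ is the wide subcategory of backward spans, which is precisely $\Span_{\eq}(\cC) \simeq \cC^{\op}$, while the right class $M$ is the wide subcategory of forward spans, which is identified with $\cC_{F}$ (with composition inherited covariantly from $\cC$). This is the inert--active factorization underlying the algebraic pattern structure on spans; the one point requiring care is the orthogonality of backward and forward maps, which follows from the essential uniqueness of the factorization above. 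The same discussion applies to $\cD$, giving a factorization system $(E', M')$ on $\Span_{F}(\cD)$.

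Next I would check that $\Span(\phi)$ respects these factorization systems and identify the two restrictions. Since $\phi$ is a morphism of span pairs it preserves forward maps and pullbacks along them, so $\Span(\phi)$ sends backward spans to backward spans and forward spans to forward spans; hence it restricts to functors $E \to E'$ and $M \to M'$. On backward maps this restriction is $\Span_{\eq}(\phi) \simeq \phi^{\op} \colon \cC^{\op} \to \cD^{\op}$, which is a localization at $W^{\op}$ because $\phi$ is a localization at $W$ by hypothesis~(i) and localizations are stable under passing to opposite \icats{}. On forward maps the restriction is $\phi|_{\cC_{F}} \colon \cC_{F} \to \cD_{F}$, which is a right fibration by hypothesis~(ii).

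With these two facts in hand, conditions~(1) and~(2) of Theorem~\ref{introthm:sep-var} hold for the functor $\Span(\phi)$ equipped with the factorization systems $(E, M)$ and $(E', M')$, so the theorem concludes that $\Span(\phi)$ is a localization at the class $W^{\op} \subseteq E$. Finally I would translate this class back through the identification $E \simeq \cC^{\op}$: a map of $W^{\op}$ is exactly a backward span $X \xleftarrow{w} Y \xrightarrow{=} Y$ with $w \in W$, which is precisely the class named in the statement. The main obstacle is thus really packaged into Theorem~\ref{introthm:sep-var}; granting it, the only genuine verification left is the structural input of the first step---that backward and forward maps assemble into a factorization system and restrict as claimed.
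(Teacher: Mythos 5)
Your proposal is correct and takes essentially the same route as the paper: both equip $\Span_F(\cC)$ and $\Span_F(\cD)$ with the canonical backward--forward factorization system (for which the paper simply cites \cite{HHLN2}*{4.9} rather than sketching the orthogonality), identify the restrictions of $\Span(\phi)$ with $\phi^{\op}$ on the backward maps and $\phi|_{\cC_F}$ on the forward maps, and then conclude by applying the separation-of-variables criterion (Theorem~\ref{thm:sep-var}). The only differences are cosmetic, such as your explicitly spelling out the passage to opposite categories and the identification of $W^{\op}$ with the stated class of backward spans.
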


In fact, this theorem is a special case of a more general result about
localizations of factorization systems, in the following sense:

\begin{defn}
  \label{def:Factorization_System}
  A \emph{factorization system} on an $\infty$-category $\cC$ consists
  of two wide subcategories $E,M\subset \cC$ satisfying the following
  conditions:
  \begin{enumerate}
  \item Every morphism in $E$ is \emph{left orthogonal} to every morphism in $M$.
  \item Every morphism $f$ of $\cC$ admits a
    factorization $f \simeq me$ with $e$ in $E$ and $m$
    in $M$.
  \end{enumerate}
  If $(\cC,E,M)$ and $(\cC',E',M')$ are two factorization systems,
  then a \emph{map of factorization systems}
  $f\colon(\cC,E,M)\to (\cC',E',M')$ is a functor
  $f\colon \cC\to \cC'$ of their underlying \icats{} satisfying
  $f(E)\subset E'$ and $f(M)\subset M'$.
\end{defn}

\begin{remark}
  The above definition follows
  \cite{abfj-left-exact}*{3.1.6}. As a consequence of
  {Lemma 3.1.9} of the same paper,
  it is equivalent to
  \cite{HTT}*{5.2.8.8}; see also
  \cite[3.32]{CLL_Clefts}.
\end{remark}

\begin{propn}\label{prop:bc-fact-core}
  Let $(\cC,E,M)$ be a factorization system and consider the
  commutative square of inclusions
  \begin{equation}\label{diag:bc-factorization-system}
    \begin{tikzcd}
      \cC^{\simeq}\arrow[r,hook,"\alpha"]\arrow[d,hook, "i"'] & E\arrow[d,hook, "j"]\\
      M\arrow[r,hook, "\beta"'] & \cC\rlap.
    \end{tikzcd}
  \end{equation}
  Then the induced Beck--Chevalley transformation \[i_!\alpha^*\to\beta^*j_!\]
  on functors to any cocomplete \icat{} is an
  equivalence.
\end{propn}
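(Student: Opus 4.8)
The plan is to check the Beck--Chevalley map $i_!\alpha^*\to\beta^*j_!$ objectwise. Since the target \icat{} is cocomplete, both left Kan extensions are computed by the pointwise colimit formula, so it suffices to show that for each $m\in M$ the induced map
\[
(i_!\alpha^*F)(m)\longrightarrow (\beta^*j_!F)(m)=(j_!F)(m)
\]
is an equivalence for every $F\colon E\to\cD$. The left-hand side is the colimit of $F$ (restricted along $\alpha$) over the comma \icat{} $\cC^{\simeq}\times_M M_{/m}$, whose objects are pairs $(c,\,c\to m)$ with $c\in\cC^{\simeq}$ and structure map in $M$; the right-hand side is the colimit of $F$ over $E\times_{\cC}\cC_{/m}$, whose objects are pairs $(e,\,e\to m)$ with $e\in E$ and an \emph{arbitrary} structure map in $\cC$. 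The comparison functor $\Theta_m\colon \cC^{\simeq}\times_M M_{/m}\to E\times_{\cC}\cC_{/m}$ induced by $\alpha$ and $\beta$ sends $(c,\,c\to m)$ to the same pair viewed in $\cC$, and the objectwise Beck--Chevalley map is precisely the canonical comparison of colimits along $\Theta_m$ (the diagram on the source being the restriction along $\Theta_m$ of the diagram $(e,f)\mapsto F(e)$ on the target). Thus it suffices to prove that each $\Theta_m$ is cofinal.

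To analyse $\Theta_m$, I would factor it as $\cC^{\simeq}\times_M M_{/m}\xto{\ \simeq\ }\mathcal{K}_m\hookrightarrow E\times_{\cC}\cC_{/m}$, where $\mathcal{K}_m\subseteq E\times_{\cC}\cC_{/m}$ is the full subcategory on those objects $(x,\,g\colon x\to m)$ whose structure map $g$ lies in $M$. The first functor is an equivalence, since both sides are the groupoid of $M$-morphisms into $m$; the only point to verify is that every morphism of $\mathcal{K}_m$ is already invertible. Such a morphism is an $h\in E$ with $g'h\simeq g$ and $g,g'\in M$, and the cancellation property of a factorization system (if $g'h$ and $g'$ lie in $M$ then so does $h$, a standard consequence of orthogonality) together with $E\cap M=\cC^{\simeq}$ forces $h$ to be an equivalence. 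It then remains to see that the inclusion $\mathcal{K}_m\hookrightarrow E\times_{\cC}\cC_{/m}$ is cofinal, which I would check via Quillen's Theorem A \cite{HTT}*{4.1.3.1}: for a fixed $(e,f)$ in $E\times_{\cC}\cC_{/m}$, the comma \icat{} $\mathcal{K}_m\times_{E\times_{\cC}\cC_{/m}}(E\times_{\cC}\cC_{/m})_{(e,f)/}$ is exactly the space of factorizations of $f\colon e\to m$ as $e\xto{h} x\xto{g} m$ with $h\in E$ and $g\in M$ (its morphisms, being morphisms of $\mathcal{K}_m$, are all invertible). This space is contractible, as that is precisely the defining property of the $(E,M)$-factorization system, so the comma \icat{} is weakly contractible and the inclusion is cofinal.

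Combining these steps, $\Theta_m$ is a composite of an equivalence with a cofinal functor, hence cofinal, so the objectwise Beck--Chevalley map is an equivalence and the proposition follows. I expect the only genuine content to be the cofinality of $\mathcal{K}_m\hookrightarrow E\times_{\cC}\cC_{/m}$: everything ultimately reduces to the contractibility of the space of $(E,M)$-factorizations, which is exactly where orthogonality enters. By contrast, the identification of the two comma \icats{} and of the comparison map with the Beck--Chevalley transformation is routine bookkeeping, and the cancellation property invoked above is a standard formal consequence of the factorization system axioms.
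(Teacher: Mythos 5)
Your proof is correct and reaches the same key reduction as the paper, but settles it by a genuinely different argument. Both proofs use the pointwise colimit formula to reduce the Beck--Chevalley map to the statement that, for each $m$, the comparison functor $\Theta_m\colon \cC^{\simeq}\times_M M_{/m}\to E\times_\cC \cC_{/m}$ is cofinal. The paper proves cofinality by exhibiting $\Theta_m$ as a right adjoint: by \cite{CLL_Clefts}*{Proof of 3.33} the inclusion $M_{/m}\hookrightarrow\cC_{/m}$ admits a left adjoint whose unit maps lie in $E$, this left adjoint restricts to the relevant (non-full) subcategories, and right adjoints are cofinal by \cite{CisinskiBook}*{6.1.13}. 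You instead factor $\Theta_m$ as an equivalence onto the full subcategory $\mathcal{K}_m\subseteq E\times_\cC\cC_{/m}$ of objects whose structure map lies in $M$ --- your use of right cancellation for $M$ together with $E\cap M=\cC^{\simeq}$ is exactly the right justification --- and then verify Quillen's Theorem A for the inclusion $\mathcal{K}_m\hookrightarrow E\times_\cC\cC_{/m}$, identifying the comma \icats{} with spaces of $(E,M)$-factorizations. One small correction there: under the paper's definition of factorization system (following \cite{abfj-left-exact}*{3.1.6}, \ie{} orthogonality plus existence of factorizations), contractibility of the factorization space is not the defining property but a standard consequence, for which you should cite \cite{HTT}*{5.2.8.17}; also, the invertibility of the morphisms in your comma \icats{} is most cleanly seen by noting they sit over $\mathcal{K}_m$ via a left fibration, which is conservative. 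The trade-off between the two routes: yours is more self-contained, using only standard facts about factorization systems from \cite{HTT}, and it makes visible the geometric reason the proposition holds (the fibers of $\Theta_m$ are factorization spaces); the paper's is shorter given the reflection result it imports from \cite{CLL_Clefts}, and sidesteps any discussion of factorization spaces and Theorem A.
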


\begin{proof}
  Using the colimit formula for left Kan extensions, we see that it suffices to show that for every $X\in \cC$, the induced functor
  $\cC^{\simeq}\times_MM_{/X}\to E\times_\cC \cC_{/X}$ is (co)final, \ie{} that restriction along it induces an equivalence on all colimits. We will show that the functor above is a right adjoint, from which the result follows by \cite{CisinskiBook}*{6.1.13}.

  For this, we note that by \cite{CLL_Clefts}*{Proof of 3.33}, the inclusion $\kappa\colon M_{/X}\to \cC_{/X}$
  is fully faithful and admits a left adjoint $\lambda$ (automatically a localization) such that for
  every $(Y\to X)\in \cC_{/X}$ the unit map $Y\to\kappa\lambda(Y)$
  belongs to $E$; moreover, in the same proof it is also shown that
  $\lambda$ inverts all maps in $E$. It follows directly that the
  left adjoint $\lambda\colon\cC_{/X}\to M_{/X}$ restricts to a functor between the non-full
  subcategories
  \[ E\times_\cC \cC_{/X}\to \cC^{\simeq}\times_M M_{/X},\] and that
  also the unit restricts accordingly. As the counit is an equivalence
  by the full faithfulness of $M_{/X}\to \cC_{/X}$, it similarly
  restricts to these subcategories, so that the restricted functor is
  a right adjoint, as claimed.
\end{proof}

Using this, we will now prove the following general base change criterion:
\begin{thm}\label{thm:base-change-general}
	Let
	\begin{equation}\label{eq:factorization-system-BC}
		\begin{tikzcd}
			(\cC_{00},E_{00},M_{00})\arrow[d, "f_0"']\arrow[r, "g_0"] & (\cC_{10}, E_{10}, M_{10})\arrow[d, "f_1"]\\
			(\cC_{01}, E_{01}, M_{01})\arrow[r, "g_1"'] & (\cC_{11}, E_{11}, M_{11})
		\end{tikzcd}
	\end{equation}
	be a commutative diagram of small factorization systems and assume that the restricted maps $f_0\colon M_{00}\to M_{01}$ and $f_1\colon M_{10}\to M_{11}$ are right fibrations.

	Let $\mathcal{T}$ be a complete \icat{} and let $X\colon \cC_{10}\to\mathcal{T}$ be arbitrary. Then the Beck--Chevalley map $g_1^*f_{1*}X\to f_{0*}g_0^*X$ associated to $(\ref{eq:factorization-system-BC})$ is an equivalence if and only if the analogous Beck--Chevalley map $g_1^*f_{1*}(X|_{E_{10}})\to f_{0*}g_0^*(X|_{E_{10}})$ associated to
	\begin{equation*}
		\begin{tikzcd}
			E_{00}\arrow[d, "f_0"']\arrow[r, "g_0"] & E_{10}\arrow[d, "f_1"]\\
			E_{01}\arrow[r, "g_1"'] & E_{11}
		\end{tikzcd}
	\end{equation*}
	is an equivalence.
\end{thm}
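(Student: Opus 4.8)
The plan is to reduce the statement to a single key lemma: that \emph{restriction to the left class commutes with right Kan extension along the vertical maps}. Concretely, I would first show that for a map of factorization systems $f\colon(\cC,E,M)\to(\cC',E',M')$ whose restriction $f\colon M\to M'$ is a right fibration, and for any functor $Y\colon\cC\to\mathcal{T}$ into a complete \icat{}, the canonical Beck--Chevalley map
\[ (f_*Y)|_{E'}\longrightarrow f_*(Y|_{E}) \]
associated to the square of inclusions $E\hookrightarrow\cC$, $E'\hookrightarrow\cC'$ is an equivalence; here the right-hand side denotes the right Kan extension along $f\colon E\to E'$. Granting this, I would apply it twice --- once to $f_1$ with $Y=X$, and once to $f_0$ with $Y=g_0^*X$ --- and use that $g_0,g_1$ preserve the left classes to identify the restriction of the full Beck--Chevalley map along $E_{01}\hookrightarrow\cC_{01}$ with the Beck--Chevalley map of the $E$-square. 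Since this identification is itself built from Beck--Chevalley maps of commuting squares of inclusions, it is compatible with the two transformations by the usual pasting (mate) calculus.

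The theorem then follows from a soft observation: the inclusion $E_{01}\hookrightarrow\cC_{01}$ is the identity on objects, because $E_{01}$ is a \emph{wide} subcategory. A natural transformation of functors out of $\cC_{01}$ is an equivalence if and only if it is an equivalence on every object, and the same objects detect equivalences of functors out of $E_{01}$. Hence the full Beck--Chevalley map is an equivalence if and only if its restriction along $E_{01}\hookrightarrow\cC_{01}$ is, which by the previous paragraph happens if and only if the $E$-square Beck--Chevalley map is an equivalence.

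To prove the key lemma I would argue pointwise. Evaluating at an object $c$ (equivalently, an object of $E'$), both sides are limits of $Y$ (resp.\ $Y|_E$) over the comma \icats{} $c/f=\cC'_{c/}\times_{\cC'}\cC$ and $c/_Ef=E'_{c/}\times_{E'}E$, and the comparison map is induced by the evident inclusion $\iota\colon c/_Ef\hookrightarrow c/f$. It therefore suffices to show that $\iota$ is \emph{initial}, \ie{} that for every object $\xi=(d,\gamma\colon c\to f d)$ of $c/f$ the \icat{} $c/_Ef\times_{c/f}(c/f)_{/\xi}$ is weakly contractible. I would exhibit a terminal object: factor $\gamma\simeq m\circ e$ with $e\in E'$ and $m\in M'$ using the factorization system on $\cC'$, and then lift $m\colon\bar c\to f d$ to $\tilde m\colon\hat d\to d$ in $M$ using that $f\colon M\to M'$ is a right fibration (every object of $\cC$ lies in the wide subcategory $M$). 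The pair $(\hat d,e\colon c\to f\hat d)$ lies in $c/_Ef$ and $\tilde m$ defines a map to $\xi$; I expect this to be terminal.

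The main obstacle is precisely this terminality (equivalently, the initiality of $\iota$), and it is where both hypotheses are used in tandem. Given any competitor $(\tilde d',e',h)$, orthogonality of $(E',M')$ produces a unique diagonal comparing the $E'/M'$-factorizations of $\gamma$ and of $f(h)\circ e'$ downstairs, while the defining property of the right fibration $f\colon M\to M'$ --- that the induced slice functor $M_{/d}\to M'_{/f d}$ is an equivalence --- lets me lift this comparison \emph{uniquely} to the required map upstairs. Combining the essential uniqueness of factorizations with the rigidity of lifts in a right fibration should give both the existence and the uniqueness needed for terminality. Once the lemma is in hand, the rest of the argument is formal; the only points requiring care are the naturality of the pointwise equivalence in $c$ (so that it assembles into an equivalence of functors compatible with the Beck--Chevalley maps) and the bookkeeping that $g_0,g_1$ respect the left classes.
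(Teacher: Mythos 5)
Your proposal is correct, and its skeleton coincides with the paper's: your key lemma is precisely the paper's \cref{lemma:restriction-to-epis}, and your deduction of \cref{thm:base-change-general} from it (apply the lemma to both vertical maps, paste, and use that the wide inclusion $E_{01}\hookrightarrow\cC_{01}$ detects equivalences of natural transformations) is the same two-out-of-three argument the paper runs on its commutative cube. Where you genuinely diverge is in the proof of the key lemma itself. The paper first enlarges the universe and embeds $\mathcal{T}$ into a cocomplete \icat{} so that it can pass to total mates, and then verifies the mate $i_!f^*\to f^*j_!$ by a cube argument whose inputs are \cref{prop:bc-fact-core} and smooth base change along the right fibration $M\to M'$ (the dual of \cite{CisinskiBook}*{6.4.13}), concluding by conservativity of restriction to the wide subcategory $M$. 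You instead argue pointwise via the limit formula for right Kan extensions, reducing to initiality of $\iota\colon c/_Ef\hookrightarrow c/f$, which you establish by exhibiting a terminal object of each comma \icat{} $\iota_{/\xi}$: factor $\gamma\simeq m\circ e$ downstairs and lift $m$ through the right fibration (using that $d$ lies in the wide subcategory $M$). I checked that this terminal object does the job: after factoring a competitor's structure map $h_0\simeq\mu\circ\epsilon$ with $\epsilon\in E$, $\mu\in M$, and using contractibility of the space of $(E',M')$-factorizations of $\gamma$ together with essentially unique lifting of $M'$-morphisms with prescribed target, the relevant mapping space in $\iota_{/\xi}$ is identified with a homotopy fiber of an equivalence, hence is contractible. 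Your route buys something real: it needs only completeness of $\mathcal{T}$ (no universe enlargement or mate calculus) and uses neither \cref{prop:bc-fact-core} nor smooth base change. What it costs is the coherence bookkeeping in the terminality verification, which your sketch leaves implicit; two points you must nail down there are (i) that the mediating map you construct actually lies in $E$, as required for a morphism of $\iota_{/\xi}$ --- in the construction it is the composite of $\epsilon\in E$ with the lift of the comparison equivalence, which is itself invertible since right fibrations are conservative, and equivalences lie in $E$ --- and (ii) that contractibility is proved for the mapping space constrained to $E$, not for the space of all fillers in $\cC$, which is genuinely larger and not contractible in general.
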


We begin with the following key observation (which can also be seen as the special case of the theorem where $\cC_{00}=E_{00}=E_{10}$, $\cC_{01}=E_{01}=E_{11}$, and $g_0,g_1$ are the respective inclusions):

\begin{lemma}\label{lemma:restriction-to-epis}
  Let $f\colon (\cC,E,M)\to (\cC',E',M')$ be a map of small factorization
  systems such that $f\colon M\to M'$ is a right fibration. Then the
  Beck--Chevalley map $j^*f_*\to f_*i^*$ associated to
  \begin{equation*}
    \begin{tikzcd}
      E\arrow[r,hook,"i"]\arrow[d,"f"'] & \cC\arrow[d, "f"]\\
      E'\arrow[r,hook,"j"'] & \cC'
    \end{tikzcd}
  \end{equation*}
  is an equivalence for functors to any complete \icat{}.
\end{lemma}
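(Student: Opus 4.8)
The plan is to evaluate both sides pointwise and reduce the claim to a cofinality statement. Write $f_*$ for the relevant right Kan extensions, which exist since the target \icat{} is complete. Fix $e'\in E'$. By the pointwise formula for right Kan extension along $f\colon\cC\to\cC'$,
\[ (j^*f_*X)(e') \simeq \lim_{\cC'_{e'/}\times_{\cC'}\cC} X\circ\mathrm{pr}, \]
the limit over the comma \icat{} $\mathcal K_{e'} := \cC'_{e'/}\times_{\cC'}\cC$ whose objects are pairs $(c,\, e'\to f(c))$ with the arrow taken in $\cC'$; likewise, right Kan extension along the restricted $f\colon E\to E'$ gives
\[ (f_*i^*X)(e')\simeq \lim_{E'_{e'/}\times_{E'}E} X\circ i\circ\mathrm{pr}, \]
the limit over $\mathcal L_{e'} := E'_{e'/}\times_{E'}E$, whose objects are pairs $(e,\, e'\to f(e))$ with the arrow now in $E'$. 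Commutativity of the defining square identifies the Beck--Chevalley map at $e'$ with the canonical comparison $\lim_{\mathcal K_{e'}}\to\lim_{\mathcal L_{e'}}$ induced by the evident functor $u_{e'}\colon\mathcal L_{e'}\to\mathcal K_{e'}$ (include $E\hookrightarrow\cC$ on objects and $E'\hookrightarrow\cC'$ on structure arrows). It therefore suffices to show that each $u_{e'}$ is initial, i.e.\ limit-cofinal; then restriction along $u_{e'}$ preserves all limits and the map is an equivalence for every $X$.

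To prove initiality I would exhibit a right adjoint $w_{e'}\colon\mathcal K_{e'}\to\mathcal L_{e'}$ to $u_{e'}$, since a functor admitting a right adjoint is initial: for each $k$ the counit exhibits $w_{e'}(k)$ as a terminal object of the comma \icat{} $\mathcal L_{e'}\times_{\mathcal K_{e'}}(\mathcal K_{e'})_{/k}$, which is then weakly contractible, so initiality follows from the dual of \cite{CisinskiBook}*{6.1.13}. The functor $w_{e'}$ is the \emph{factor-and-lift} construction: given $(c,\,\alpha\colon e'\to f(c))\in\mathcal K_{e'}$, first take the $(E',M')$-factorization $e'\xrightarrow{\epsilon'} z'\xrightarrow{\mu'} f(c)$ in $\cC'$, and then use that $f\colon M\to M'$ is a right fibration to lift $\mu'\in M'$ to an essentially unique cartesian arrow $\tilde\mu\colon\tilde z\to c$ in $M$ with $f(\tilde z)=z'$; the pair $(\tilde z,\,\epsilon'\colon e'\to z'=f(\tilde z))$ lies in $\mathcal L_{e'}$ because $\epsilon'\in E'$, and $\tilde\mu$ supplies the counit $u_{e'}w_{e'}(c,\alpha)\to(c,\alpha)$.

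Checking the universal property amounts to showing that for every $(e,\,\beta\colon e'\to f(e))\in\mathcal L_{e'}$ and every $h\colon e\to c$ in $\cC$ with $f(h)\beta\simeq\alpha$, there is a contractible space of factorizations of $h$ through $\tilde\mu$ by a map $e\to\tilde z$ in $E$ compatible with $\epsilon'$ and $\beta$. I would produce this by taking the $(E,M)$-factorization $e\xrightarrow{d}w\xrightarrow{m}c$ of $h$ in $\cC$: applying $f$ yields the $(E',M')$-factorization $(f(d)\beta,\, f(m))$ of $\alpha$, which by essential uniqueness of orthogonal factorizations agrees with $(\epsilon',\mu')$; the two arrows $\tilde\mu, m\in M$ into $c$ then lie over identified arrows of $M'$, so essential uniqueness of cartesian lifts in the right fibration $f\colon M\to M'$ produces an equivalence $w\simeq\tilde z$ over $c$, and the composite $e\xrightarrow{d}w\simeq\tilde z$ is the required map. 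The main obstacle is exactly this step: assembling these pointwise, essentially unique choices into a genuine adjoint functor (equivalently, verifying contractibility of the comma \icats{}) coherently in the \icatl{} sense. Here the two hypotheses enter with a clean division of labour --- orthogonality of $(E,M)$ and $(E',M')$ forces uniqueness of the factorizations, while the right-fibration assumption on $f\colon M\to M'$ forces uniqueness of the lifts --- and it is their combination that yields the contractibility needed for cofinality.
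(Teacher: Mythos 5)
Your strategy is correct, and it is a genuinely different route from the paper's. The paper argues structurally: it first enlarges the universe and embeds the target into a complete \emph{and cocomplete} \icat{} so that it can pass to the total mate $i_!f^*\to f^*j_!$, then contemplates the cube obtained by adding the cores $\cC^{\simeq}$ and $(\cC')^{\simeq}$: the top and bottom faces are handled by \cref{prop:bc-fact-core}, the front face is a pullback along a right fibration (so its Beck--Chevalley map is invertible by the dual of \cite{CisinskiBook}*{6.4.13}, using that right fibrations are conservative), and the back face is then forced by $2$-categorical coherence together with conservativity of restriction along the wide subcategory $M\subset\cC$. You instead evaluate both Kan extensions pointwise and reduce to initiality of the comparison functor $u_{e'}\colon \mathcal{L}_{e'}\to\mathcal{K}_{e'}$ between comma \icats{}, which you propose to establish by a factor-and-lift right adjoint; all the identifications you make along the way (the pointwise limit formula, the identification of the Beck--Chevalley map with restriction along $u_{e'}$, ``left adjoints are initial'') are correct. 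Your route buys directness: no universe change, no passage to mates (you only ever need completeness of the target), and a transparent view of where each hypothesis enters. The paper's route buys exactly what you yourself flag as the ``main obstacle'': it never has to construct an adjoint between comma \icats{} coherently, outsourcing all coherence to previously established base-change statements.

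Concerning that obstacle: it is real, but fillable, and you do not need to build $w_{e'}$ as a functor at all. By the pointwise criterion for adjoints (equivalently, for initiality), it suffices to show that for each $k=(c,\alpha)$ the comma \icat{} $\mathcal{L}_{e'}\times_{\mathcal{K}_{e'}}(\mathcal{K}_{e'})_{/k}$ has a terminal object, namely your candidate $((\tilde z,\epsilon'),\tilde\mu)$. Terminality is a condition on mapping spaces, and it unwinds to contractibility of the fibres of the map
\[ \Map_E(e,\tilde z)\longrightarrow \Map_{\cC}(e,c)\times_{\Map_{\cC'}(e',f(c))}\Map_{E'}(e',z'),\qquad g\longmapsto (\tilde\mu\circ g,\, f(g)\circ\beta), \]
which follows from two standard contractibility statements combined exactly as in your existence-and-uniqueness sketch: the space of $(E,M)$-factorizations of a fixed map is contractible, and the space of cartesian lifts of a fixed arrow with fixed target along a right fibration is contractible. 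So your proof can be completed; be aware, though, that writing out this verification carefully is where essentially all of the work lies, and is comparable in effort to the paper's entire argument -- avoiding it is precisely the point of the paper's more formal proof.
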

\begin{proof}
Let $\cT$ be a complete \icat{}, which by changing universe we may assume is small. Then we may embed $\cT$ continuously into a complete and cocomplete \icat{} $\cT'$ via the Yoneda embedding. Proving the statement for $\cT'$ implies the statement for $\cT$, and so it suffices to prove the statement for (co)complete $\cT$. Having made this reduction, we may pass to total mates, and instead show that the
  Beck--Chevalley map $i_!f^*\to f^*j_!$ is an equivalence. For this, let
  us consider the commutative cube
  \begin{equation}\label{diag:inclusions-restr-to-E}
    \begin{tikzcd}[column sep=1.1em, row sep=1.1em]
      & E\arrow[dd, "f"{description, near start,yshift=1pt}]\arrow[rr,hook, "i"] && \cC\arrow[dd, "f"]\\
      \cC^{\simeq}\arrow[dd,"f"']\arrow[ur,hook]\arrow[rr,hook,crossing over,"\alpha"{near end,description}] && M\arrow[ur,hook]\\
      & E'\arrow[rr,hook, "j"{description, near start}] && \cC'\llap.\\
      (\cC')^{\simeq}\arrow[ur,hook] \arrow[rr,hook,"\beta"'] && \arrow[from=uu,crossing over, "f"{near start, description}] M'\arrow[ur,hook]
    \end{tikzcd}
  \end{equation}
  Taking functors into $\mathcal{T}$ and passing to left adjoints horizontally we obtain a commutative diagram
  \begin{equation*}
    \begin{tikzcd}[column sep=1.1em, row sep=1.1em]
      &[-.5em] \mathcal{T}^E\arrow[from=dd, "f^*"{description, near end}]\arrow[rr, "i_!"] &[-.33em]&[-.33em] \mathcal{T}^\cC\arrow[from=dd, "f^*"']\\
      \mathcal{T}^{\mskip.4mu\cC^{\simeq}}\arrow[from=dd,"f^*"]\arrow[from=ur]\arrow[rr,crossing over,"\alpha_!"{near end,description}] && \mathcal{T}^M\arrow[from=ur]\\
      & \mathcal{T}^{E'}\arrow[rr, "j_!"{description, near start}] && \mathcal{T}^{\cC'}\\
      \mathcal{T}^{\mskip.4mu(\cC')^{\simeq}}\arrow[from=ur] \arrow[rr,"\beta_!"'] && \arrow[uu,crossing over, "f^*"{near end, description}] \mathcal{T}^{M'}\arrow[from=ur]
    \end{tikzcd}
  \end{equation*}
  in the homotopy $2$-category of $\infty$-categories, where the left
  and right face are filled by the naturality equivalences and the
  remaining squares are filled by the respective Beck--Chevalley maps,
  which a priori might or might not be invertible; our goal is to
  prove that the natural transformation filling the back square is an
  equivalence.

  For this we observe that the Beck--Chevalley maps in the top and
  bottom face are invertible by
  Proposition~\ref{prop:bc-fact-core}. On the other hand, as
  $f\colon M\to M'$ is conservative, the front face of
  (\ref{diag:inclusions-restr-to-E}) is a pullback square, and as
  this restriction of
  $f$ is moreover assumed to be a right fibration, we see that the
  associated Beck--Chevalley map is an equivalence by the dual of \cite{CisinskiBook}*{6.4.13}.

  Altogether, we have shown that all squares except possibly the back
  square are filled with invertible transformations. By coherence, we
  conclude that $i_!f^*\to f^*j_!$ becomes an equivalence after
  restricting to $M\subset \cC$. However, $M$ is a wide subcategory,
  so restriction is conservative and the claim follows.
\end{proof}

\begin{proof}[Proof of Theorem~\ref{thm:base-change-general}]
  Consider the commutative cube
  \begin{equation*}
    \begin{tikzcd}[column sep=1.1em, row sep=1.1em]
      & \cC_{00}\arrow[dd, "f_0"{description, near start,yshift=1pt}]\arrow[rr, "g_0"] && \cC_{10}\arrow[dd, "f_1"]\\
      E_{00}\arrow[dd,"f_0"]\arrow[ur,hook]\arrow[rr,crossing over,"g_0"{near end,description}] && E_{10}\arrow[ur,hook]\\
      & \cC_{01}\arrow[rr,"g_1"{description, near start}] && \cC_{11}\rlap.\\
      E_{01}\arrow[ur,hook] \arrow[rr,"g_1"'] && \arrow[from=uu,crossing over, "f_1"{near start, description}] E_{11}\arrow[ur,hook]
    \end{tikzcd}
  \end{equation*}
  Mapping into $\mathcal{T}$ and passing to right adjoints vertically, we again obtain a coherent cube, where the top and bottom face are filled by the naturality equivalences and all other faces are filled via the appropriate Beck--Chevalley maps.

  By Lemma~\ref{lemma:restriction-to-epis}, the Beck--Chevalley maps filling the left and right face are invertible. Fixing now $X\colon \cC_{10}\to\mathcal{T}$, we conclude from coherence that the diagram of Beck--Chevalley maps
  \begin{equation*}
    \begin{tikzcd}
      (g_1^*f_{1*}X)|_{E_{01}}\arrow[r]\arrow[d,"\sim"'] & (f_{0*}g_0^*X)|_{E_{01}}\arrow[d,"\sim"]\\
      g_1^*f_{1*}(X|_{E_{10}})\arrow[r] & f_{0*}g_0^*(X|_{E_{10}})
    \end{tikzcd}
  \end{equation*}
  commutes up to homotopy. The claim follows via $2$-out-of-$3$, using again that we can test equivalences on a wide subcategory.
\end{proof}

Using this we can now prove the following localization criterion for maps of factorization systems:

\begin{thm}[`Separation of variables']\label{thm:sep-var}
	Let $f\colon (\cC,E,M)\to (\cC',E',M')$ be a map of factorization systems. Assume the following:
	\begin{enumerate}
		\item $f\colon E\to E'$ is a localization at some class $W\subset E$.
		\item $f\colon M\to M'$ is a right fibration.
	\end{enumerate}
	Then $f\colon \cC\to \cC'$ is  also a localization at $W$.
	\begin{proof}
		Passing to a larger universe, we may assume without loss of generality that all participating \icats{} are small.

		As $f\colon E\to E'$ is a localization, it is in particular essentially surjective, whence so is $f\colon \cC\to \cC'$. It is moreover clear that the latter inverts all maps in $W$. By \cite{CisinskiBook}*{7.1.11} it will therefore be enough to show that for every presentable $\mathcal{T}$ the restriction $f^*\colon\Fun(\cC',\mathcal{T})\to\Fun(\cC,\mathcal{T})$ is fully faithful and that its essential image contains all functors inverting $W$.

		For this we consider the two commutative diagrams
		\begin{equation}\label{diag:degenerate-squares}
			\begin{tikzcd}
				\cC\arrow[r,"f"]\arrow[d, "f"'] & \cC'\arrow[d,equal]\\
				\cC'\arrow[r,equal] & \cC'
			\end{tikzcd}
			\qquad\text{and}\qquad
			\begin{tikzcd}
				\cC\arrow[d,equal]\arrow[r,equal] & \cC\arrow[d, "f"]\\
				\cC\arrow[r, "f"'] & \cC'\rlap.
			\end{tikzcd}
		\end{equation}
		Full faithfulness of $f^*$ amounts to saying that the unit $\id\to f_{*}f^*$ is an equivalence. However, this is precisely the Beck--Chevalley map associated to the left-hand square, so the claim follows from full faithfulness of $(f|_E)^*$ via Theorem~\ref{thm:base-change-general}.

    On the other hand, since $(f|_E)^*$ is fully faithful, a $Y\in\Fun(E,\cT)$ is contained in its essential image (i.e.~inverts $W$) if and only if the counit $(f|_E)^*(f|_E)_*Y\to Y$ is invertible. Applying Theorem~\ref{thm:base-change-general} to the right hand square of (\ref{diag:degenerate-squares}) therefore shows that the counit $f^*f_*X\to X$ is an equivalence if and only if the counit of $X|_{E}$ is so, if and only if $X|_E$ inverts $W$. In particular, any $X\colon\cC\to\cT$ inverting $W$ is contained in the essential image of $f^*$, as claimed.
	\end{proof}
\end{thm}

\begin{proof}[Proof of Theorem~\ref{thm:spanloc}]
Given a span pair $(\cC,\cC_F)$, the \icat{} $\Span_F(\cC)$ admits a
canonical factorization system by \cite{HHLN2}*{4.9} such
that $E = \cC^{\op}$ and $M = \cC_F$. Moreover given any map of span
pairs $\phi\colon
(\cC,\cC_F)\to(\mathcal{D},\mathcal{D}_F)$, the induced map
$\Span(\phi)\colon \Span_F(\cC)\rightarrow \Span_F(\mathcal{D})$ is a map of
factorization systems, which agrees with $\phi^{\op}$ and $\phi_F$ when restricted to
the backward and forward morphisms, respectively. In particular we may immediately
apply the previous criterion.
\end{proof}

\subsection[Localization of $\Span(\xF)^\otimes$]{Localization of $\bm{\textsf{\textbf{Span}}(\xF\kern1pt)^{\otimes}}$}\label{sec:locbispan}
If we want to apply Theorem~\ref{thm:spanloc} to show that
\[
  \Bispan(\ev_0)\colon\Bispan_{\pb,\teq}(\Ar(\xF))\simeq\Span(\xF)^{\otimes}\to\Bispan(\xF)
\]
is a localization, we must first
show that $\Span_{\txt{pb}}(\Ar(\xF)) \to \Span(\xF)$ is a
localization. This is a special case of the following more general statement:
\begin{propn}\label{prop:Span_ev_0_loc}
  Let $(\cC, \cC_{F})$ be a span pair. Then
  $(\Ar(\cC), \Ar(\cC)_{\Fpb})$ is also a span pair,
  where $\Ar(\cC)_{\Fpb}$ is the subcategory whose morphisms
  are the pullback squares
  \[
    \begin{tikzcd}
      X' \arrow{r}{f'} \arrow{d} & Y' \arrow{d} \\
      X \arrow{r}{f} & Y
    \end{tikzcd}
  \]
  where $f$ (and hence also $f'$) lies in $\cC_{F}$. Moreover, $\ev_{0}$ is a morphism of span pairs, and the induced functor
  \[ \Span_{\Fpb}(\Ar(\cC)) \to \Span_{F}(\cC)\]
  is a localization.
\end{propn}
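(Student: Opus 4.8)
The plan is to obtain this as an instance of \cref{thm:spanloc}, applied to the evaluation functor $\ev_{0}$ viewed as a morphism of span pairs $(\Ar(\cC),\Ar(\cC)_{\Fpb})\to(\cC,\cC_{F})$. Accordingly I would proceed in three steps: first check that $(\Ar(\cC),\Ar(\cC)_{\Fpb})$ really is a span pair and that $\ev_{0}$ is a morphism of span pairs; then verify the two hypotheses of \cref{thm:spanloc}, namely that $\ev_{0}\colon\Ar(\cC)\to\cC$ is a localization and that it restricts to a right fibration on forward maps; and finally read off the localizing class of spans from the theorem.

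For the span pair structure, the key point is that $\Ar(\cC)=\Fun([1],\cC)$ has pullbacks computed pointwise. The forward maps are the pullback squares both of whose legs lie in $\cC_{F}$; these form a wide subcategory, closed under composition by the pasting lemma together with the closure of $\cC_{F}$ under composition. For base change, given a cospan in $\Ar(\cC)$ one of whose legs is forward, the two pointwise pullbacks exist by the span pair axioms for $(\cC,\cC_{F})$ applied at the source and target vertices, and they assemble into a pullback square in $\Ar(\cC)$ whose legs are again forward. Since limits in $\Ar(\cC)$ are pointwise, every evaluation functor preserves pullbacks and visibly sends forward maps into $\cC_{F}$, so it is a morphism of span pairs.

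The first hypothesis is formal: $\ev_{0}\colon\Ar(\cC)\to\cC$ admits a fully faithful adjoint, given by the degenerate-arrow (constant diagram) functor $X\mapsto\id_{X}$, and a functor possessing a fully faithful adjoint is a localization at the class of morphisms it inverts. The substantive hypothesis is the right fibration condition, which I would verify through the slice criterion: a functor is a right fibration exactly when it induces an equivalence on every slice \icat{}. Here the defining pullback rigidifies the forward maps. Concretely, for a fixed target arrow $w\colon w_{0}\to w_{1}$, a forward map into $w$ is a pullback square
\[
\begin{tikzcd}
P \arrow[r] \arrow[d] & w_{0} \arrow[d, "w"] \\
a \arrow[r, "\ell"'] & w_{1}
\end{tikzcd}
\]
with $\ell$ and the top arrow in $\cC_{F}$; since $w$ is fixed and the square is a pullback, the whole square is pinned down by the single leg $\ell\colon a\to w_{1}$. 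This identifies the slice of forward maps over $w$ with a slice of $\cC_{F}$, under which the evaluation induces equivalences on slices; hence it has groupoidal fibres and is a right fibration. With both hypotheses verified, \cref{thm:spanloc} shows that $\Span_{\Fpb}(\Ar(\cC))\to\Span_{F}(\cC)$ is a localization, at the spans $v\xfrom{w}u\xto{=}u$ with $w$ inverted by evaluation.

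The main obstacle is precisely this right fibration step, and it is here that defining the forward maps as \emph{pullback} squares (rather than arbitrary commuting squares with forward legs) is indispensable: it is the rigidity of a pullback square --- its being determined by a single leg once the target is fixed --- that produces the slice equivalence and hence the groupoidal fibres required for a right fibration. By comparison, the localization input and the span pair bookkeeping are routine.
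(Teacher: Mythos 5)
Your reduction to \cref{thm:spanloc} does not work, because the right fibration hypothesis of that theorem fails for $\ev_0$. Your slice computation is correct as far as it goes: a forward map into a fixed target $w=(w_0\to w_1)$ is indeed determined by its bottom leg $\ell\colon a\to w_1$, since the square must be a pullback. But this identifies the slice $(\Ar(\cC)_{\Fpb})_{/w}$ with $(\cC_F)_{/w_1}$, i.e.\ with a slice over $\ev_1(w)$, whereas a right fibration structure on $\ev_0$ would require the map $(\Ar(\cC)_{\Fpb})_{/w}\to (\cC_F)_{/w_0}$, given by sending a square to its \emph{top} leg, to be an equivalence. Under your identification this map becomes base change along $w$, namely $(\cC_F)_{/w_1}\to(\cC_F)_{/w_0}$, $\ell\mapsto \ell\times_{w_1}w_0$, which is not an equivalence in general; your argument has silently swapped $\ev_0$ for $\ev_1$ (it does show that $\ev_1$ restricts to a right fibration on forward maps, but that is a different functor from the one in the statement). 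Concretely, take $\cC=\xF$ and $w=(\emptyset\to *)$: for \emph{every} finite set $a$, the square with top $\id_{\emptyset}$, bottom the unique map $a\to *$, and right edge $w$ is a pullback, so $(\Ar(\xF)_{\pb})_{/w}\simeq \xF_{/*}\simeq \xF$, while $\xF_{/\ev_0(w)}=\xF_{/\emptyset}\simeq *$. In particular $\ev_0\colon \Ar(\cC)_{\Fpb}\to\cC_F$ is not a right fibration (its fibres are not even groupoids), so the substantive step of your proof collapses. Your verification of the span pair structure and of hypothesis (i) (the fully faithful left adjoint $X\mapsto \id_X$) is fine, but these were the routine parts.

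The paper instead proves this proposition by a direct argument that avoids \cref{thm:spanloc} entirely: the constant-arrow functor $c\colon\cC\to\Ar(\cC)$ is a map of span pairs with $\Span(\ev_0)\circ\Span(c)=\id$, and the counit $c\circ\ev_0\Rightarrow\id$ is a natural transformation \emph{of span pairs} (this is where one checks that certain cubes are pullbacks in $\Ar(\cC)$); by \cite{norms}*{C.20} (see also \cite{HHLN2}*{2.22}) it induces a transformation $\Span(c)\circ\Span(\ev_0)\Rightarrow\id$ whose components are spans inverted by $\Span(\ev_0)$, and this exhibits $\Span(\ev_0)$ as a localization. The theorem you invoked is reserved for the \emph{next} proposition, on bispan triples, where the relevant forward class $\Ar(\cC)_{\Lambda}$ consists of squares with invertible bottom edge; such squares are determined by their top edge, so there $\ev_0$ genuinely is a right fibration (being the inclusion of the cartesian edges of the cartesian fibration $\ev_0\colon\Ar(\cC_L)\to\cC_L$), and \cref{thm:spanloc} applies. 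If you want to salvage your strategy for the present proposition, you would need a localization criterion whose fibration hypothesis is compatible with $\ev_0$, not $\ev_1$; as it stands, the hypothesis you need is simply false.
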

\begin{proof}
	That $(\Ar(\cC), \Ar(\cC)_{\Fpb})$ is again a span pair, and that $\ev_0$ is a map of span pairs, is clear.

	Now consider the constant arrow functor $c \colon \cC \to \Ar(\cC)$, which is left adjoint to $\ev_0$, and note that it is also a map of span pairs. We claim that $\Span(c)$ provides an inverse to $\Span(\ev_0)$ after localizing $\Span_{\Fpb}(\Ar(\cC))$ at the collection maps $W_s$ of maps inverted by $\Span(\ev_0)$. First note that $\Span(\ev_{0}) \circ \Span(c)$ is already the identity before localization. Therefore it suffices to show that the other composite is also the identity after localizing at $W_s$. Consider the counit $\epsilon\colon c\circ \ev_0\Rightarrow \id$ of the adjunction $c\colon\cC\rightleftarrows\Ar(\cC)\noloc\ev_0$. We claim that it induces a natural transformation $\Span(c)\circ \Span(\ev_0)\Rightarrow \id$, which is pointwise given by maps in $W_s$, and hence induces an equivalence after localization. The first claim will follow from \cite{norms}*{C.20}, see also \cite{HHLN2}*{2.22}, after we show that $\eta$ is a natural transformation of span pairs. The second follows by observing that the component of $\Span(\eta)$ on an object $x\to y$ is given by
	\[\begin{tikzcd}
		x & x & x \\
		x & x & y
		\arrow[Rightarrow, no head, from=1-1, to=2-1]
		\arrow[Rightarrow, no head, from=1-2, to=1-3]
		\arrow[Rightarrow, no head, from=1-2, to=1-1]
		\arrow[Rightarrow, no head, from=2-2, to=2-1]
		\arrow[Rightarrow, no head, from=1-2, to=2-2]
		\arrow[from=1-3, to=2-3]
		\arrow[from=2-2, to=2-3]
	\end{tikzcd}\]
	and so is inverted by $\Span(\ev_0)$.

	To complete the proof it only remains to show that $\eta$ is a natural transformation of span pairs. Unwinding the definition, this amounts to the claim that for a commutative cube
	\[
	\begin{tikzcd}[row sep=small, column sep=small]
		x \arrow{rr}  & & x'  \\
		& x  &  & x' \arrow{dd} \\
		x  \arrow{rr}\arrow{dr}  & & x' \arrow{dr} \\
		& y \arrow{rr} \arrow[leftarrow, crossing over]{uu} &  & y'
		\arrow[Rightarrow, no head, from=1-1, to=2-2]
		\arrow[Rightarrow, no head, from=1-1, to=3-1]
		\arrow[Rightarrow, no head, from=1-3, to=2-4]
		\arrow[Rightarrow, no head, from=1-3, to=3-3]
		\arrow[crossing over, from=2-2, to=2-4]
	\end{tikzcd}
	\]
	such that the front face is a pullback in $\cC$, the cube is a
        pullback in $\Ar(\cC)$. Since limits in functor \icats{} are
        computed objectwise, this is true since the back face is
        evidently a pullback.
\end{proof}

Note that the above proposition is not an instance of the localization criterion devised in the previous subsection: the map $\ev_0\colon\Ar(\cC)_{\Fpb}\to\cC_F$ is typically not a right fibration (the target map $\ev_1$ would have been a right fibration, on the other hand). Instead, the next proposition will be Theorem~\ref{thm:spanloc}'s time to shine:

\begin{propn}
  Let $(\cC,\cC_{F}, \cC_{L})$ be a bispan
  triple. Then
  \[(\Ar(\cC), \Ar(\cC)_{\Fpb},
  \Ar(\cC)_{\Lambda})\] is also a bispan triple, where
  $\Ar(\cC)_{\Lambda}$ consists of the squares
  \[
    \begin{tikzcd}
      x \arrow{r}{l} \arrow{d} & y \arrow{d} \\
      x' \arrow{r}{\simeq} & y'
    \end{tikzcd}
  \]
  such that $l$ is in $\cC_{L}$ and the lower horizontal map is an
  equivalence. Moreover, $\ev_{0} \colon \Ar(\cC) \to \cC$ is a morphism of bispan triples, and the induced functor
  \[ \Bispan_{\Fpb,\Lambda}(\Ar(\cC)) \to \Bispan_{F,L}(\cC)\] is a
  localization.
\end{propn}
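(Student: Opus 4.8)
The plan is to reduce the bispan-triple axioms for $\Ar(\cC)$ to those already assumed for $\cC$, to check that $\ev_{0}$ is a morphism of bispan triples, and then to deduce the localization statement from \cref{thm:spanloc} using \cref{prop:Span_ev_0_loc}. The organizing principle is that $\ev_{0}\colon\Ar(\cC)=\Fun([1],\cC)\to\cC$ preserves all limits and that limits in $\Ar(\cC)$ are computed objectwise, together with the identification
\[ \Ar(\cC)^{\Lambda}_{/(y\to y')}\isoto\cC^{L}_{/y} \]
obtained by reading off the top map (the $\ev_{0}$-component): a $\Lambda$-map into $(y\to y')$ has an equivalence as its bottom map, so it is determined by its top map $w\to y$, which lies in $\cC_{L}$, and the analogous computation on morphisms shows this is an equivalence.

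First I would verify that $(\Ar(\cC),\Ar(\cC)_{\Fpb},\Ar(\cC)_{\Lambda})$ is a bispan triple. That $(\Ar(\cC),\Ar(\cC)_{\Fpb})$ is a span pair is \cref{prop:Span_ev_0_loc}; that $(\Ar(\cC),\Ar(\cC)_{\Lambda})$ is a span pair follows because the objectwise pullback of a $\Lambda$-square along any map is again a $\Lambda$-square (its bottom stays an equivalence, and its top is a pullback of a $\cC_{L}$-map, hence lies in $\cC_{L}$ as $(\cC,\cC_{L})$ is a span pair). Under the identification above, pullback $f^{*}$ along an $\Ar(\cC)_{\Fpb}$-map $f$ corresponds to pullback $\ev_{0}(f)^{*}$ along the $\cC_{F}$-map $\ev_{0}(f)$; hence the right adjoint $f_{*}$ exists and the mate condition holds, both transported from the bispan-triple structure on $\cC$ (using that $\ev_{0}$ sends the relevant pullback squares in $\Ar(\cC)$ to pullback squares in $\cC$). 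The same identifications show that $\ev_{0}$ is a morphism of bispan triples in the sense of \cref{def:bispanmor}: the final adjointability square has the equivalences $\Ar(\cC)^{\Lambda}_{/(y\to y')}\simeq\cC^{L}_{/y}$ as its vertical maps and matching pullback functors as its horizontals, so it is trivially right adjointable.

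For the localization statement I would apply \cref{thm:spanloc} to the morphism of span pairs
\[ (\Span_{\Fpb}(\Ar(\cC))^{\op},\Ar(\cC)_{\Lambda})\to(\Span_{F}(\cC)^{\op},\cC_{L}), \]
which is induced by $\ev_{0}$ (and is a morphism of span pairs precisely because $\ev_{0}$ is a morphism of bispan triples), and whose image under $\Span$ is exactly the functor $\Bispan_{\Fpb,\Lambda}(\Ar(\cC))\to\Bispan_{F,L}(\cC)$ in question. Its underlying functor $\Span_{\Fpb}(\Ar(\cC))^{\op}\to\Span_{F}(\cC)^{\op}$ is a localization, being the opposite of the localization of \cref{prop:Span_ev_0_loc} (localizations are stable under passing to opposites), so hypothesis (i) holds. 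The hard part, and hypothesis (ii), is to show that $\ev_{0}\colon\Ar(\cC)_{\Lambda}\to\cC_{L}$ is a right fibration. I would check this by recalling that $\ev_{0}\colon\Ar(\cC)\to\cC$ is a cartesian fibration with fiber $\cC_{c/}$ over $c$ and functoriality by precomposition, whose cartesian edges are precisely the squares whose bottom map (the $\ev_{1}$-component) is an equivalence. Every $\Lambda$-square is of this form, and the cartesian lift of a $\cC_{L}$-map with target a $\Lambda$-object again has top in $\cC_{L}$ and bottom an equivalence, hence lies in $\Ar(\cC)_{\Lambda}$; thus $\ev_{0}\colon\Ar(\cC)_{\Lambda}\to\cC_{L}$ is a cartesian fibration all of whose morphisms are cartesian, with groupoidal fibers $(\cC_{c/})^{\simeq}$, i.e.\ a right fibration. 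With both hypotheses verified, \cref{thm:spanloc} gives the desired localization.
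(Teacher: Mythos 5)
Your proposal is correct and takes essentially the same route as the paper: the same identification $\Ar(\cC)^{\Lambda}_{/f}\simeq \cC^{L}_{/\ev_{0}(f)}$ (natural in pullback) handles the bispan-triple axioms and the adjointability condition of \cref{def:bispanmor}, and the localization is deduced by applying \cref{thm:spanloc} to the same morphism of span pairs, with hypothesis (i) supplied by \cref{prop:Span_ev_0_loc}. Your hands-on check that $\ev_{0}\colon \Ar(\cC)_{\Lambda}\to \cC_{L}$ is a right fibration (cartesian lifts of $\cC_{L}$-maps stay in $\Ar(\cC)_{\Lambda}$, groupoid fibers) is just an unwinding of the paper's appeal to \cite{HTT}*{2.4.2.5}, which identifies $\Ar(\cC)_{\Lambda}$ with the subcategory of cartesian edges of a cartesian fibration over $\cC_{L}$.
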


\begin{proof}
  To show that $(\Ar(\cC), \Ar(\cC)_{\Fpb},\Ar(\cC)_{\Lambda})$ is a
  bispan triple, we first observe that condition (1) in
  Definition~\ref{def:bispantrip} is clear, as is the fact that $\ev_{0}$ gives a morphism of span pairs using both classes of morphisms in $\Ar(\cC)$.
  Now note that given a map
  $f\colon x\rightarrow x'$ in $\cC$, the \icat{}
  $\Ar(\cC)^{\Lambda}_{/f}$ consists of squares of the form
  \[
    \begin{tikzcd}
      y \ar[r, "l"] \ar[d] & x \ar[d, "f"] \\
      y' \ar[r, "\sim"] & x',
    \end{tikzcd}
  \]
  with $l$ in $\cC_{L}$, so that evaluation at $0$ gives an
  equivalence $\Ar(\cC)^{\Lambda}_{/f}\simeq \cC^L_{/x}$, which is moreover
  natural with respect to pullback. Therefore, conditions (2) and (3) in the
  definition follow immediately from the analogous facts for $\cC_F$
  and $\cC_L$. The adjointability condition for $\ev_{0}$ to be a morphism of bispan triples from Definition~\ref{def:bispanmor} also follows immediately.

  Next we recall that, by definition, the functor
  \[ \Bispan_{\Fpb,\Lambda}(\Ar(\cC)) \to \Bispan_{F,L}(\cC)\] is given by applying $\Span$ to the morphism of span pairs
  \[\Span(\ev_0) \colon (\Span_{\Fpb}(\Ar(\cC))^{\op}, \Ar(\cC)_{\Lambda}) \to
    (\Span_{F}(\cC)^{\op}, \cC_L).\] To complete the proof we will
  show that Theorem~\ref{thm:spanloc} applies to this functor. We saw in
  Proposition~\ref{prop:Span_ev_0_loc} that the functor $\Span(\ev_{0})$ is a
  localization, so we only need to show that the functor
  $\ev_0\colon \Ar(\cC)_\Lambda \rightarrow \cC_L$ is a right
  fibration. But here $\Ar(\cC)_\Lambda$ is precisely the subcategory
  of cartesian arrows for the cartesian fibration
  $\ev_{0} \colon \Ar(\cC_L)\rightarrow \cC_L$,
  so we have a right fibration by \cite{HTT}*{2.4.2.5}.
\end{proof}

As a special case, for the bispan triple $(\xF,\xF,\xF)$ we get the
desired localization result for bispans in $\xF$:
\begin{cor}\label{cor:spanFotloc}
  The functor
  \[ \Span(\xF)^{\otimes} \simeq
    \Bispan_{\pb,\teq}(\Ar(\xF)) \to \Bispan(\xF)\]
  given by evaluation at $0$ is a localization. \qed
\end{cor}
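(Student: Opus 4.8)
The plan is to deduce this immediately from the preceding proposition applied to the bispan triple $(\xF,\xF,\xF)$; the only genuine work is to match up the two pieces of notation. For this triple the target $\Bispan_{F,L}(\cC)$ of the proposition is by definition $\Bispan_{\xF,\xF}(\xF)=\Bispan(\xF)$, and the localization it produces is induced by $\ev_{0}\colon\Ar(\xF)\to\xF$, which is exactly the functor appearing in the statement. It therefore remains only to identify the source $\Bispan_{\Fpb,\Lambda}(\Ar(\xF))$ with $\Bispan_{\pb,\teq}(\Ar(\xF))\simeq\Span(\xF)^{\otimes}$ from \cref{cor:spanFotimes}.

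Identifying the forward class is routine: $\Ar(\xF)_{\Fpb}$ consists of the pullback squares whose bottom edge lies in $\cC_{F}=\xF$, and as this last condition is vacuous we get $\Ar(\xF)_{\Fpb}=\Ar(\xF)_{\pb}$, precisely the class used in \cref{cor:spanFotimes}.

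The one step that needs care --- and which I expect to be the main (if mild) obstacle --- is reconciling the two descriptions of the ``linear'' maps. In the proposition, $\Ar(\xF)_{\Lambda}$ is the class of squares whose $\ev_{1}$-edge is an equivalence (the $\cC_{L}=\xF$ condition on the opposite edge being vacuous), embedded into $\Span_{\pb}(\Ar(\xF))^{\op}$ via $\Ar(\xF)\simeq\Span_{\eq}(\Ar(\xF))^{\op}$; in \cref{cor:spanFotimes}, by contrast, the forward class $\teq$ is that of \emph{all} spans in $\Ar(\xF)$ whose image under $\ev_{1}$ is a span of equivalences. To see that these agree, I would use that a pullback square in $\xF$ whose $\ev_{1}$-edge is an equivalence is itself an equivalence in $\Ar(\xF)$, being a pullback of an equivalence. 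Hence in any $\teq$-span the forward (pullback) leg is automatically invertible, so the span is equivalent to a single backward map whose $\ev_{1}$-edge is invertible --- that is, to a map of $\Ar(\xF)_{\Lambda}$ regarded as a trivial span. This exhibits $\teq$ and $\Lambda$ as the same wide subcategory of $\Span_{\pb}(\Ar(\xF))^{\op}$.

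With these identifications in hand, the preceding proposition for $(\xF,\xF,\xF)$ says exactly that $\ev_{0}\colon\Span(\xF)^{\otimes}\simeq\Bispan_{\pb,\teq}(\Ar(\xF))\to\Bispan(\xF)$ is a localization, which is the claim.
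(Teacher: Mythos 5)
Your proposal is correct and follows exactly the paper's route: the paper likewise obtains \cref{cor:spanFotloc} as the immediate specialization of the preceding proposition to the bispan triple $(\xF,\xF,\xF)$, with the identifications $\Ar(\xF)_{\Fpb}=\Ar(\xF)_{\pb}$ and $\Lambda=\teq$ left implicit. Your explicit check that every $\teq$-span has invertible pullback leg (being a pullback over an equivalence) and hence coincides, up to repleteness, with $\Ar(\xF)_{\Lambda}$ is precisely the detail the paper's \qed{} suppresses.
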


\subsection{Proof of the main theorem}\label{sec:proof}
In this subsection we complete the proof of Theorem~\ref{mainthm}. Given Corollary\ref{cor:crigspanfot} and Corollary~\ref{cor:spanFotloc}, it remains to prove the following:
\begin{propn}\label{propn:bispanftreq}
  The following are equivalent for a functor $F\colon \Span(\xF)^{\otimes} \to \cC$:
\begin{enumerate}[(1)]
\item\label{it:c} $F$ is a $\Span(\xF)^\otimes$-monoid, and its restriction to
  $\Span(\xF)$ is a commutative monoid.
\item\label{it:l} $F$ factors through the localization $\Bispan(\xF)$, and the
  induced functor $\Bispan(\xF) \to \cC$ preserves products.
\end{enumerate}
\end{propn}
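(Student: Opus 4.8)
The plan is to leverage the fact, established in \cref{cor:spanFotloc}, that $L := \ev_{0}\colon \Span(\xF)^{\otimes} \to \Bispan(\xF)$ is a localization at a class $W$; by its universal property, a functor $F$ factors through $\Bispan(\xF)$ exactly when it inverts $W$. Thus condition~(2) decomposes as ``$F$ inverts $W$'' together with ``the descended functor $\bar F$ preserves products'', and I would prove the equivalence by matching these two clauses against the two clauses of condition~(1). The first task is to make $W$ explicit. Tracing through \cref{thm:spanloc} together with the explicit localizing transformation from the proof of \cref{prop:Span_ev_0_loc}, the class $W$ is generated by the morphisms $\mu_{h}\colon (x \xto{h} \bfn{n}) \to (x = x)$ of $\Span(\xF)^{\otimes} = \Bispan_{\pb,\teq}(\Ar(\xF))$ attached to a map $h$ of finite sets. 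Under the tuple description of the fibers (the remark following \cref{cor:spanFotimes}), if $x_{i} := h^{-1}(i)$ then $\mu_{h}$ compares the tuple $(x_{1},\dots,x_{n})$ over $\bfn{n}$ with the tuple $(\{a\})_{a \in x}$ of singletons indexed by $x = \coprod_{i} x_{i}$, and $L$ inverts each $\mu_{h}$ by construction.

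For the implication (1)$\Rightarrow$(2), assume $F$ is a $\Span(\xF)^{\otimes}$-monoid whose restriction to the fiber $\Span(\xF)$ over $\bfone$ is a commutative monoid. I would first compute $F(\mu_{h})$: using the monoid property to identify $F$ of each tuple with the product of the values of $F$ on its entries, $F(\mu_{h})$ becomes the product over $i$ of the Segal comparison maps $F(x_{i}) \to \prod_{a \in x_{i}} F(\bfone)$ for the decompositions $x_{i} \simeq \coprod_{a \in x_{i}} \bfone$ in $\Span(\xF)$. Each of these is an equivalence since $F|_{\Span(\xF)}$ is a commutative monoid, so $F$ inverts every $\mu_{h}$ and hence all of $W$; this produces the descended functor $\bar F$ with $F \simeq \bar F \circ L$. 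For the product-preservation of $\bar F$, I would use that the product of $S, T \in \Bispan(\xF)$ is the disjoint union $S \amalg T = L(S,T)$, and that the inert cocartesian edges $(S,T) \to S$, $(S,T) \to T$ of the fibration are carried by $L$ to the corresponding product projections; the monoid property then yields $\bar F(S \amalg T) = F(S,T) \isoto F(S) \times F(T) = \bar F(S) \times \bar F(T)$, and similarly for the nullary case.

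For the converse (2)$\Rightarrow$(1), write $F \simeq \bar F \circ L$ with $\bar F$ product-preserving. The monoid condition is a statement about inert cocartesian edges: for a tuple $(x_{1},\dots,x_{n})$ these edges project to the entries $x_{i}$, and $L$ carries them to the product projections $\coprod_{j} x_{j} \to x_{i}$ exhibiting $\coprod_{j} x_{j}$ as $\prod_{i} x_{i}$ in $\Bispan(\xF)$. Since $\bar F$ preserves products, $F(x_{1},\dots,x_{n}) = \bar F(\coprod_{j} x_{j}) \isoto \prod_{i} \bar F(x_{i}) = \prod_{i} F(x_{i})$, which is precisely the monoid condition. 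Finally, $F|_{\Span(\xF)}$ is the composite of $\bar F$ with the inclusion $\iota\colon \Span(\xF) \hookrightarrow \Bispan(\xF)$ of the additive morphisms, realized as the image of the fiber over $\bfone$ under $L$; as $\iota$ preserves products (both sides have products given by disjoint union) and $\bar F$ does too, $F|_{\Span(\xF)}$ is product-preserving, i.e.\ a commutative monoid.

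I expect the main obstacle to be the explicit identification of $W$ and the verification that $F(\mu_{h})$ disassembles as the claimed product of Segal comparison maps: this requires carefully unwinding how the symmetric monoidal structure on $\Span(\xF)^{\otimes}$, and in particular the cocartesian pushforwards along the summand inclusions, interact with the localizing maps coming from \cref{thm:spanloc}. The remaining compatibilities---that $L$ carries inert edges to product projections and that the additive inclusion $\iota$ is product-preserving---are routine once the tuple description of $\Bispan_{\pb,\teq}(\Ar(\xF))$ and the identification of products in $\Bispan(\xF)$ with disjoint unions are in hand.
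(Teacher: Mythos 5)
Your proposal is correct, and while it shares the paper's overall strategy (reduce via \cref{cor:spanFotloc}, then match the monoid conditions against inverted maps and products), it resolves the key step by a genuinely different mechanism. The paper never extracts an explicit generating class for the localization: it takes $W$ to be \emph{all} maps inverted by $\ev_0$ (the bispans in $\Ar(\xF)$ with invertible top row), uses \cref{lem:crigcond} to recast condition (1) as ``$F$ inverts the backward maps \cref{eq:bwdgen} and satisfies the fibrewise product condition'', and then must prove that inverting those backward maps forces inverting every map in $W$ --- including those with non-trivial forward (pullback-square) components --- which it does by an explicit composition and 2-out-of-3 trick. You instead trace the counit transformation from the proof of \cref{prop:Span_ev_0_loc} through \cref{thm:spanloc} to pin down the localization class as your generators $\mu_h$ (these are exactly the instances of \cref{eq:bwdgen} whose target is an identity arrow $(x=x)$), so that factoring through $\Bispan(\xF)$ requires inverting only those maps, and no saturation argument is needed. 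The trade-off is that your route depends on the \emph{proofs}, not just the statements, of \cref{thm:spanloc} and \cref{prop:Span_ev_0_loc}, since \cref{cor:spanFotloc} as stated does not record the localization class, whereas the paper gets by with the bare statement that $\ev_0$ is a localization (hence a localization at everything it inverts). Your flagged obstacle --- that $F(\mu_h)$ disassembles into a product of Segal comparison maps --- does go through: composing $\mu_h$ with the inert edge out of $(x=x)$ indexed by $a\in x$ agrees with composing the inert edge out of $(x\to\bfn{n})$ indexed by $i=h(a)$ with the fibrewise projection to $\{a\}$ (both composites are the bispan with top row $x \from \{a\} = \{a\}$), which is in substance the same computation the paper carries out inside \cref{lem:crigcond} via its commutative square and 2-out-of-3. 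The remaining ingredients --- products in $\Bispan(\xF)$ are disjoint unions, $\ev_0$ sends inert cocartesian edges to product projections, and the fiber over $\bfone$ maps to $\Bispan(\xF)$ as the product-preserving inclusion $\Span(\xF)\hookrightarrow\Bispan(\xF)$ --- are common to both arguments.
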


We first make condition \ref{it:c} above explicit using the equivalence $\Span(\xF)^{\otimes}\simeq \Bispan_{\pb,\teq}(\Ar(\xF))$ from
Corollary~\ref{cor:spanFotimes}:

\begin{lemma}\label{lem:crigcond}
  Let $\cC$ be a presentable \icat{}. For a functor
  \[
    F \colon \Bispan_{\pb,\teq}(\Ar(\xF)) \to \cC,
\]
the following conditions are equivalent:
\begin{enumerate}
\item\label{it:crig} The composite
\[
  \Span(\xF)^\otimes\simeq \Bispan_{\pb,\teq}(\Ar(\xF))\xrightarrow{F}\cC
\]
with the equivalence of Corollary~\ref{cor:spanFotimes} is a $\Span(\xF)^\otimes$-monoid, and its restriction to $\Span(\xF)$ is a commutative monoid.
\item\label{it:prod2} For all {\footnotesize $\smash{\vto{S}{T}}$} in $\Ar(\xF)$, we have equivalences
  \[ F\pvto{S}{T} \isoto \prod_{t \in T} F\pvto{S_{t}}{*}, \quad F\pvto{S}{*} \isoto \prod_{s \in S} F\pvto**,\]
  induced by the backwards maps associated to the obvious inclusions in $\Ar(\xF)$.
\item\label{it:loc} For all {\footnotesize $\smash{\vto{S}{T}}$} in $\Ar(\xF)$, we have equivalences
  \[ F\pvto{S}{*} \isoto F\pvto{S}{T}, \qquad F\pvto{S}{*} \isoto \prod_{s \in S} F\pvto**,\]
    induced by the obvious maps in $\Ar(\xF)$.
\item\label{it:loc2} $F$ takes all backward maps of the form
  \begin{equation}
    \label{eq:bwdgen}
    \begin{tikzcd}
      S \ar[r,equals] \ar[d] & S \ar[d] \\
      T & T' \ar[l]
    \end{tikzcd}
  \end{equation}
  to equivalences, and \[F\pvto{S}{*}\isoto \prod_{s \in S} F\pvto**.\]
\end{enumerate}
\end{lemma}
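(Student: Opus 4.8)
The plan is to prove the cycle of equivalences $\ref{it:crig}\Leftrightarrow\ref{it:prod2}\Leftrightarrow\ref{it:loc}\Leftrightarrow\ref{it:loc2}$, using throughout the identification $\Span(\xF)^\otimes\simeq\Bispan_{\pb,\teq}(\Ar(\xF))$ of \cref{cor:spanFotimes}, under which an object $(S\to T)$ lies over $T\in\Span(\xF)$ via $\ev_1$. I would start with $\ref{it:crig}\Leftrightarrow\ref{it:prod2}$ by unwinding the two clauses of \cref{defn:monoid}. For $t\in T$, the backward map $\rho_t\colon\{t\}\to T$ has cocartesian lift $\iota_t\colon(S\to T)\to(S_t\to*)$, where $S_t$ is the fibre over $t$, by the description of cocartesian edges in \cref{rk:span-F-otimes-cocart}; thus the $\Span(\xF)^\otimes$-monoid condition is exactly the first equivalence of \ref{it:prod2}, and one checks that the $\iota_t$ are the backward maps associated to the obvious inclusions $S_t\hookrightarrow S$. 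Since this condition is vacuous for objects over $\bfone$ (where $T=*$), it imposes nothing on the restriction $F|_{\Span(\xF)}$, which is the fibre over $\bfone$ spanned by the objects $S\to*$; the requirement that this restriction be a commutative monoid is precisely the second equivalence of \ref{it:prod2}.

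For $\ref{it:prod2}\Leftrightarrow\ref{it:loc}$ the two second equivalences coincide, so it suffices to compare the first ones in their presence. Writing $m_T\colon(S\to*)\to(S\to T)$ for the map inducing the first equivalence of \ref{it:loc}, the composite $\iota_t\circ m_T$ is the backward map $\beta_t\colon(S\to*)\to(S_t\to*)$ in $\Span(\xF)$ given by $S_t\hookrightarrow S$; this is a direct computation in the bispan category. Hence the composite $F(S\to*)\xrightarrow{F(m_T)}F(S\to T)\to\prod_t F(S_t\to*)$ has components $F(\beta_t)$, which together form the Segal map for the partition $S=\coprod_t S_t$ and are therefore an equivalence once $F|_{\Span(\xF)}$ is a commutative monoid (the common second condition). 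By $2$-out-of-$3$, $F(m_T)$ is an equivalence if and only if $F(S\to T)\to\prod_t F(S_t\to*)$ is, giving $\ref{it:prod2}\Leftrightarrow\ref{it:loc}$.

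Finally, for $\ref{it:loc}\Leftrightarrow\ref{it:loc2}$, the second conditions again agree, and $m_T$ is exactly the backward map of the form \cref{eq:bwdgen} with left-hand target $*$; so \ref{it:loc2} immediately implies the first clause of \ref{it:loc}. Conversely, a general backward map $m\colon(S\to T)\to(S\to T')$ of the form \cref{eq:bwdgen} satisfies $m\circ m_T\simeq m_{T'}$, so if $F$ inverts all such maps with target $*$ then it inverts $m$ as well by $2$-out-of-$3$.

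The main obstacle is the combination of the first step with the two composition identities $\iota_t\circ m_T\simeq\beta_t$ and $m\circ m_T\simeq m_{T'}$: verifying these requires carefully tracing the composition rule for morphisms of $\Bispan_{\pb,\teq}(\Ar(\xF))$ through the equivalence of \cref{cor:spanFotimes} and identifying the relevant cocartesian and backward edges. Once this bookkeeping is in place, the remaining arguments are formal manipulations with products and $2$-out-of-$3$.
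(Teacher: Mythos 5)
Your proof is correct and takes essentially the same route as the paper's: the same chain \ref{it:crig}$\Leftrightarrow$\ref{it:prod2}$\Leftrightarrow$\ref{it:loc}$\Leftrightarrow$\ref{it:loc2}, using \cref{rk:span-F-otimes-cocart} for the first equivalence, a $2$-out-of-$3$ argument against the common Segal condition for the second, and composition of backward maps of the form \eqref{eq:bwdgen} for the third. The composition identities you flag ($\iota_t\circ m_T\simeq\beta_t$ and $m\circ m_T\simeq m_{T'}$) do hold and are easy to verify, since all maps involved are backward maps, whose composition requires no pullbacks; the paper relies on them implicitly in the commutativity of its square and its final composition diagram.
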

\begin{proof}
  For the equivalence of \ref{it:crig} and
  \ref{it:prod2} observe that the first condition of \ref{it:prod2} is by Remark~\ref{rk:span-F-otimes-cocart} equivalent to $F$ being a $ \Bispan_{\pb,\teq}(\Ar(\xF))$-monoid, while the usual description of products in $\Span(\xF)$ (as coproducts in $\xF$) shows that the second condition is equivalent to the restriction to $\Span(\xF)$ being a commutative monoid.

  To see that \ref{it:prod2} and \ref{it:loc} are equivalent, consider
  the following commutative square:
  \[
    \begin{tikzcd}
      F\pvto{S}{*} \ar[r, "\text{\ref{it:loc}}"] \ar[d, "\sim"] & F\pvto{S}{T} \ar[r,
      "\text{\ref{it:prod2}}"] & \displaystyle\prod_{t \in T} F\pvto{S_{t}}{*} \ar[d,
      "\sim"] \\
      \displaystyle\prod_{s \in S} F\pvto** \ar[rr, "\sim"] & & \displaystyle\prod_{t \in T}
      \prod_{s \in S_{t}} F\pvto**.
    \end{tikzcd}
  \]
  Here the first top horizontal map is an equivalence if \ref{it:loc} holds
  and the second if \ref{it:prod2} holds,
  while the other maps are invertible under both assumptions; the
  equivalence of the two conditions then follows from the 2-of-3
  property.

  Finally, we observe that \ref{it:loc} is a special case of
  \ref{it:loc2}, and conversely \ref{it:loc2} follows from
  \ref{it:loc} by applying the 2-of-3 property to the value of $F$ at
  the composition
  \[
    \begin{tikzcd}
      S \ar[r,equals] \ar[d] & S \ar[d] \ar[r, equals] & S \ar[d] \\
      * & T \ar[l] & T' \ar[l],
    \end{tikzcd}
  \]
  which completes the proof.
\end{proof}

\begin{proof}[Proof of Proposition~\ref{propn:bispanftreq}]
  Let $S$ be the class of maps in $\Bispan_{\pb,\teq}(\Ar(\xF))$ of
  the form (\ref{eq:bwdgen}) and let $W$ be the class of maps that are
  inverted by $\ev_{0}$. Since a span is invertible \IFF{} both of its
  components are (Observation~\ref{obs:spaneq}), the maps in $W$ are those whose
  top row consists of equivalences, which can immediately be
  simplified to those of the form
  \[
    \begin{tikzcd}
      X \arrow{d} & X \arrow[equals]{l} \arrow[equals]{r} \arrow{d} \drpullback & X \arrow{d} \\
      S & T \arrow{l} \arrow{r} & S'.
    \end{tikzcd}
  \]
  Using the characterization from Lemma~\ref{lem:crigcond}\ref{it:loc2} and
  the description of products in $\Bispan(\xF)$ from \cite{bispans}*{Remark~2.6.13} we see that \ref{it:c} follows from \ref{it:l},
  since $S$ is contained in $W$. For the converse,
  Corollary~\ref{cor:spanFotloc} implies that it suffices to show that a
  functor that inverts the maps in $S$ must invert all maps in $W$. A
  map in $W$ is a composite of a map in $S$ and a forward map of the
  form
  \begin{equation}\label{diag:remains-to-be-inverted}
    \begin{tikzcd}
      X \arrow[equals]{r} \arrow{d} \drpullback & X \arrow{d}{f} \\
      T \arrow{r}[swap]{g} & S,
    \end{tikzcd}
  \end{equation}
  so it suffices to show that such maps are inverted. For this we
  consider the composition
    \[
    \begin{tikzcd}
       X \arrow[equals]{r} \arrow{d} \drpullback & X \arrow{d}{f} & X \arrow[equals]{l} \arrow[equals]{d} \\
       T \arrow{r}[swap]{g} & S & X \arrow{l}{f}
    \end{tikzcd}
  \]
  where the second map lies in $S$. Note that this cospan is a composition of a forward map followed by a backwards map, and by the composition law in $\Span(\Ar(\xF))$ rewriting this as a single span amounts to computing the pullback. The left square being cartesian then implies that the composite is simply
  \[
    \begin{tikzcd}
      X \arrow{d} & X \arrow[equals]{l} \arrow[equals]{r} \arrow[equals]{d} \drpullback & X \arrow[equals]{d} \\
      T & X \arrow{l} \arrow[equals]{r} & X,
    \end{tikzcd}
  \]
  which lies in $S$. The 2-of-3 property then implies that a functor
  that inverts $S$ must invert all maps in $W$, as required.
\end{proof}

\begin{bibdiv}
  \begin{biblist}
  \bibselect{refs_crigbispan}
  \end{biblist}
\end{bibdiv}
\end{document}